\newtheorem{thm}{Theorem}[section]
\newtheorem{lem}[thm]{Lemma}
\newtheorem{prop}[thm]{Proposition}
\newtheorem{cor}[thm]{Corollary}
\theoremstyle{definition} 
\newtheorem{deftn}[thm]{Definition}
\newtheorem{prob}[thm]{Problem} 
\theoremstyle{remark} 
\newtheorem{rmk}[thm]{Remark}
\numberwithin{equation}{section} 
\newcommand{\re}{\mathbb{R}}\newcommand{\q}{\mathbb{Q}}
\newcommand{\co}{\mathbb{C}}\newcommand{\z}{\mathbb{Z}}
\newcommand{\al}{\alpha}\newcommand{\be}{\beta}\newcommand{\ga}{\gamma}\newcommand{\de}{\delta}\newcommand{\ep}{\epsilon}\newcommand{\si}{\sigma}
\newcommand{\vp}{\varphi}\newcommand{\De}{\Delta}\newcommand{\la}{\lambda}
\newcommand{\Ga}{\Gamma}
\newcommand{\what}{\widehat}\newcommand{\wtil}{\widetilde}\newcommand{\ta}{\theta}\newcommand{\om}{\omega}
\newcommand{\cd}{\cdots}\newcommand{\ld}{\ldots}
\newcommand{\sbs}{\subset}\newcommand{\bs}{\backslash}\newcommand{\pa}{\partial}\newcommand{\bbm}{\mathbbm}
\newcommand{\xra}{\xrightarrow}
\newcommand{\car}{\curvearrowright}
\newcommand{\ra}{\rightarrow}
\newcommand{\hra}{\hookrightarrow}
\newcommand{\bb}[1]{\mathbb{#1}}\newcommand{\ca}[1]{\mathcal{#1}}\newcommand{\un}[1]{\underline{#1}}\newcommand{\ov}[1]{\overline{#1}}\newcommand{\mf}{\mathfrak}
\newcommand{\fr}[2]{\frac{#1}{#2}}
\newcommand{\ot}{\otimes}
\newcommand{\lan}{\langle}\newcommand{\ran}{\rangle}
\newcommand{\op}{\oplus}
\newcommand{\ti}{\times}
\newcommand{\aut}{\text{Aut}}
\newcommand{\rest}[2]{#1\bigr\vert_{#2}}
\newcommand{\ad}{\text{ad}}
\newcommand{\hy}{\bb H}\newcommand{\Sl}{\text{SL}}
\newcommand{\SO}{\text{SO}}\newcommand{\psl}{\text{PSL}}
\newcommand{\isom}{\text{Isom}}
\newcommand{\homeo}{\text{Homeo}}
\newcommand{\diff}{\text{Diff}}\newcommand{\SU}{\text{SU}}
\newcommand{\SP}{\text{SP}}
\newcommand{\Mod}{\text{Mod}}
\newcommand{\eu}{\text{eu}}
\newcommand{\Top}{\text{Top}}
\newcommand{\ts}{\bb T}
\newcommand{\Hom}{\text{Hom}}\newcommand{\End}{\text{End}}
\newcommand{\gl}{\mbox{GL}}
\begin{document}

\title{Cohomological obstructions to Nielsen realization}

\author{Bena Tshishiku}
\address{Department of Mathematics, University of Chicago, Chicago, IL 60615} \email{tshishikub@math.uchicago.edu}


\date{November 15, 2013}

\keywords{Algebraic topology, differential geometry, characteristic classes, manifold bundles, mapping class groups}

\begin{abstract}
For a based manifold $(M,*)$, the question of whether the surjective homomorphism $\diff(M,*)\ra\pi_0\diff(M,*)$ admits a section is an example of a Nielsen realization problem. This question is related to a question about flat connections on $M$-bundles and is meaningful for $M$ of any dimension. In dimension 2, Bestvina-Church-Souto \cite{bcs} showed a section does not exist when $M$ is closed and has genus $g\ge2$. Their techniques are cohomological and certain aspects are specific to surfaces. We give new cohomological techniques to generalize their result to many locally symmetric manifolds. The main tools include Chern-Weil theory, Milnor-Wood inequalities, and Margulis superrigidity. 
\end{abstract}

\maketitle


\section{Introduction}\label{sec:intro}

Let $M$ be a manifold with basepoint $*\in M$, and let $\diff(M,*)$ denote the group of $C^1$-diffeomorphisms of $M$ that fix $*$. Denote by $\diff(M,*)\ra\pi_0\diff(M,*)$ the natural surjection that sends a diffeomorphism to its isotopy class. The central object in this paper is the \emph{point-pushing homomorphism}
\[\text{Push}: \pi_1(M,*)\ra\pi_0\diff(M,*),\] 
where Push$([\ga])$ is the isotopy class that ``pushes $*$ along $\ga$" (see Section \ref{sec:pushing}).

\vspace{.1in} 

\begin{prob}\label{prob}
Does the projection $\diff(M,*)\ra\pi_0\diff(M,*)$ admit a \emph{section} over the point-pushing homomorphism? In other words, does there exist a homomorphism $\vp:\pi_1(M,*)\ra\diff(M,*)$ so that the following diagram commutes? 
\begin{equation}\label{diag:q1}
\begin{gathered}
\begin{xy}
(-10,-15)*+{\pi_1(M,*)}="A";
(30,0)*+{\diff(M,*)}="B";
(30,-15)*+{\pi_0\diff(M,*)}="C";
{\ar@{-->}"A";"B"}?*!/_3mm/{\vp};
{\ar "B";"C"}?*!/_3mm/{};
{\ar "A";"C"}?*!/^3mm/{\text{Push}};
\end{xy}
\end{gathered}
\end{equation}
\end{prob}
\noindent If $\vp$ exists we say Push is \emph{realized by diffeomorphisms}. 
\vspace{.1in}

Problem \ref{prob} is an instance of the general \emph{Nielsen realization problem}, which asks if a group homomorphism $\Lambda\ra\pi_0\diff(M)$ admits a section $\Lambda\ra\diff(M)$. For example, let $E\ra B$ be an $M$-bundle with associated monodromy representation 
\[\mu:\pi_1(B)\ra\pi_0\diff(M).\]
If $\mu$ is not realized by diffeomorphisms, then $E\ra B$ does not admit a \emph{flat connection}. When $E\ra B$ has a section, $\mu$ factors through $\pi_0\diff(M,*)$, and if $\mu$ is not realized by diffeomorphisms, then $E\ra B$ does not admit a flat connection for which the section is parallel (see Sections \ref{sec:pushing} and \ref{sec:flat}). 

The Nielsen realization problem has a long history, with both positive and negative results. For example, when $M$ is a surface see Kerckhoff \cite{Kerckhoff}, Morita \cite{morita_nonlifting}, Markovic \cite{markovic}, and Franks-Handel \cite{frankshandel}; for examples in higher dimensions see Block-Weinberger \cite{bw} and Giansiracusa \cite{giansiracusa}. 

This paper was inspired by the paper \cite{bcs} of Bestvina-Church-Souto, which shows that Push is not realized by diffeomorphisms when $M$ is a closed surface of genus $g\ge2$. These surfaces are simple examples of (finite volume) locally symmetric manifolds $M=\Ga\bs G/K$, where $G$ is a semisimple real Lie group of noncompact type, $K\sbs G$ is a maximal compact subgroup, and $\Ga\sbs G$ is a torsion-free lattice. We will study Problem \ref{prob} for locally symmetric manifolds and generalize the result in \cite{bcs} for surfaces to large classes of locally symmetric manifolds. 
We will focus on the case when $\Ga\sbs G$ is irreducible. To address Problem \ref{prob}, we use the following basic dichotomy of locally symmetric manifolds. 

\vspace{.1in}

\noindent{\bf Type 1.} {\it  $M$ has at least one nonzero Pontryagin class.} 

\vspace{.1in}

\noindent{\bf Type 2.} {\it  $M$ has trivial Pontryagin classes.} 

\vspace{.1in}

Being Type (1) or Type (2) depends only on the group $G$ when $M=\Ga\bs G/K$ is compact; this is a consequence of the Proportionality Principle (see \cite{kt_flat}, Section 4.14). Following Borel-Hirzebruch \cite{bh}, the author has determined precisely which $G$ yield Type (1) manifolds (this computation is completed in \cite{tshishiku2}). The following table shows which $M$ are Type (1) or (2) when $G$ is simple. For $G$ a product of simple groups, $M$ is Type (1) if and only if at least one factor of $G$ is Type (1).

\vspace{.05in}
\begin{center}
\begin{tabular}{ll|lll}
Type 1&&&Type 2\\[1mm]
\hline &&\\ [-1.5ex]
$\SU(p,q)$&$p,q\ge1$ and $p+q\ge2$&& $\Sl(n,\re)$ &for $n\ge2$\\[1mm]
$\SP(2n,\re)$& $n\ge2$&&$\SO(n,1)$& for $n\ge2$\\[1mm]
$\SO(p,q)$ & $p,q\ge2$ and $(p,q)\neq (2,2)$ or $(3,3)$&&$\SU^*(2n)$&$n\ge2$\\[1mm]
$\SP(p,q)$ & $p,q\ge1$&&$E_{6(-26)}$\\[1mm]
$\SO^*(2n)$&$n\ge3$&&$\Sl(n,\co)$ &for $n\ge2$\\[1mm]
$G_{2(2)}$&& & $\SO(n,\co)$ &for $n\ge2$\\[1mm]
$F_{4(4)}$&&&$\SP(2n,\co)$ &for $n\ge2$\\[1mm]
$F_{4(-20)}$&&&$G_2(\co)$&\\[1mm]
$E_{6(6)}$&&&$F_4(\co)$\\[1mm]
$E_{6(2)}$&&&$E_6(\co)$&\\[1mm]
$E_{6(-14)}$&&&$E_7(\co)$&\\[1mm]
$E_{7(7)}$&&&$E_8(\co)$&\\[1mm]
$E_{7(-5)}$&\\[1mm]
$E_{7(-25)}$&\\[1mm]
$E_{8(8)}$&\\[1mm]
\end{tabular}
\end{center}

\vspace{.05in}

We further divide Type (2) examples according to the real rank of $G$ and the representation theory of $\Ga$.

\vspace{.1in}

\noindent{\bf Type 2a.} {\it $M=\Ga\bs G/K$ has trivial Pontryagin classes, $\re$-\emph{rank}$(G)\ge2$, and every finite dimensional unitary representation of $\Ga$ is virtually trivial.}

\vspace{.1in}

\noindent{\bf Type 2b.} {\it $M=\Ga\bs G/K$ has trivial Pontryagin classes, and if $\re$-\emph{rank}$(G)\ge2$, then $\Ga$ has a unitary representation $\Ga\ra U(n)$ with infinite image.} 

\vspace{.1in} 

Most Type (2) examples are Type (2a). For example, if $M$ is Type (2) and $\q$-rank$(\Ga)\ge1$, then $M$ is Type (2a); see \cite{witte-morris} Theorem 13.3. There are also many Type (2a) $M$ with $\q$-rank$(\Ga)=0$; see Ch.\ 15 in \cite{witte-morris} and the discussion in Section \ref{sec:res} below. 

\subsection{Results} The following theorems are the main results of this paper. 

\begin{thm}\label{thm:pont}
Let $M$ be a Riemannian manifold with nonpositive curvature. Assume that some Pontryagin class of $M$ is nontrivial. Then \emph{Push} is not realized by diffeomorphisms. In particular, if $M=\Ga\bs G/K$ is an irreducible locally symmetric manifold of Type $(1)$, then \emph{Push} is not realized by diffeomorphisms. 
\end{thm}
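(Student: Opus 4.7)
The plan is to assume toward contradiction that Push is realized by a homomorphism $\varphi: \pi_1(M,*) \to \diff(M,*)$, and to produce from it a rank-$(\dim M)$ flat vector bundle on $M$ that is topologically isomorphic to $TM$; Chern-Weil theory then forces the rational Pontryagin classes of this bundle to vanish, contradicting the hypothesis. Form the Borel construction $E := \wtil M \times_{\pi_1(M,*)} M$, where $\pi_1(M,*)$ acts on $\wtil M$ by deck transformations and on $M$ via $\varphi$. The projection $\pi: E \to M$ is a smooth $M$-bundle carrying a canonical flat Ehresmann connection whose horizontal subbundle consists of the $\wtil M$-directions in the cover, and the basepoint section $s:[\tilde x] \mapsto [(\tilde x, *)]$ is well-defined (since $\varphi(\gamma)$ fixes $*$) and parallel. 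Because $\diff(M,*)$ acts on $T_*M$ by linear maps via differentiation at $*$, the flat structure on $E$ induces a flat linear connection on the vertical tangent bundle $T_{E/M}$, and pullback along the parallel section equips the normal bundle $\nu(s) := s^* T_{E/M}$ with a flat linear connection; by Chern-Weil, the rational Pontryagin classes of any flat real vector bundle vanish, so $p_i(\nu(s)) = 0$ in $H^*(M;\q)$ for all $i \geq 1$.

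The crucial and hardest step is to identify $\nu(s) \cong TM$ as topological vector bundles. By Cartan-Hadamard, nonpositive curvature makes $\wtil M$ contractible and $M$ aspherical; the long exact sequence for the fibration $M \to E \to M$ then shows that $E$ is aspherical and that $\pi_1(E)$ is an extension of $\pi_1(M)$ by $\pi_1(M)$ classified on $\pi_1$ by Push. Since $\text{Push}(\gamma)$ acts on $\pi_1(M,*)$ by conjugation by $\gamma$, this extension has inner action, so $\pi_1(E) \cong \pi_1(M) \rtimes_{\text{inn}} \pi_1(M) \cong \pi_1(M) \times \pi_1(M)$; a direct computation identifies $s_*$ with the diagonal embedding under this isomorphism. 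Because $E$ and $M \times M$ are aspherical with isomorphic fundamental groups, they are homotopy equivalent, and one arranges the equivalence $f:E \to M\times M$ to lie over $\id_M$ with $f\circ s \simeq \Delta$. Under $f$, the vertical tangent microbundle of $\pi:E\to M$ corresponds to the vertical tangent microbundle $p_2^* TM$ of $p_1:M\times M \to M$; by Kister's theorem on uniqueness of $\re^n$-bundles inside microbundles, this gives $T_{E/M} \cong f^*(p_2^* TM)$ topologically, so $\nu(s) \cong \Delta^*(p_2^* TM) = TM$.

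Combining the two steps gives $p_i(TM) = p_i(\nu(s)) = 0$ for all $i \geq 1$, contradicting the hypothesis. The ``In particular'' statement is immediate: any irreducible locally symmetric manifold $M=\Ga\bs G/K$ is nonpositively curved, and by definition Type $(1)$ means that some Pontryagin class is nonzero. The principal obstacle is the topological identification $\nu(s) \cong TM$; the microbundle argument above is one route, but one could alternatively invoke the Borel conjecture (known for closed nonpositively curved manifolds by Farrell-Jones) to upgrade $f$ to a homeomorphism, after which Novikov's topological invariance of rational Pontryagin classes closes the argument.
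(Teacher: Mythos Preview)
Your overall strategy is correct and matches the paper's: assuming Push is realized, produce from the derivative of $\varphi$ at $*$ a flat $\gl_n(\re)$-bundle whose Pontryagin classes vanish by Chern--Weil, and show it has the same Pontryagin classes as $TM$. Your flat bundle $\nu(s)=s^*T_{E/M}$ is exactly the bundle the paper calls $E_0$ (the vector bundle associated to $\rho_0:\Ga\to\gl_n(\re)$), and the identification of $\pi_1(E)\cong\pi_1(M)\times\pi_1(M)$ with $s_*$ going to the diagonal is correct.

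The gap is in the ``crucial and hardest step.'' Your microbundle claim---that under the homotopy equivalence $f:E\to M\times M$ the vertical tangent microbundle of $\pi$ corresponds to $p_2^*TM$---is not justified and is in general false. A fiber homotopy equivalence over $M$ does induce a map of vertical tangent microbundles, but a \emph{map} of microbundles is not an \emph{isomorphism}: that requires a germ of homeomorphism near the zero section, and homotopy equivalences do not preserve tangent microbundles (there exist homotopy-equivalent closed manifolds with different rational Pontryagin classes). Kister's theorem only lets you pass between microbundles and $\re^n$-bundles; it does not make microbundles homotopy-invariant. Your alternative via the Borel conjecture (Farrell--Jones) does close the argument, but it needs $M$ closed---which the theorem does not assume, and Type~(1) includes noncompact finite-volume examples---and it invokes machinery far heavier than anything else in the proof. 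The paper in fact remarks that the analogous step in \cite{bcs} used Earle--Eells to identify the two $M$-bundles, and that this fails for $\dim M>2$; your approach runs into the same obstacle.

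The paper circumvents this entirely by a direct geometric construction. Lift $\varphi$ to an action of $\Ga$ on $\wtil M$ fixing $\wtil *$; this action extends to the visual boundary $\partial\wtil M\simeq S^{n-1}$, where it coincides with the deck-group action (Lemma~\ref{lem:boundary}), whose associated sphere bundle is $T^1M$ (Lemma~\ref{lem:tang}). Blowing up $\wtil M\cup\partial\wtil M\simeq\bb D^n$ at the fixed point (Lemma~\ref{lem:compactify}) gives a $\Ga$-action on $S^{n-1}\times[0,1]$ interpolating between the derivative action $\rho_0$ and the boundary action $\rho_1$. This produces an explicit \emph{fiberwise bordism} between the sphere bundle of your $\nu(s)$ and $T^1M$, and Section~\ref{sec:bordant} shows (via the Alexander trick and a stabilization argument, using only Kirby--Siebenmann) that fiberwise bordant $S^{n-1}$-bundles have equal Pontryagin classes. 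No compactness of $M$ and no rigidity theorems are needed.
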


\begin{thm}\label{thm:main}
Let $G$ be a semisimple real Lie group with no compact factors. Let $K\sbs G$ be a maximal compact subgroup and let $\Ga\sbs G$ be an irreducible lattice. 
If $M=\Ga\bs G/K$ has Type $(2\emph{a})$ then \emph{Push} is not realized by diffeomorphisms. 
\end{thm}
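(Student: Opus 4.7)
The plan is to post-compose the hypothetical section $\vp\colon\Ga\to\diff(M,*)$ (with $\Ga=\pi_1(M,*)$) with the derivative at $*$, obtaining a finite-dimensional linear representation
\[\rho := D_*\vp\colon\Ga\longrightarrow \gl(T_*M)\cong \gl(d,\re),\qquad d=\dim M.\]
The argument then plays Margulis superrigidity off against the Type (2a) hypotheses. Since $\Ga\sbs G$ is an irreducible lattice and $\re$-rank$(G)\ge 2$, Margulis forces $\rho$ into one of two alternatives: either (A) $\rho(\Ga)$ is precompact in $\gl(d,\re)$, or (B) some finite-index restriction of $\rho$ extends to an algebraic representation $\wtil\rho\colon G\to \gl(d,\re)$. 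In case (A), averaging produces a $\rho(\Ga)$-invariant inner product on $T_*M$ turning $\rho$ into a finite-dimensional unitary representation; the Type (2a) hypothesis then implies $\rho$ is trivial on some finite-index subgroup $\Ga_0\sbs\Ga$, and passing to the corresponding finite cover $M_0$ of $M$ we may assume $\rho$ itself is trivial. Ruling out (B) will require a separate argument exploiting the specific list of Type (2) groups.

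Once the derivative representation is trivial, convert $\vp$ back into geometry. The Borel construction for the $\Ga$-action on $M$ defined by $\vp$ yields the flat $M$-bundle
\[E=\wtil M\ti_\Ga M\longrightarrow M,\]
equipped with the canonical section $s(x)=[\wtil x,*]$ (well-defined because $\vp(\Ga)$ fixes $*$). Pulling back the vertical tangent bundle along $s$ gives the flat rank-$d$ vector bundle
\[V := s^*T_vE \;\cong\; \wtil M\ti_\rho T_*M \longrightarrow M,\]
whose real Pontryagin classes vanish by Chern-Weil (already consistent with Type (2a), so Pontryagin classes alone cannot furnish the contradiction). The strategy is now to compare $V$ with $TM$. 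A flat bundle whose holonomy is trivial on a finite cover is itself trivial on that cover, so $V|_{M_0}$ is trivial as a topological vector bundle; one then identifies the rational characteristic classes of $V$ and $TM$ (e.g.\ via the classifying map or a Milnor-Wood-type argument), concluding that some characteristic class of $TM$ that must be nonzero by the locally symmetric geometry is forced to vanish --- the contradiction.

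The main obstacle, as I see it, is twofold. First, cleanly ruling out case (B) of Margulis superrigidity: the extension $\wtil\rho\colon G\to \gl(d,\re)$ need not be unitary, so Type (2a) does not apply directly; one has to extract a unitary quotient on which Type (2a) can bite, and use the specific structure of Type (2) groups when necessary. Second, producing the characteristic-class contradiction in case (A): since the Pontryagin classes of $M$ vanish by hypothesis, the contradiction must come from a more refined invariant such as the Euler class (effective when $\chi(M)\ne 0$) or a Milnor-Wood-style bound on some auxiliary flat bundle associated to the geometry of $M$ (when $\chi(M)=0$). Identifying the correct invariant and executing the Chern-Weil computation to show it is nonzero is the essential step.
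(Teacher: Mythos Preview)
Your starting point is right: take the derivative representation $\rho=D_*\vp\colon\Ga\to\gl(d,\re)$ and invoke Margulis superrigidity. But the dichotomy you set up, and especially the endgame you propose for it, contains a genuine gap.

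First, your case (A) collapses into case (B). If $\rho(\Ga)$ is precompact, Type (2a) makes $\rho$ virtually trivial, and the trivial representation of $\Ga$ certainly extends to the trivial representation of $G$. So in both cases one has (after passing to finite index) a continuous extension $\psi\colon G\to\gl(d,\re)$ with $\psi|_\Ga=\rho$. There is no need to rule out (B); the extension is exactly what you want.

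The real gap is in how you plan to extract a contradiction. You propose comparing characteristic classes of the flat bundle $V$ with those of $TM$ and finding some class of $TM$ that must be nonzero. This cannot work in general: by the Type (2) hypothesis all Pontryagin classes of $TM$ vanish, and for most of the groups on the Type (2) list (e.g.\ $\Sl_n(\re)$ with $n\equiv 2,3\pmod 4$, all the complex groups, $\SO(n,1)$ for $n$ even) the symmetric space $G/K$ is odd-dimensional, so the Euler class vanishes as well. There is simply no rational characteristic class of $TM$ available to contradict. Milnor--Wood inequalities are likewise ineffective here, as the paper explicitly notes.

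The paper's contradiction lives elsewhere. The bordism between the tangent-sphere bundle and the flat bundle built from $\rho$ shows that, on the level of $H^*(B(-))$, the $K$-action on $\pa(G/K)$ and the restriction $\psi|_K$ induce the same map. The former is the isotropy representation $\iota\colon K\to\aut(\mf p)$; the latter is a genuine linear $K$-representation. A separate lemma (characteristic classes of compact-group representations determine the representation up to isomorphism) then forces $\psi|_K\cong\iota$. Hence $\iota$ extends to a representation of $G$. The contradiction is the purely Lie-theoretic fact, checked case by case for the Type (2) groups, that the isotropy representation $\iota\colon K\to\aut(\mf p)$ \emph{never} extends to $G$. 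This is the ``separate argument exploiting the specific list of Type (2) groups'' you gestured at, but it is the heart of the proof, not a side case, and it is a statement about representations of $(G,K)$ rather than about characteristic classes of $M$.
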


\begin{thm}
\label{thm:surfprod}
Let $G=\emph{\Sl}_2(\re)\ti\cd\ti\emph{\Sl}_2(\re)$, let $\Ga\sbs G$ be a cocompact lattice (possibly reducible), and let $M=\Ga\bs G/K$. Then \emph{Push} is not realized by diffeomorphisms. 
\end{thm}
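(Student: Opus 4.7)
The plan is to argue by contradiction, extending the Bestvina--Church--Souto strategy from the tangent bundle of a surface to the full tangent bundle of $M$. Note first that the hypothesis of Theorem \ref{thm:main} need not be satisfied here: for $n\ge 2$, an irreducible lattice $\Ga\sbs\Sl_2(\re)^n$ typically has nontrivial unitary representations (since $\Sl_2(\re)$ lacks Property (T)), so $M$ need not have Type $(2\mathrm{a})$. Suppose $\vp:\pi_1(M,*)\to\diff(M,*)$ realizes Push. Form the flat $M$-bundle
\[E:=\wtil M\ti_{\pi_1(M),\vp}M\ra M\]
with its tautological flat section $s(\bar x)=[\wtil x,*]$. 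The transition functions are locally constant with values in $\diff(M,*)$; taking basepoint derivatives gives locally constant transitions in $\gl(T_*M)$, so the vertical tangent bundle $T^vE$ is a flat vector bundle. Pulling back by $s$ yields a flat rank-$2n$ bundle $s^*T^vE\ra M$. As in \cite{bcs}, the fact that each $\vp(\ga)$ is isotopic to $\id_M$ can be used to show $e(s^*T^vE)=e(TM)$ in $H^{2n}(M;\re)$, so this flat bundle has Euler number $\chi(M)\ne 0$.

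I would then invoke the Sullivan--Smillie inequality: any flat oriented rank-$2n$ bundle $\xi$ on a closed oriented $2n$-manifold satisfies
\[\bigl|\langle e(\xi),[M]\rangle\bigr|\le \|M\|/2^{2n},\]
where $\|M\|$ is Gromov's simplicial volume. Applied to $s^*T^vE$, this gives $|\chi(M)|\le\|M\|/2^{2n}$. To derive a contradiction I use the Proportionality Principle for $M$ covered by $(\hy^2)^n$: Gauss--Bonnet gives $|\chi(M)|=\vol(M)/(2\pi)^n$, while $\|M\|=c_n\vol(M)$ for a constant $c_n$ depending only on the symmetric space. The desired contradiction reduces to the purely geometric inequality $c_n<(2/\pi)^n$.

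The main obstacle is verifying this inequality in every dimension. For $n=1$ it gives $c_1=1/\pi<2/\pi$, which is Milnor's classical bound and recovers the argument of \cite{bcs}. For $n=2$, Bucher--Karlsson's exact formula $\|M\|=\tfrac{3}{2}|\chi(M)|$ for closed quotients of $\hy^2\ti\hy^2$ gives $c_2=3/(8\pi^2)\ll 4/\pi^2$. For general $n$ in the reducible case, a finite cover of $M$ is a genuine product $\Si_1\ti\cd\ti\Si_n$ of closed hyperbolic surfaces, and a standard lifting argument shows that $\vp$ induces a realization of Push on this cover; one then appeals to Bucher--Karlsson-style multiplicative upper bounds on simplicial volumes of products of surfaces. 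The irreducible case (which can only arise for $n\ge 2$) is covered automatically, since $c_n$ depends only on the universal cover $(\hy^2)^n$. Cocompactness of $\Ga$ enters crucially through the Proportionality Principle and through the standard form of the Sullivan--Smillie bound.
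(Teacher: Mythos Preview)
Your overall architecture matches the paper's: assuming Push is realized, you produce a flat $\gl_{2n}^+(\re)$-bundle whose Euler class equals $e(TM)$, and then seek a Milnor--Wood--type inequality to derive a contradiction. The difference lies in which inequality you invoke, and this is where the proposal breaks down.

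You appeal to the Sullivan--Smillie bound $|e(\xi)|\le\|M\|/2^{2n}$ and then need $\|M\|<4^n|\chi(M)|$. For $n=1$ this is fine ($\|M\|=2|\chi(M)|$), and for $n=2$ Bucher's exact computation gives $\|M\|=6|\chi(M)|$ (not $\tfrac32|\chi(M)|$ as you wrote, though the conclusion survives since $6<16$). But for $n\ge3$ the simplicial volume of a closed quotient of $(\hy^2)^n$ is not known, and the ``multiplicative upper bounds'' you invoke---namely $\|M_1\ti M_2\|\le\binom{m_1+m_2}{m_1}\|M_1\|\,\|M_2\|$---yield only $\|M\|\le(2n)!\,|\chi(M)|$, which already exceeds $4^n|\chi(M)|$ at $n=2$. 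So the simplicial-volume route gives no contradiction for $n\ge3$, and your reducible/irreducible case split does not help: the constant $c_n$ you need is simply not available.

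The paper sidesteps this entirely by citing the Bucher--Gelander inequality (Theorem~\ref{thm:bg}): for any flat $\gl_{2k}^+(\re)$-bundle $E$ over a compact quotient of $(\hy^2)^k$ one has $|\eu(E)|\le 2^{-k}|\eu(TM)|$. This is proved via bounded cohomology of $\isom^+(\hy^2)^k$ directly, not through simplicial volume, and it immediately gives $|\chi(M)|\le 2^{-k}|\chi(M)|$, a contradiction. No finite covers or case analysis are needed.

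A secondary gap: your justification of $e(s^*T^vE)=e(TM)$ ``as in \cite{bcs}'' does not carry over to $\dim M>2$. The argument in \cite{bcs} uses Earle--Eells to identify a surface bundle from its monodromy, which fails in higher dimension. The paper replaces this step with the fiberwise-bordism construction of Sections~\ref{sec:bordant} and~\ref{sec:genproc} and then invokes Lemma~\ref{lem:sameeuler}.
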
 

\vspace{.1in} 

\noindent{\bf Methods of Proof.} The proofs of Theorems \ref{thm:pont} and \ref{thm:surfprod} have the same skeleton as the proof of the main theorem in \cite{bcs}. The key point is to show 
\begin{itemize}
\item[($\star$)] If $M$ has nonpositive curvature and Push is realized by diffeomorphisms, then the tangent bundle $TM\ra M$ has the same Euler and Pontryagin classes as a bundle with a flat $\gl_n(\re)$ connection. 
\end{itemize}
This amounts to the following commutative diagram (see Section \ref{sec:genproc}).
\begin{equation}\label{diag:maincoho}
\begin{gathered}
\begin{xy}
(-30,-10)*+{H^*(M)}="A";
(-5,0)*+{H^*(BG^\de)}="B";
(30,0)*+{H^*(BG)}="C";
(60,-10)*+{H^*\big(B\homeo(S^{n-1})\big)}="D";
(-5,-20)*+{H^*(B\gl_{n}\re^\de)}="E";
(30,-20)*+{H^*(B\gl_{n}\re)}="F";
{\ar"B";"A"}?*!/^3mm/{a^*};
{\ar "E";"A"}?*!/_3mm/{b^*};
{\ar "C";"B"}?*!/_3mm/{i^*};
{\ar "D";"C"}?*!/^3mm/{c^*};
{\ar "F";"E"}?*!/^3mm/{j^*};
{\ar "D";"F"}?*!/_3mm/{d^*};
\end{xy}
\end{gathered}
\end{equation}
Classical arguments and computations in algebraic topology (involving Chern-Weil theory and Milnor-Wood inequalities) provide examples of $M$ for which $TM\ra M$ has Euler or Pontryagin classes that differ from those of any flat bundle. By ($\star$) Push is not realized for these $M$.  

The proof of Theorem \ref{thm:main} differs from the above outline in an essential way: classical obstruction theory does not apply to manifolds of Type (2a) because they have vanishing Pontryagin classes and the Milnor-Wood inequalities are ineffective. We prove Theorem \ref{thm:main} using a combination of Margulis superrigidity and representation theory of Lie algebras. The rough idea follows. Suppose for a contradiction that Push is realized by diffeomorphisms, and let $\rho_0:\Ga\ra\gl_n(\re)$ be the action on the tangent space of $*$. 
\begin{enumerate}
\item[(i)] Use Margulis superrigidity to extend $\rho_0$ to $G$. The extension we will be denoted $\rho_0:G\ra\gl_n(\re)$.
\item[(ii)] Show that the restriction $\rho_0\rest{}{K}:K\ra\gl_n(\re)$ has the same characteristic classes as the \emph{isotropy representation} $\iota:K\ra\aut(\mf p)$ (Section \ref{sec:kact}). This implies $\rho_0\rest{}{K}$ and $\iota$ are isomorphic representations by Proposition \ref{prop:conjreps}. In particular, since $\rho_0\rest{}{K}$ is the restriction of a representation of $G$, the same must be true of $\iota$. 
\item[(iii)] Show that $\iota$ is not the restriction of any representation of $G$. 
\end{enumerate} 
Steps (ii) and (iii) combine to give the desired contradiction. 

\vspace{.1in}

\noindent{\it Remark.} There are two main features of this paper that distinguish it from \cite{bcs}. The first, mentioned above, is the use of Margulis superrigidity in the case when $M$ has no characteristic classes and classical algebraic topology arguments fail. The second is a new, more general proof of ($\star$). The proof of ($\star$) in \cite{bcs} relies on the well-known theorem of Earle-Eells \cite{ee} that an $M$-bundle $E\ra B$ is uniquely determined by its monodromy $\pi_1(B)\ra\Mod(M)$ when $\dim M=2$. This is false in general for $\dim M>2$, but we account for this in Section \ref{sec:bordant} by proving a general fact about fiberwise bordant sphere bundles.

\vspace{.1in}

\noindent{\bf Structure of the paper.} In Section \ref{sec:pushing}, we define the point-pushing homomorphism. In Sections \ref{sec:flat} and \ref{sec:bordant} we recall the definitions of Euler and Pontryagin classes of topological sphere bundles, define the notion of fiberwise bordant bundles, and explain why fiberwise bordant sphere bundles have the same Euler and Pontryagin classes. In Section \ref{sec:res} we describe how to determine if a Type (2) manifold has Type (2a) or (2b). In Section \ref{sec:genproc}, we adapt the argument of Bestvina-Church-Souto \cite{bcs} that translates Problem \ref{prob} to a problem about bordant sphere bundles.  In Sections \ref{sec:proofs} and \ref{sec:isotropyextend} we prove Theorems \ref{thm:pont}, \ref{thm:main}, and \ref{thm:surfprod}, and in Section \ref{sec:zimmer} we mention an example related to the Zimmer program.  

\subsection{Acknowledgements} 

The author would like to thank D.\ Calegari for suggesting the approach using superrigidity; the author thanks S.\ Weinberger for explaining the relationship between this question and the Zimmer program; the author thanks A.\ Hatcher for explaining the proof of Proposition \ref{prop:ratsurj}; the author is grateful to O.\ Randal-Williams for giving an alternate proof of Proposition \ref{prop:stabequiv}, which greatly simplified the author's original argument. The author is indebted to his advisor B.\ Farb who has been extremely generous with his time, energy, and advice throughout this project. Thanks to  B.\ Farb, K.\ Mann, and W.\ van Limbeek for extensive comments on drafts of this paper, and thanks to T.\ Church, K.\ Mann, D.\ Studenmund, W.\ van Limbeek, and D.\ Witte Morris for several useful conversations. Finally, the author thanks the referee for carefully reading the paper and offering valuable suggestions. 

\section{The point-pushing homomorphism $\pi_1(M)\ra\pi_0(\diff(M,*))$}\label{sec:pushing}

First we define Push. Then we express it as the monodromy of a fiber bundle and relate Problem \ref{prob} to a question about flat connections. 

\subsection{Point-pushing diffeomorphisms} The material of this subsection is well known; for further details, see \cite{fm} Page 101. We begin with a simple example. Let $M\sbs\re^2$ be an annulus whose core curve $\ga\simeq S^1$ is the unit circle. Let $*=(1,0)$ be the basepoint. It is easy to construct a family $f_t:M\ra M$ of diffeomorphisms that push $*$ around the curve $\ga$ and fix the boundary of the annulus pointwise. The ending diffeomorphism $f_{2\pi}$ is an example of a point-pushing diffeomorphism. See the figure below. 
\begin{figure}[h!]
  \centering
    \includegraphics[scale=.5]{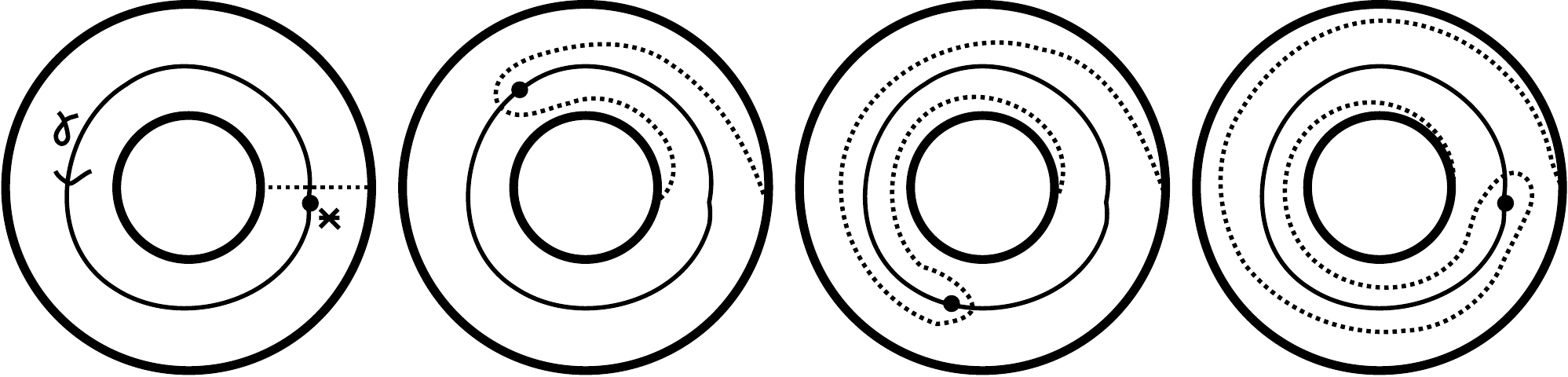}
      \caption{$\ga$ determines a flow under whose time-$2\pi$ map is a point-pushing diffeomorphism.}
\end{figure}

Point-pushing diffeomorphisms can be defined on any based manifold. Let $M$ be a manifold with basepoint $*$, and let $\ga:S^1\ra M$ be a smooth embedding of the unit circle, and assume that $\ga(S^1)$ passes through $*$. The positively oriented unit-speed vector field on $S^1$ defines a vector field on the $\ga(S^1)\sbs M$, and this vector field extends to a vector field on $M$ that is identically zero outside a tubular neighborhood of $\ga(S^1)$. The flow of this vector field is an isotopy of $M$ that moves $*$ around $\ga(S^1)$, and the time-2$\pi$ map is a diffeomorphism $f_\ga:M\ra M$ fixing $*$. The map $f_\ga$ is a \emph{point-pushing diffeomorphism} associated to $\ga$. 

Given $[\ga]\in\pi_1(M)$, there are several choices involved in defining $f_\ga$; however, different choices result in isotopic diffeomorphisms. In particular, if $\ga$ and $\eta$ are loops that are homotopic relative to $*$, then $f_\ga$ and $f_\eta$ are isotopic relative to $*$. In other words, there is a well-defined homomorphism 
\[\begin{array}{lrllll}\text{Push}: &\pi_1(M,*)&\ra&\pi_0\diff(M,*)\\
&[\ga]&\mapsto &[f_\ga]
\end{array}\]
Since $f_\ga$ is defined using a flow, $f_\ga$ is isotopic to the identity; in other words, $[f_\ga]$ is trivial in $\pi_0\diff(M)$. On the other hand, $f_\ga$ is not necessarily isotopic to the identity through diffeomorphisms fixing $*$, and the isotopy class of $f_\ga$ in $\pi_0\diff(M,*)$ can be interesting. In fact, the kernel of the Push map is contained in the center of $\pi_1(M)$. This fact is best viewed from the following perspective. Consider the evaluation map
\[\begin{array}{rcc}\diff(M)&\xra{\eta}&M\\f&\mapsto &f(*).\end{array}\]
The preimage of $*\in M$ is $\diff(M,*)\sbs\diff(M)$. It is known that $\eta$ is a bundle map (see the proof of Theorem 4.6 in \cite{fm}, which is completely general). The long exact sequence of homotopy groups gives an exact sequence \[\pi_1\diff(M)\xra{\eta_*}\pi_1(M,*)\xra{P}\pi_0\diff(M,*)\ra\pi_0\diff(M)\ra 0.\]From the abstract construction of the long exact sequence of a fibration, it is easy to see that the connecting homomorphism $P$ is equal to Push. The fact that $\ker(\text{Push})$ is central in $\pi_1(M,*)$ follows from the fact that the image of $\eta_*$ is contained in the center of $\pi_1(M,*)$ (see \cite{hatcher} Page 40). In particular, if $M$ is a nonpositively curved locally symmetric manifold of noncompact type, then the center of $\pi_1(M)$ is trivial, so Push is injective.

\subsection{Point-pushing as monodromy}\label{sec:monodromy}

In this subsection we explain the geometric nature of Problem \ref{prob}. Let $(M,*)$ and $(F,\star)$ be based manifolds. Let $X=M\ti F$, and denote by $p_M$ and $p_F$ the projections to $M$ and $F$, respectively. Consider $X$ as a bundle over $M$, and let $\si:M\ra X$ be any section with $\si(*)=\star$. Using $\si$, one can define local trivializations on $X\ra M$ so that the transition maps lie in $\diff(F,\star)$. It is not hard to see that the monodromy $\pi_1(M,*)\ra\pi_0\diff(F,\star)$ of this bundle is the composition 
\[\pi_1(M)\xra{(p_F\circ\si)_*}\pi_1(F)\xra{\text{Push}}\pi_0\diff(F,\star).\]

Now consider the special case when $(F,\star)=(M,*)$, so that $X=M\ti M$. Take $\si$ to be the diagonal map $\De :M\ra M\ti M$. Then as a bundle with section, $X$ has monodromy
\[\text{Push}: \pi_1(M)\ra\pi_0\diff(M,*).\]
This interpretation of Push allows us to interpret a lift $\vp:\pi_1(M)\ra\diff(M,*)$ in (\ref{diag:q1}) as follows. Recall that for any pair of manifolds $M,F$, a smooth $F$-bundle $E\ra M$ is determined by a map $M\ra B\diff(F)$. More precisely, there is a bijective correspondence
\[\left\{\begin{array}{cc}\text{Homotopy classes of maps}\\M\ra B\diff(F)\end{array}\right\}\longleftrightarrow \left\{\begin{array}{cc}\text{Isomorphism classes of}\\ \text{$F$-bundles } E\ra M\end{array}\right\}.\]
Similarly, a \emph{flat} $F$-bundle over $M$ is determined by a holonomy homomorphism $\pi_1(M)\ra\diff(F)$ (this is explained further in Section \ref{sec:flat}), and there is a bijective correspondence 
\[\left\{\begin{array}{cc}\text{Conjugacy classes of}\\\text{representations }\pi_1(M)\ra\diff(F)\end{array}\right\}\longleftrightarrow \left\{\begin{array}{cc}\text{Isomorphism classes of flat }\\\text{$F$-bundles } E\ra M\end{array}\right\}.\]
In a similar fashion, homotopy classes of maps $M\ra B\diff(F,\star)$ are in bijective correspondence with isomorphism classes of $F$-bundles $E\ra M$ with a \emph{distinguished section}, and  conjugacy classes of representations $\pi_1(M)\ra\diff(F,\star)$ are in bijective correspondence with isomorphism classes of $F$-bundles $E\ra M$ with a distinguished section and a foliation transverse to the fibers such that the section is one of the leaves.  

In particular, the bundle $X=M\ti M\ra M$ with section $\De:M\ra M\ti M$ defined above is classified by a map $f: M\ra B\diff(M,*)$. The monodromy is the induced map on fundamental groups
\[f_*=\text{Push}:\pi_1(M)\ra \pi_1\big(B\diff(M,*)\big)\simeq\pi_0\diff(M,*).\]
$M\ti M\ra M$ is flat (with respect to $\De$) if $M\ti M$ has a foliation transverse to the ``vertical" foliation (whose leaves are $\{x\}\ti M$) and such that the diagonal $\{(x,x): x\in M\}\sbs M\ti M$ is one of the leaves. If $M\ti M\ra M$ is flat with respect to $\De$, then the holonomy $\vp:\pi_1(M)\ra\diff(M,*)$ realizes Push.

In \cite{bcs}, Bestvina-Church-Souto show that Push is \emph{not} realized for $M=S_g$ a closed surface of genus $g\ge2$. The cases $g=0,1$ are uninteresting: For $g=0$ there is no question since $\pi_1(S_0)=0$; for $g=1$ a lift $\vp$ \emph{does} exist, but only because the map Push$:\pi_1(\ts^2)\ra\pi_0\diff(\ts^2,*)$ is trivial. This is illustrated topologically in Figure 2. For a punctured surface, the fundamental group is free, and so Push is also realized in this case.

\begin{figure}[h!] \label{fig:ppt}
\labellist 
\small\hair 2pt 
\pinlabel $f_\ga(\eta)$ at 463 166
\pinlabel $\eta$ [bl] at 255 150 
\pinlabel $\ga$ [r] at 239 106 
\endlabellist 
\centering 
\includegraphics[scale=0.5]{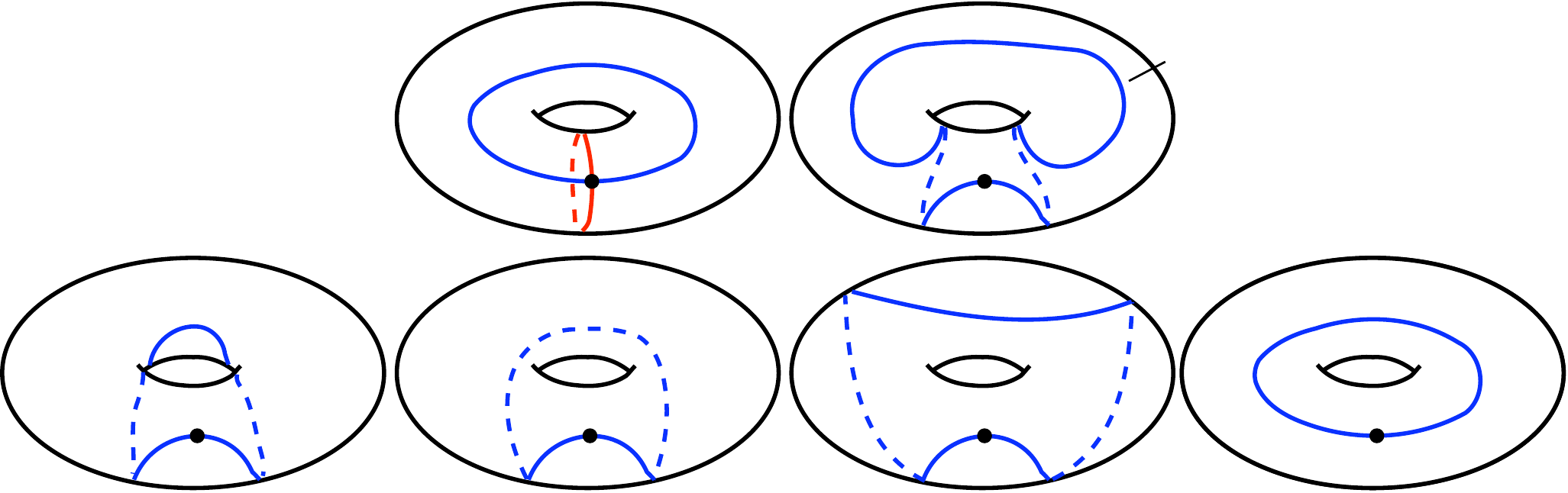} 
\caption{Shows a homotopy (rel basepoint) between $f_\ga(\eta)$ and $\eta$; from this and the fact that $f_\ga(\ga)=\ga$ it follows that Push($\ga$) is trivial in $\diff(M,*)$.}  
\end{figure}

\section{Characteristic classes and flat bundles}\label{sec:flat}

In this section we recall some well-known facts about characteristic classes and flat bundles. Then we describe a construction of the Pontryagin classes of a locally symmetric manifold using the action on the visual boundary by homeomorphisms. The material of this section will be used heavily in Sections \ref{sec:bordant} and \ref{sec:proofs}. 

\vspace{.1in}

\noindent{\bf Euler class of sphere bundles.} We briefly recall the obstruction theory definition of the Euler class, which we will use in the next section. For details see \cite{Steenrod} Section 32.  Let $\pi: E\ra B$ be an oriented topological $F$-bundle, and assume that $B$ is triangulated. Let $m$ be the smallest integer so that $\pi_m(F)$ is nontrivial. Choose a section $\si:B^{(m)}\ra E$ over the $m$-skeleton $B^{(m)}$; this can be done inductively using the fact that $\pi_i(F)=0$ for $i<m$.  The obstruction to extending the section from the $m$-skeleton to the $(m+1)$-skeleton is measured by a simplicial $(m+1)$-cochain
\[e: \{ (m+1)\text{-simplices of }B\}\ra \pi_{m}(F).\]
It is easy to show that $e$ is a cocycle and that different choices of the section on the $m$-skeleton $B^{(m)}$ define cohomologous cocycles. Then the class $[e]\in H^{m+1}\big(B;\pi_m(F)\big)$ depends only on the bundle $E\ra B$. When $F$ is a sphere, $[e]$ is called the \emph{Euler class} of $E$ and is denoted $e(E)$.

\vspace{.1in}

\noindent{\bf  Pontryagin classes of sphere bundles.} The Pontryagin classes $p_i\in H^*(BO_n)$ are invariants of real vector bundles. The following proposition shows that these invariants can also be defined for topological $\re^n$-bundles. 
\begin{prop}\label{prop:ratsurj}
 The inclusion $g:O_{n}\hra\emph{\homeo}(\re^n)$ induces a surjection  
\[g^*:H^*\big(B\emph{\homeo}(\re^n)\big)\ra H^*\big(BO_{n}\big)\]
with rational coefficients.
\end{prop}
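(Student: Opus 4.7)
My plan is to strengthen the conclusion: I would argue that $g^*$ is in fact a rational isomorphism. The approach is to factor the inclusion through the subgroup $\Top(n) := \homeo(\re^n,0)$ of origin-preserving homeomorphisms, and show that each step is a rational equivalence on classifying spaces.

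First I would show that the inclusion $\Top(n) \hra \homeo(\re^n)$ is a homotopy equivalence. Evaluation at the origin gives a fiber bundle
\[\Top(n) \ra \homeo(\re^n) \xra{\ev_0} \re^n,\]
and because the base $\re^n$ is contractible the fiber inclusion is a weak equivalence; an explicit deformation retraction is given by $f_t(x) := f(x) - t\cdot f(0)$. Passing to classifying spaces reduces the proposition to showing that $BO(n) \ra B\Top(n)$ is surjective on $H^*(-;\q)$.

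For this main step I would invoke smoothing theory. The homotopy fiber of $BO(n) \ra B\Top(n)$ is the homogeneous space $\Top(n)/O(n)$, which by Kirby--Siebenmann (for $n \ge 5$) together with classical results in lower dimensions has finite homotopy groups in every degree. Consequently its rational cohomology vanishes in positive degrees, and the Serre spectral sequence of the fibration
\[\Top(n)/O(n) \ra BO(n) \ra B\Top(n)\]
collapses on the $E_2$-page, yielding a rational isomorphism $H^*(B\Top(n);\q) \xra{\sim} H^*(BO_n;\q)$ and hence, by the first step, a rational isomorphism $H^*(B\homeo(\re^n);\q) \xra{\sim} H^*(BO_n;\q)$.

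The hard part will be the smoothing-theoretic input that $\Top(n)/O(n)$ has only torsion in its homotopy groups. For $n \le 3$ one has the stronger statement $\homeo(\re^n) \simeq O(n)$ (elementary for $n \le 2$, and via Hatcher's proof of the Smale conjecture for $n = 3$); for $n \ge 5$ this is standard Kirby--Siebenmann; and $n=4$ must be handled separately. An alternative, more elementary route---possibly the one Hatcher had in mind---would bypass the computation of the fiber altogether by constructing cocycles on $B\homeo(\re^n)$ representing the Pontryagin classes directly via a Thom-type construction for microbundles, which would give rational surjectivity of $g^*$ without any claim about the full fiber.
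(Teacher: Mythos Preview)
Your argument contains a genuine error: the claim that $\Top(n)/O(n)$ has finite homotopy groups for each $n$ is false, and this is not what Kirby--Siebenmann prove. Their theorem is a \emph{stable} statement: the space $\Top/O = \colim_n \Top(n)/O(n)$ has finite homotopy groups. Unstably the situation is quite different. For $n\ge 5$ Morlet's comparison theorem identifies $B\diff_\pa(D^n)$ with a component of $\Omega^n\big(\Top(n)/O(n)\big)$, and Farrell--Hsiang showed that $\pi_i\big(\diff_\pa(D^n)\big)\ot\q$ is nonzero for many values of $i$ and $n$; hence $\Top(n)/O(n)$ has nontrivial rational homotopy. Consequently your Serre spectral sequence does not collapse as claimed, and in fact the stronger conclusion you are aiming for---that $g^*$ is a rational isomorphism---is known to be false: by recent work of Weiss the class $p_m-e^2$ is nonzero in $H^{4m}\big(B\Top(2m);\q\big)$ for suitable $m$, so $g^*$ has nontrivial kernel.

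The paper's proof avoids this trap by staying in the stable range, where Kirby--Siebenmann genuinely applies. One first shows that $BO\ra B\Top$ is a rational equivalence (this uses only the stable finiteness of $\pi_*(\Top/O)$), so each $p_i\in H^{4i}(BO;\q)$ lifts uniquely to a class $\wtil p_i\in H^{4i}(B\Top;\q)$. Surjectivity of $g^*$ then follows from the commutative square
\[
\begin{xy}
(-15,8)*+{H^*(B\Top;\q)}="A";
(20,8)*+{H^*(BO;\q)}="B";
(-15,-8)*+{H^*(B\Top_n;\q)}="C";
(20,-8)*+{H^*(BO_n;\q)}="D";
{\ar"A";"B"};
{\ar"A";"C"};
{\ar"B";"D"};
{\ar"C";"D"};
\end{xy}
\]
since the composite $H^*(B\Top;\q)\ra H^*(BO;\q)\ra H^*(BO_n;\q)$ is surjective. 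This gives surjectivity of the bottom map without any control over its kernel, which is exactly what the proposition asserts and all that is needed later. Your ``alternative route'' via a direct cocycle construction is not fleshed out enough to evaluate, but the stable argument above is presumably the one Hatcher communicated.
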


This is indicated by Novikov's theorem on topological invariance of Pontryagin classes. The proof below uses Kirby-Siebenmann \cite{ks}, and was explained to the author via email by A.\ Hatcher. 

\begin{proof}[Proof of Proposition \ref{prop:ratsurj}]
Let $\Top_n$ be the semi-simplicial group for which $B\Top_n$ is a model for the classifying space $B\homeo(\re^n)$. There is a fibration
\[\Top_n/\text{O}_n\ra B\text{O}_n\ra B\text{Top}_n\]
In the limit $n\ra\infty$, this sequence becomes $\Top/\text{O}\ra B\text{O}\ra B\text{Top}$. Kirby-Siebenmann \cite{ks} show that Top/O has finite homotopy groups; it follows that Top/O has finite homology groups with $\z$ coefficients. Then $BO\ra B\text{Top}$ is a rational homology equivalence, and there exists a unique class $\wtil p_i\in H^{4i}(B\Top;\q)$ that restricts to the Pontryagin class $p_i\in H^{4i}(BO;\q)$. Now to obtain the proposition, note that the restriction 
\[H^*(B\text{Top}) \ra H^*(BO)\ra H^*(BO_n)\] can also be factored as $H^*(B\text{Top})\ra  H^*(B\text{Top}_n) \ra H^*(BO_n)$. Thus the restriction of $\wtil p_i$ to $H^{4i}(B\Top_n;\q)$ gives a class that maps to $p_i\in H^{4i}(BO_n;\q)$, as desired.
\end{proof}

From Proposition \ref{prop:ratsurj}, Pontryagin classes of $S^{n-1}$-bundles can be defined as follows. Define a homomorphism $\al:\homeo(S^{n-1})\ra\homeo(\re^{n})$ using the Alexander trick: $\al(f)$ performs the homeomorphism $f$ on the sphere of radius $r$ for every $r>0$, and $\al(f)$ fixes the origin. This induces maps between classifying spaces and hence a map
\[\al^*:H^*\big(B\homeo(\re^{n})\big)\ra H^*\big(B\homeo(S^{n-1})\big)\]
Note that the restriction of $\al$ to the subgroup $O_n\sbs\homeo(S^{n-1})$ is the standard action $O_n\ra\homeo(\re^n)$, so there is a commutative diagram
\begin{equation}\label{diag:ratsurj}
\begin{gathered}
\begin{xy}
(-22,0)*+{H^*\big(B\homeo(\re^n)\big)}="A";
(22,0)*+{H^*\big(B\homeo(S^{n-1})\big)}="B";
(0,-10)*+{H^*(BO_n)}="C";
{\ar"A";"B"}?*!/_3mm/{\al^*};
{\ar "A";"C"}?*!/^3mm/{g^*};
{\ar "B";"C"}?*!/_3mm/{};
\end{xy}
\end{gathered}
\end{equation}

By Proposition \ref{prop:ratsurj}, there is a class $\wtil p_i\in H^{4i}\big(B\homeo(\re^n)\big)$ with $g^*(\wtil p_i)=p_i$. Since Diagram \ref{diag:ratsurj} commutes, $\al^*(\wtil p_i)\in H^{4i}\big(B\homeo(S^{n-1})\big)$ is nontrivial. We refer to the classes $q_i=\al^*(\wtil p_i)$ as the Pontryagin classes of topological $S^{n-1}$-bundles, and we refer to the class $q=1+q_1+\cd+q_{[n/2]}$ as the \emph{total Pontryagin class}. 

\vspace{.1in}

\noindent{\bf  Flat bundles.} Let $F$ be a topological space and let $\ca G\sbs\homeo(F)$ a subgroup. Let $B$ be a manifold and let $\rho:\pi_1(B)\ra\ca G$ be a homomorphism. Define $E$ as the quotient of $\wtil B\ti F$ by the diagonal action of $\pi_1(B)$, where $\pi_1(B)$ acts by deck transformations on $\wtil B$ and by $\rho$ on $F$. Then $E$ has a natural projection $E\ra B$ with fiber $F$. An $F$-bundle $E\ra B$ obtained from this construction is called a \emph{flat $\ca G$-bundle} or a bundle with \emph{flat $\ca G$ structure}. 

A flat bundle comes equipped with a foliation: The space $\wtil B\ti F$ is naturally foliated by subspaces $\wtil B\ti\{f\}$ for $f\in F$, and this foliation descends to a foliation on $E$ so that the leaves are covering spaces of the base. The existence of such a foliation allows one to define parallel transport along curves in the base, in a way that is well-defined up to homotopy (preserving endpoints). Then parallel transport of loops at a basepoint $*\in B$ define a holonomy map $\pi_1(B,*)\ra\homeo(F)$, and this recovers the representation $\rho$ that was used to define $E$. This shows that flat bundles $\pi:E\ra B$ are characterized by the existence of a foliation on $E$ whose leaves project to $B$ as covering spaces.

Let $E\ra B$ be a flat $\ca G$-bundle. Because $E$ has parallel transport that is well defined up to homotopy, one can define local trivializations on $E\ra B$ so that the transition maps are locally constant. Then the structure group of $E$ reduces to $\ca G^\de$, which is the group $\ca G$ viewed as a topological group with the discrete topology. The classifying space for bundles with flat $\ca G$ structure is the space $B\ca G^\de$. The identity map $G^\de\ra G$ is continuous and induces a map $B\ca G^\de\ra B\ca G$ which corresponds to forgetting the flat structure. 

\vspace{.1in}

\noindent{\bf  Significance of the structure group.} Here is an example that illustrates the importance of the structure group $\ca G$ in the definition of flat $\ca G$-bundle. Let $S_g$ be a closed surface of genus $g\ge2$, and let $E\ra S_g$ be an oriented topological circle bundle. By definition, the structure group of $E\ra S_g$ is contained in the group of orientation-preserving homeomorphisms $\homeo(S^1)$. In fact, since $\homeo(S^1)$ deformation retracts to the subgroup of rotations $\SO(2)$, the bundle $E\ra S_g$ is isomorphic to a bundle $E'\ra S_g$ that has structure group $\SO(2)$. Hence the set of isomorphism classes of circle bundles over $S_g$ does not change if one changes the structure group from $\homeo(S^1)$ to $\SO(2)$ or $\Sl_2\re$ or $\psl_2\re$.   

The story is different for flat bundles. It is not hard to show that the only flat $\SO(2)$ circle bundle is the trivial bundle $E=S_g\ti S^1$. On the other hand, the unit tangent bundle $T^1S_g\ra S_g$ has a flat $\psl_2\re$ structure, and hence also a flat $\homeo(S^1)$ structure. In contrast, $T^1S_g$ does \emph{not} have a flat $\gl_2^+\re$ structure by Milnor's inequality \cite{bg} (stated in Theorem \ref{thm:bg}) because the Euler number of the unit tangent bundle is $\chi(S_g)=2-2g$.

\vspace{.1in}

\noindent{\bf  Pontryagin classes of a locally symmetric manifold.} Let $G$ be a semisimple real Lie group without compact factors and let $K\sbs G$ be a maximal compact subgroup. Let $n=\dim G/K$. The manifold $G/K$ is contractible and has a metric of nonpositive curvature so that $G$ acts on $G/K$ isometrically. In addition $G$ acts on the visual boundary $\pa (G/K)\simeq S^{n-1}$ \cite{bgs}. In general, the visual boundary of a contractible, nonpositively-curved manifold has no natural smooth structure, so even though $G/K$ is an algebraic example, the action on $\pa(G/K)$ is in general only by homeomorphisms. 

Fix $\Ga\sbs G$ a lattice and denote $M=\Ga\bs G/K$. The sequence
\[\Ga\hra G^\de\ra G\ra \homeo(S^{n-1})\] of maps of topological groups induces a sequence of maps of classifying spaces 
\begin{equation}\label{eqn:pont}M\ra BG^\de\ra BG\ra B\homeo(S^{n-1}).\end{equation} 
Under this map, the universal sphere bundle over $B\homeo(S^{n-1})$ pulls back to the unit tangent bundle of $M$. This is shown in the lemma below.
\begin{lem}\label{lem:tang}
Let $M$ be a complete Riemannian manifold of nonpositive curvature with universal cover $\wtil M$. The sphere bundle with monodromy given by the action of the deck group $\pi_1(M)$ on the ideal boundary $\pa\wtil M\simeq S^{n-1}$ is isomorphic to the unit tangent bundle of $M$. 
\end{lem}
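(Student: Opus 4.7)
The plan is to exhibit an explicit $\pi_1(M)$-equivariant homeomorphism between the unit tangent bundle $T^1\wtil M$ and the product $\wtil M\ti\pa\wtil M$, where $\pi_1(M)$ acts on the domain via the differentials of deck transformations and on the target diagonally (by deck transformations on the first factor and by the visual action on the boundary factor). Quotienting both sides by $\pi_1(M)$ will identify $T^1M$ with the flat $S^{n-1}$-bundle over $M$ constructed from the boundary action, which is exactly the bundle classified by (\ref{eqn:pont}).

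First, I would use the Cartan--Hadamard theorem: since $\wtil M$ is complete, simply connected, and nonpositively curved, for every $p\in\wtil M$ the exponential map $\exp_p:T_p\wtil M\ra\wtil M$ is a diffeomorphism. Thus each unit vector $v\in T_p^1\wtil M$ determines a unique unit-speed geodesic ray $\ga_v(t)=\exp_p(tv)$, and the visual boundary $\pa\wtil M$ (equipped with the cone topology) is by definition the set of asymptotic equivalence classes of such rays, so the assignment
\[
\Phi_p: T_p^1\wtil M\ra\pa\wtil M,\qquad v\mapsto[\ga_v]
\]
is a homeomorphism. I would then define
\[
\Phi: T^1\wtil M\ra\wtil M\ti\pa\wtil M,\qquad (p,v)\mapsto\bigl(p,\Phi_p(v)\bigr).
\]
The main technical content of the lemma is the continuity of $\Phi$ and its inverse as $p$ varies. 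This follows from the standard fact (see e.g.\ \cite{bgs}) that in a Hadamard manifold the map $(p,\xi)\mapsto v_p(\xi)$, where $v_p(\xi)$ is the unique unit vector at $p$ pointing at $\xi\in\pa\wtil M$, is continuous in the cone topology; equivalently, geodesic rays depend continuously on their initial conditions in a way that passes to the boundary.

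Next, I would check equivariance. A deck transformation $\ga\in\pi_1(M)$ acts as an isometry of $\wtil M$, hence its differential sends unit vectors to unit vectors and sends the geodesic $\ga_v$ to the geodesic $\ga_{d\ga(v)}$ starting at $\ga(p)$. Because the visual boundary action of $\ga$ is defined by sending an asymptotic class $[\si]$ to $[\ga\circ\si]$, we get
\[
\Phi\bigl(\ga\cdot(p,v)\bigr)=\bigl(\ga(p),[\ga\circ\ga_v]\bigr)=\bigl(\ga(p),\ga\cdot\Phi_p(v)\bigr)=\ga\cdot\Phi(p,v),
\]
which is precisely the diagonal action on the target. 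Passing to quotients, $\Phi$ descends to a homeomorphism $T^1M\ra(\wtil M\ti\pa\wtil M)/\pi_1(M)$ covering the identity on $M$, which is an isomorphism of $S^{n-1}$-bundles. The hardest step is really the continuity of $\Phi^{-1}$ at points where $p$ moves and $v$ moves correspondingly; this is a standard, but genuinely nontrivial, property of the cone topology on the visual boundary of a Hadamard manifold that I would quote from \cite{bgs} rather than reprove.
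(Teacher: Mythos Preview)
Your argument is correct and is the standard one: trivialize $T^1\wtil M$ over the Hadamard manifold $\wtil M$ via the geodesic-ray map to $\pa\wtil M$, check $\pi_1(M)$-equivariance, and pass to the quotient. Note, however, that the paper does not actually prove Lemma~\ref{lem:tang} here; it simply states the lemma and defers the proof to \cite{tshishiku2}. That said, the very same idea you use---the exponential homeomorphism $s:T^1_{eK}(G/K)\ra\pa(G/K)$ and its equivariance under isometries---appears explicitly in Section~\ref{sec:kact} (at the single basepoint $eK$, for the $K$-action), so your global version is entirely in the spirit of what the paper does and presumably matches the argument in \cite{tshishiku2}.
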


Lemma \ref{lem:tang} is proved in \cite{tshishiku2}. Here we discuss how to use (\ref{eqn:pont}) to study the Pontryagin classes of locally symmetric manifolds. For a more complete investigation, see \cite{tshishiku2}. 

Lemma \ref{lem:tang} implies that the pullback of $q_i\in H^{4i}\big(B\homeo(S^{n-1})\big)$ along the map in (\ref{eqn:pont}) is the $i$-th Pontryagin class $p_i(TM)\in H^{4i}(M)$. Hence the kernel of the map $H^*(BG)\ra H^*(BG^\de)$ is a potential obstruction to $p_i(TM)$ being nonzero. The following theorem allows us to compute $H^*(BG)\ra H^*(BG^\de)$ for $G$ a semisimple Lie group. 

\begin{thm}[See \cite{milnor_liediscrete}]\label{thm:milnor}
Let $G$ be a real semisimple, connected Lie group; let $G_\co$ be its complexification. The sequence
\[H^*(B G_\co;\q)\ra H^*(BG;\q)\ra H^*(BG^\de;\q)\]
induced by the maps $G^\de\ra G\ra G_\co$ is ``exact" in the sense that the kernel of $H^*(BG)\ra H^*(BG^\de)$ is the ideal generated by the images of $H^i(BG_\co)\ra H^i(BG)$ for $i>0$.\end{thm}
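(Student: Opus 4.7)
The approach reduces the theorem to a statement about compact groups via Chern-Weil theory and then handles the converse using the topology of compact symmetric spaces. Let $K \sbs G$ and $U \sbs G_\co$ be maximal compact subgroups arranged so that $K \sbs U$. By Iwasawa decomposition the inclusions $K \hra G$ and $U \hra G_\co$ are homotopy equivalences, so $BG \simeq BK$ and $BG_\co \simeq BU$. Under Borel's identifications $H^*(BK;\q)\cong S(\mf k^*)^K$ and $H^*(BU;\q)\cong S(\mf u^*)^U$, the map $H^*(BG_\co;\q)\ra H^*(BG;\q)$ becomes restriction of invariant polynomials from $\mf u$ to $\mf k$.

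First I would establish the easy containment, that the ideal generated by the positive-degree image lies in the kernel. A class in the image of $H^*(BG_\co;\q)\ra H^*(BG;\q)$ is represented by a $G_\co$-invariant polynomial on $\mf g_\co$, equivalently by a $G$-invariant polynomial on $\mf g$ via Chevalley restriction. By Chern-Weil, such a class is represented on any principal $G$-bundle as a polynomial expression in the curvature of a connection, and for a flat $G$-bundle the curvature vanishes identically. Hence these classes pull back to zero in $H^*(BG^\de;\q)$, and since the kernel of a ring homomorphism is an ideal, the whole ideal they generate is killed.

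The harder direction --- that the kernel is contained in this ideal --- is the main obstacle. I would study the Serre spectral sequence of the fibration $U/K \ra BK \ra BU$: for the compact symmetric pair $(U,K)$ it degenerates rationally (Borel), producing an isomorphism of $H^*(BU;\q)$-modules
\[H^*(BK;\q)/I \cong H^*(U/K;\q),\]
where $I$ is the ideal generated by the positive-degree image of $H^*(BU;\q)$. It then suffices to detect every nonzero class of $H^*(U/K;\q)$ in $H^*(BG^\de;\q)$. Cocompact torsion-free lattices $\Ga \sbs G$ (which exist by Borel--Harish-Chandra) provide the detectors: the locally symmetric manifold $M = \Ga\bs G/K$ carries a canonical flat $G$-bundle from the inclusion $\Ga \hra G$, whose classifying map $M \ra BG^\de$ composed with $BG^\de \ra BG \simeq BK$ classifies the canonical $K$-bundle $\Ga\bs G \ra M$. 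Matsushima's theorem then guarantees that the induced map $H^*(U/K;\q)\ra H^*(M;\q)$ is injective, exhibiting every nontrivial class of $H^*(BK;\q)/I$ as nonzero in $H^*(BG^\de;\q)$.

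The most delicate point is arranging Matsushima-type detection uniformly across all $G$: one needs rational degeneration of the Serre spectral sequence for every compact symmetric pair, plus enough cocompact lattices to detect all of $H^*(U/K;\q)$. Factors of $G$ lacking convenient cocompact lattices can be handled by K\"unneth and reduction to simple factors.
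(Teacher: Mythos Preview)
The paper does not actually prove this theorem; it is quoted from Milnor's paper \cite{milnor_liediscrete} with only the remark that ``the proof uses Chern--Weil theory,'' followed by the illustrative compact case. So there is no proof in the paper to compare against beyond that citation.

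Your outline is essentially correct and is close to the standard argument. A few points of precision are worth tightening. First, the rational degeneration of the Serre spectral sequence of $U/K\to BK\to BU$ for an arbitrary compact \emph{symmetric} pair is due to H.~Cartan; Borel's theorem treats the equal-rank case, which does not cover, e.g., $K=\SO_n\sbs U=\SU_n$ coming from $G=\Sl_n(\re)$. Second, the injectivity $H^*(U/K;\q)\hra H^*(M;\q)$ you need is not really ``Matsushima's theorem'' (that concerns \emph{surjectivity} in a range of degrees); the injectivity is the more elementary fact that $G$-invariant forms on $G/K$ are closed and coclosed (the differential on invariant forms vanishes for a symmetric pair), hence descend to harmonic forms on the compact quotient $M$, and nonzero harmonic forms represent nonzero classes by Hodge theory. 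Third, for the existence of cocompact torsion-free lattices in every semisimple $G$ you want Borel's compact Clifford--Klein form construction together with Selberg's lemma, rather than Borel--Harish-Chandra (which produces arithmetic lattices that need not be cocompact). With these adjustments your sketch goes through.
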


The proof of Theorem \ref{thm:milnor} uses Chern-Weil theory. As an illustrative example, consider the case when $G$ is a real compact Lie group. Then $G$ is the maximal compact subgroup of $G_\co$, so the inclusion $G\hra G_\co$ is a homotopy equivalence. This implies that $H^*(BG_\co)\ra H^*(BG)$ is an isomorphism. Then by Theorem \ref{thm:milnor} the map $H^*(BG)\ra H^*(BG^\de)$ is identically zero. For $G=O_n$ this is the familiar fact from Chern-Weil theory that a vector bundle with a flat (that is, curvature 0) metric connection has vanishing Pontryagin classes. We now apply Theorem \ref{thm:milnor} to $G=\gl_n^+\re$. This computation will be used in the proof of Theorem \ref{thm:pont} in Section \ref{sec:proofs}.

\begin{cor}\label{cor:flatccs}
The kernel of the map 
\[j^*: H^*\big(B\emph{\gl}_n(\re);\q\big)\ra H^*\big(B\emph{\gl}_n(\re)^\de;\q\big)\]
is the algebra generated by the Pontryagin classes. 
\end{cor}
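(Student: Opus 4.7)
The plan is to apply Theorem \ref{thm:milnor} with $G=\gl_n(\re)$ and $G_\co=\gl_n(\co)$, which reduces the problem to identifying the positive-degree image of the map
\[\phi^*: H^*(B\gl_n(\co);\q)\ra H^*(B\gl_n(\re);\q)\]
induced by the inclusion $\gl_n(\re)\hra\gl_n(\co)$. Milnor's theorem as stated applies to connected semisimple groups, but the extension to reductive groups such as $\gl_n(\re)$ is standard (for example, by factoring out the contractible $\re_{>0}$ center of $\gl_n^+(\re)$ and taking $\z/2$-invariants for the component group).

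First, I would identify the relevant cohomology rings. By polar decomposition, $\gl_n(\re)$ and $\gl_n(\co)$ deformation retract onto their maximal compact subgroups $O_n$ and $U_n$, giving
\[H^*(B\gl_n(\re);\q)\cong\q[p_1,\ld,p_{[n/2]}]\quad\text{and}\quad H^*(B\gl_n(\co);\q)\cong\q[c_1,\ld,c_n].\]
Under these identifications, $\phi^*$ corresponds to the map $H^*(BU_n;\q)\ra H^*(BO_n;\q)$ induced by the standard complexification $O_n\hra U_n$ coming from $\re^n\sbs\co^n$.

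Next, I would compute $\phi^*$ on generators. For a real rank-$n$ vector bundle $E$, the class $\phi^* c_k$ is the $k$th Chern class of the complexification $E\ot_\re\co$. Two classical facts suffice: since $E\ot\co$ is isomorphic to its complex conjugate bundle (via the real structure on $E$), the odd Chern classes $c_{2k+1}(E\ot\co)$ are $2$-torsion and hence vanish rationally; and $c_{2k}(E\ot\co)=(-1)^k p_k(E)$, which is essentially the definition of the Pontryagin classes. Therefore $\phi^*$ kills the odd Chern classes and sends the even ones, up to sign, onto the Pontryagin classes, so $\im(\phi^*)=\q[p_1,\ld,p_{[n/2]}]$.

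Combining these with Theorem \ref{thm:milnor} shows that $\ker(j^*)$ is the ideal in $\q[p_1,\ld,p_{[n/2]}]$ generated by the positive-degree part of $\im(\phi^*)$, namely by the Pontryagin classes $p_1,\ld,p_{[n/2]}$ themselves, as claimed. The substantive input beyond Milnor's theorem is the classical relation $c_{2k}(E\ot\co)=(-1)^k p_k(E)$; the main subtlety is simply the bookkeeping needed to justify invoking the reductive version of Milnor's theorem.
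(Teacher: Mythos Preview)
Your proposal is correct and follows essentially the same route as the paper: apply Theorem~\ref{thm:milnor} and compute the image of the complexification map on Chern classes, obtaining $c_{2k}\mapsto \pm p_k$ and $c_{2k+1}\mapsto 0$. The only cosmetic difference is that the paper works with the identity component $\gl_n^+(\re)$, so that $H^*(B\gl_n^+\re;\q)$ also contains the Euler class (with $e^2=p_{n/2}$ when $n$ is even), whereas you work directly with $BO_n$ and see only the Pontryagin classes; this does not affect the conclusion.
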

By Corollary \ref{cor:flatccs}, the only interesting characteristic class of flat $\gl_n^+\re$-bundles is the Euler class (which is nonzero only for $n$ even); moreover, since $e^2=p_{n/2}$, the square of the Euler class is \emph{not} a characteristic class of flat $\gl_n^+\re$-bundles. 
\begin{proof}[Proof of Corollary \ref{cor:flatccs}]
The complexification of $G=\gl_{n}^+\re$ is $\gl_n\co$. By Theorem \ref{thm:milnor}, to compute the kernel of $H^*(BG)\ra H^*(BG^\de)$ it suffices to compute the image of $i^*:H^*(B\gl_n\co)\ra H^*(B\gl_n^+\re)$. From a theorem of Borel \cite{borel:toplie}, $H^*(B\gl_n\co)$ is a polynomial algebra $\q[c_1,\ld,c_n]$ in the Chern classes and $H^*(B\gl_n^+\re)$ is a quotient of $\q[p_1,\ld,p_{[n/2]}, e]$ by the ideal $(e)$ for $n$ odd and $(e^2-p_{n/2})$ for $n$ even. 

Now $i^*(c_{2i})=p_i$ holds tautologically because the Pontryagin classes of a real linear bundle are defined by taking Chern classes of the complexified bundle. In addition, $i^*(c_{2i-1})=0$ because the odd Chern classes of a complexified bundle vanish; see \cite{ms} for details. Thus the image of $i^*$ is precisely the algebra generated by the Pontryagin classes. 
\end{proof}

\section{Characteristic classes of fiberwise bordant bundles}\label{sec:bordant}

This section is devoted to the study of characteristic classes of fiberwise bordant $S^{n-1}$-bundles. Here is the main definition. 

\begin{deftn}
Two $F$-bundles $E_0\ra M$ and $E_1\ra M$ are \emph{fiberwise bordant} if there exists an $F\ti[0,1]$-bundle $E\ra M$ so that for $i=0,1$, the restriction in each fiber to $F\ti\{i\}$ determines a bundle isomorphic to $E_i\ra M$. The bundle $E$ is called a \emph{(fiberwise) bordism} between $E_0$ and $E_1$. 
\end{deftn}

We will prove that fiberwise bordant $S^{n-1}$-bundles have the same Euler and Pontryagin classes. We begin with the Euler class.

\begin{lem}\label{lem:sameeuler}
If $E_0\ra M$ and $E_1\ra M$ are fiberwise bordant $S^{n-1}$-bundles, then $E_0$ and $E_1$ have the same Euler class. 
\end{lem}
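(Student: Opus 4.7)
The plan is to apply the obstruction-theoretic definition of the Euler class directly to the bordism bundle $E\ra M$, and then use the natural fiberwise inclusions $\iota_i:E_i\hra E$ to conclude $e(E_0)=e(E)=e(E_1)$.

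The primary-obstruction construction recalled in Section \ref{sec:flat} works verbatim for any oriented fiber bundle whose fiber $F$ is $(n-2)$-connected with $\pi_{n-1}(F)\cong\z$. The bundle $E\ra M$ has fiber $S^{n-1}\ti[0,1]$, which is homotopy equivalent to $S^{n-1}$ and so satisfies these hypotheses; this yields a well-defined primary obstruction class $e(E)\in H^n(M;\z)$. Moreover, the definition of fiberwise bordism forces the structure group of $E$ to preserve each boundary $F\ti\{i\}$, so we have fiberwise inclusions $\iota_i:E_i\hra E$ that restrict on each fiber to the homotopy equivalence $S^{n-1}\ti\{i\}\hra S^{n-1}\ti[0,1]$.

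To show $e(E_0)=e(E)$, I would choose a section $\sigma_0:M^{(n-1)}\ra E_0$ (which exists since the fiber is $(n-2)$-connected) and push it forward via $\iota_0$ to a section $\iota_0\ci\sigma_0$ of $E$ over $M^{(n-1)}$. For each $n$-simplex $\De$, choose compatible trivializations realizing $E_0|_\De=\De\ti S^{n-1}\ti\{0\}\sbs\De\ti(S^{n-1}\ti[0,1])=E|_\De$. The obstruction values on $\De$ for $\sigma_0$ as a section of $E_0$ and of $E$ are the classes of $\sigma_0|_{\pa\De}:S^{n-1}\ra S^{n-1}$ in $\pi_{n-1}(S^{n-1})$ and in $\pi_{n-1}(S^{n-1}\ti[0,1])$, respectively. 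Under the canonical isomorphism $\pi_{n-1}(S^{n-1}\ti[0,1])\cong\pi_{n-1}(S^{n-1})$ these classes agree, so the obstruction cocycles coincide on the nose. Hence $e(E_0)=e(E)$, and the identical argument applied to $\iota_1$ gives $e(E_1)=e(E)$.

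The step I expect to require the most care is checking that $e(E)$ is genuinely well-defined as an invariant of $E$: namely that the obstruction cohomology class is independent of the choice of section over $M^{(n-1)}$ and that the coefficient system is the untwisted $\z$. Both follow from standard obstruction theory provided the structure group of $E$ preserves orientation on each boundary component, which is automatic here since $E_0$ and $E_1$ are oriented sphere bundles and the fibers of a bordism deformation retract onto either boundary.
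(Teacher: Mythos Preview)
Your proposal is correct and is essentially the same argument as the paper's. Both push sections of $E_i$ into the bordism $E$ via the boundary inclusions, observe that the resulting obstruction cocycles for $E$ are the images of the $e_i$ under the isomorphism $\pi_{n-1}(S^{n-1})\xra{\sim}\pi_{n-1}(S^{n-1}\ti[0,1])$, and then invoke section-independence of the primary obstruction class; you simply package the intermediate step as a class $e(E)$, whereas the paper compares $[e_0']$ and $[e_1']$ directly.
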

\begin{proof}
Let 
\[S^{n-1}\ti[0,1]\ra E\ra M\] be a bordism between $E_0\ra M$ and $E_1\ra M$. To compare the Euler classes, we use the obstruction theory definition of the Euler class given in Section \ref{sec:flat}. Let  
\[A=\pi_{n-1}(S^{n-1}\ti 0)\simeq\pi_{n-1}(S^{n-1}\ti 1),\] let $A'=\pi_{n-1}(S^{n-1}\ti[0,1])$, and let $\al: A\xra\sim A'$ be the isomorphism induced by the inclusion of a component of the boundary. For $i=0,1$ choose sections $\si_i: M^{(n-1)}\ra E_i$ defined over the $(n-1)$-skeleton of $M$. Obstruction theory gives simplicial cocycles $e_i:C_n(M)\ra A$. Our aim is to show that $[e_0]=[e_1]$ in $H^n(M;A)$. 

The inclusions $E_i\hra E$ induce sections $M^{(n-1)}\xra{\si_i}E_i\hra E$, and the associated obstruction cocycles are
\[e_i'=\al_*(e_i): C_n(M)\xra{e_i}A\xra\al A'.\]
Now $[e_0']=[e_1']$ in $H^n(M; A')$ because the obstruction class in cohomology is independent of the section. Then since $\al_*:H^*(M;A)\xra\sim H^*(M;A')$ is an isomorphism and $\al_*[e_i]=[e_i']$, it follows that $[e_0]=[e_1]$. 
\end{proof}

Next we show that fiberwise bordant $S^{n-1}$-bundles have the same Pontryagin classes. As discussed in Section \ref{sec:flat}, the Alexander trick defines a homomorphism $\homeo(S^{n-1})\ra\homeo(\re^n)$, so a $S^{n-1}$-bundle induces an $\re^n$-bundle. Two $\re^n$-bundles are \emph{stably isomorphic} if they become isomorphic after adding a trivial bundle. More precisely, if $f_0,f_1:M\ra B\homeo(\re^n)$ classify $\re^n$-bundles, then the bundles are stably isomorphic if $f_0$ and $f_1$ become homotopic after composing with $B\homeo(\re^n)\ra B\homeo(\re^{n+k})$ for some $k\ge0$. To show fiberwise bordant $S^{n-1}$-bundles have the same Pontryagin classes we show the induced $\re^n$-bundles are stably isomorphic. 

\begin{prop}\label{prop:stabequiv}
Let $E_0\ra M$ and $E_1\ra M$ be fiberwise bordant $S^{n-1}$-bundles. Then the induced $\re^n$-bundles are stably isomorphic.
\end{prop}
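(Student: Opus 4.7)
The plan is to use the bordism $E$ to realize both of the $\re^n$-bundles induced from $E_0$ and $E_1$ (via the Alexander trick) as complements of sections in a single $S^n$-bundle over $M$, and then to use one fiberwise suspension to homotope those sections together.

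First, I would form the $S^n$-bundle $\hat E\to M$ by collapsing each sphere subbundle $E_i\subset E$ of the bordism fiberwise to the base. Since the unreduced suspension of $S^{n-1}$ is $S^n$, the fiber of $\hat E$ is $S^n$, and the two collapsed subbundles determine two canonical sections $s_0,s_1\colon M\to\hat E$. Equivalently, $\hat E$ is the fiberwise union of two cones attached to the bordism along its two ends $E_0$ and $E_1$, and the cone points trace out $s_0$ and $s_1$.

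Next, I would identify each complement $\hat E\setminus s_i$ with the Alexander-trick $\re^n$-bundle associated to $E_{1-i}$, at least stably. The complement $\hat E\setminus s_i$ is an $\re^n$-bundle over $M$ with zero section $s_{1-i}$; the link of $s_{1-i}$ in each fiber is canonically $E_{1-i}$, and near $s_{1-i}$ the bundle structure is precisely the Alexander cone on $E_{1-i}$. The claim is that this local model extends to a global identification (possibly after one stabilization by $\op\re$), which relies on the fact that a topological $\re^n$-bundle with a distinguished zero section is classified, up to stable isomorphism, by the isomorphism class of its link sphere bundle.

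Finally, I would use one fiberwise suspension to conclude. Form $\Sigma\hat E\to M$, an $S^{n+1}$-bundle obtained by fiberwise suspension of $\hat E$; the original sections $s_0,s_1$ embed as sections in the equator subbundle. The vertical coordinate $t\in[-1,1]$ of the suspension gives a canonical homotopy through sections $(s_i(x),t)$ from $\Sigma s_i$ at $t=0$ to the north-pole section at $t=1$, so $\Sigma s_0$ and $\Sigma s_1$ are homotopic as sections of $\Sigma\hat E$. Any such homotopy through sections induces an isomorphism of $\re^{n+1}$-bundles $\Sigma\hat E\setminus\Sigma s_0\cong\Sigma\hat E\setminus\Sigma s_1$, which, combined with the identifications of the previous step, yields the desired stable isomorphism between the two induced $\re^n$-bundles.

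The main obstacle is the second step: rigorously identifying $\hat E\setminus s_i$ with the Alexander-trick model from $E_{1-i}$ up to stable isomorphism. The bordism can twist the bundle structure away from the zero section, and showing that this twist is absorbed after a single stabilization requires invoking the close relationship between $B\homeo(\re^n,0)$ and $B\homeo(S^{n-1})$ provided by the Alexander trick in the topological category.
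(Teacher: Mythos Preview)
Your outline captures a natural geometric picture---coning off the bordism to an $S^n$-bundle $\hat E$ with two distinguished sections is essentially how the paper begins---but it leaves the crux of the argument unresolved. Your Step 2, identifying the complement $\hat E\setminus s_i$ with the Alexander $\re^n$-bundle on $E_{1-i}$ even stably, is the whole difficulty: near the zero section the two agree, but the bordism contributes twisting away from it, and absorbing that twist is precisely what needs proof. The relationship between $B\homeo(\re^n,0)$ and $B\homeo(S^{n-1})$ you invoke is not a formality; making it precise requires input on the order of the Kister--Mazur theorem. And Step 3 does not rescue you: it shows $\Sigma\hat E\setminus s_0\cong\Sigma\hat E\setminus s_1$, but relating either of these $\re^{n+1}$-bundles back to the Alexander bundles on $E_0,E_1$ runs into the same obstacle again.

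The paper's proof (due to Randal-Williams) avoids this circle entirely by staying at the level of group homomorphisms. First, one upgrades the $S^{n-1}$-bordism to an $\re^n$-bordism: cone both boundaries to get a map $\homeo_\pa(S^{n-1}\ti[-1,1])\to\homeo(S^n)$, apply the Alexander trick once more to land in $\homeo(D^{n+1})\cong\homeo(D^n\ti[-1,1])$, and restrict to interiors. This homomorphism intertwines the boundary-restriction maps with the Alexander trick by construction, so the $\re^n$-bundles on the two ends \emph{are} the Alexander bundles on $E_0$ and $E_1$---no comparison of section-complements is needed. Second, one shows that fiberwise bordant $\re^n$-bundles become isomorphic after one stabilization by an elementary ``sliding'' homotopy: extend any $\psi\in\homeo_\pa(\re^n\ti[-1,1])$ to $\re^n\ti\re$ by its boundary values outside the interval, then translate the support interval off to $\pm\infty$. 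This interpolates, through homomorphisms into $\homeo(\re^n\ti\re)$, between $\psi_{-1}\ti\bbm1_\re$ and $\psi_{1}\ti\bbm1_\re$, which is exactly what is required.
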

Since adding a trivial bundle does not change the Pontryagin classes, the following corollary is immediate. 
\begin{cor}\label{cor:samepont}Fiberwise bordant $S^{n-1}$-bundles have the same Pontryagin classes.
\end{cor}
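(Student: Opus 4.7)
The plan is to prove the stronger statement that $\alpha_\ast(E_0) \oplus \underline{\re} \cong \alpha_\ast(E_1) \oplus \underline{\re}$ as $\re^{n+1}$-bundles, which implies stable isomorphism. The first step is the observation, verified by a direct calculation with the Alexander trick, that $\alpha(\Sigma f) = \alpha(f) \oplus \id_\re$ at the level of structure-group homomorphisms, so that $\alpha_\ast(E_i) \oplus \underline{\re}$ is canonically isomorphic to the Alexander extension $\alpha_\ast(\Sigma E_i)$ of the fiberwise unreduced suspension. Consequently, it suffices to exhibit an isomorphism of $S^n$-bundles $\Sigma E_0 \cong \Sigma E_1$ over $M$.

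To construct this isomorphism, I would use the bordism $E \to M$ to build a common $S^n$-bundle $\bar E$, defined by collapsing $E_0$ and $E_1$ fiberwise to two disjoint sections of $E$; each fiber becomes $(S^{n-1} \times I)/\partial \cong S^n$. At the level of structure groups, $\bar E$ is classified by the induced homomorphism $q_\ast \colon \homeo(S^{n-1} \times I; \partial) \to \homeo(S^n)$ on the quotient, whereas $\Sigma E_i$ is classified by $\Sigma \circ r_i$, where $r_i$ is restriction to the $i$-th boundary and $\Sigma \colon \homeo(S^{n-1}) \to \homeo(S^n)$ is the ordinary suspension. The remaining task is to produce a natural homotopy $q_\ast \simeq \Sigma \circ r_i$ of homomorphisms of topological groups; geometrically, the idea is to realize $S^{n-1} \times I$ as an equatorial belt in $S^n$, cap the two polar disks using the Alexander cones on $f_0 = r_0(f)$ and $f_1 = r_1(f)$, and then use a radial isotopy along the suspension axis to interpolate between the packaged extension and each $\Sigma f_i$.

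The main obstacle is naturality in $f$: because $\homeo(S^{n-1} \times I; \partial)$ does not preserve the intermediate slices $S^{n-1} \times \{t\}$, the pointwise isotopies between $q_\ast(f)$ and $\Sigma r_i(f)$ must be assembled into a genuine homotopy of classifying maps $B\homeo(S^{n-1} \times I; \partial) \to B\homeo(S^n)$. This naturality rests on the contractibility of $\homeo(D^n)$ underlying the Alexander trick, and is the substance of the simplification due to Randal-Williams cited in the acknowledgements. Once $\Sigma E_0 \cong \Sigma E_1$ is established, Corollary \ref{cor:samepont} is immediate, since Pontryagin classes are invariants of stable isomorphism of $\re^n$-bundles.
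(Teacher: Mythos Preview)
Your proposal is essentially the same strategy as the paper's proof of Proposition \ref{prop:stabequiv} (from which the corollary is stated to be immediate): establish that the induced Euclidean bundles become isomorphic after adding a trivial line, using the Alexander trick. The packaging differs slightly. You pass to fiberwise suspensions and aim for an isomorphism of $S^n$-bundles $\Sigma E_0\cong \bar E\cong \Sigma E_1$, then invoke $\al(\Sigma f)=\al(f)\oplus\id_\re$; the paper instead first cones the $(S^{n-1}\times I)$-bordism to a $(\re^n\times I)$-bordism (this is exactly your ``equatorial belt with Alexander caps'' map, followed by one more cone), and then runs the sliding homotopy $\ep_\tau$ in $\re^n\times\re$ rather than on $S^n$. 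The non-compact setting buys a cleaner endgame: in the compact--open topology on $\homeo(\re^n\times\re)$ the translated belt literally disappears at infinity, so $\ep_\tau$ extends continuously to $\tau=\pm\infty$ and gives a genuine homotopy \emph{through homomorphisms}. Your ``radial isotopy along the suspension axis'' is the $S^n$-analogue, and it also works, but the convergence at the pole needs the extra observation that both maps carry a shrinking polar cap into itself, hence are uniformly close there.

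One small correction: the contractibility you need is that of $\homeo(D^n\text{ rel }\pa D^n)$, not of $\homeo(D^n)$ (the latter deformation retracts to $\homeo(S^{n-1})$, which is not contractible). With that fix, your alternative justification---that $B\homeo(D^n)\to B\homeo(S^{n-1})$ is a weak equivalence, so $E\cup_{E_1}C(E_1)$ and $C(E_0)$ are isomorphic $D^n$-bundles rel their common boundary $E_0$---is a perfectly good replacement for the sliding step, and arguably more conceptual than the explicit $\ep_\tau$. Either way you arrive at the same conclusion as the paper.
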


The proof of Proposition \ref{prop:stabequiv} given below is due to O.\ Randal-Williams and replaces a longer argument involving block bundles that appeared in an earlier version of this paper.  

\begin{proof}[Proof of Proposition \ref{prop:stabequiv}]\mbox{ }

\vspace{.1in}
\noindent\un{Step 1.} If $E_0$ and $E_1$ are fiberwise bordant $S^{n-1}$-bundles, then the induced $\re^n$-bundles are also fiberwise bordant. Let $\homeo_\pa\big(S^{n-1}\ti[-1,1]\big)$ denote the group of homeomorphisms of $S^{n-1}\ti[-1,1]$ that preserve (i.e.\ do not swap) the boundary components $S^{n-1}\ti\{-1\}$ and $S^{n-1}\ti\{1\}$. 

For $\vp\in\homeo_\pa\big(S^{n-1}\ti[-1,1]\big)$, let $\vp_{-1}, \vp_1:S^{n-1}\ra S^{n-1}$ be the induced homeomorphisms of the boundary components. By the Alexander trick $\vp_{-1},\vp_1$ can be coned off to homeomorphisms of $\bb D^n$. Coning the boundary components of $S^{n-1}\ti[-1,1]$ defines a homeomorphism of 
\[\big(\bb D^n\ti\{-1\}\big)\cup \big(S^{n-1}\ti[-1,1]\big)\cup\big(\bb D^n\ti\{1\}\big)\simeq S^n,\]
and after applying the Alexander trick once more, we obtain a homeomorphism of $\bb D^{n+1}\simeq\bb D^n\ti[-1,1]$. Ultimately this defines a homomorphism 
\[\homeo_\pa(S^{n-1}\ti[-1,1])\ra\homeo_\pa(\bb D^n\ti[-1,1])\ra\homeo_\pa(\re^n\ti[-1,1]).\]
The second homomorphism is induced by the inclusion $\re^n\hra\bb D^n$ as the interior of the closed disk. 

\vspace{.1in}
\noindent\un{Step 2.} Fiberwise bordant $\re^n$-bundles are isomorphic after adding a trivial bundle. We have two homomorphisms 
\[
\ep_{+},\ep_-:\homeo_\pa(\re^n\ti[-1,1])\rightrightarrows\homeo(\re^n)\xra{(\cdot)\ti\bbm1_\re}\homeo(\re^n\ti\re).\]
The first pair of homomorphisms is the restriction to either of the boundary components. To show that fiberwise bordant bundles are isomorphic after adding a trivial bundle it suffices to show that $\ep_+$ and $\ep_-$ are homotopic (through homomorphisms). We argue this as follows. 

Given $\vp\in\homeo_\pa(\re^n\ti[-1,1])$, let
\[\ep_0:\homeo_\pa(\re^n\ti[-1,1])\ra\homeo(\re^n\ti\re)\]
be the homomorphism that extends by $\vp_{-1}$ on $\re^n\ti(-\infty,-1]$ and by $\vp_1$ on $\re^n\ti[1,\infty)$. For any $\tau\in(-\infty,\infty)$, there is a similar homomorphism
\[\ep_\tau:\homeo_\pa(\re^n\ti[-1,1])\ra\homeo(\re^n\ti\re)\]
that performs $\vp$ in the interval $\re^n\ti [\tau,\tau+1]$ and extends to $\re^n\ti\re$ in the obvious way. Taking $\tau\ra\infty$ one obtains the homomorphism $\ep_+$, and taking $\tau\ra-\infty$ one obtains $\ep_-$. Since the homomorphisms $\ep_\tau$ are homotopic through homomorphisms, one concludes that $\ep_+$ and $\ep_-$ are homotopic. 
\end{proof}

\section{Arithmetic lattices, superrigidity, and Type (2) manifolds}\label{sec:res}

The proof of Theorem \ref{thm:main} uses superrigidity. We recall the statement here, and then describe how to determine if a manifold has Type (2a) or (2b). For more information, see \cite{witte-morris}. 
\begin{thm}[Margulis superrigidity]\label{thm:margulis}
Let $G$ be a connected, real, linear semisimple Lie group with $\re$-\emph{rank}$(G)\ge2$ and let $\Ga\sbs G$ be an irreducible lattice. Let $\psi:\Ga\ra\emph{\gl}_n(\re)$ be any homomorphism. Assume that the complexification of $G$ is simply connected, $G$ has no compact factors, and the Zariski closure of $\psi(\Ga)$ has no compact factors. Then there exists a continuous homomorphisms $\what \psi:G\ra\emph{\gl}_n(\re)$ that agrees with $\psi$ on a finite index subgroup of $\Ga$.
\end{thm}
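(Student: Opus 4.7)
The plan is to follow Margulis's original proof via Furstenberg boundary maps, since a direct construction of $\what\psi$ on $G$ from a representation of $\Ga$ has to ultimately come from an algebraic/rational structure on some $\Ga$-equivariant object. Let $H$ denote the Zariski closure of $\psi(\Ga)$ in $\gl_n(\re)$; by hypothesis $H$ has no compact factors, and we may assume $H$ is semisimple (modding out by the solvable radical reduces to the semisimple case via a separate argument). The central goal is to produce a rational $\Ga$-equivariant map between flag varieties of $G$ and $H$, and then to algebraize it to an extension of $\psi$.

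First I would choose a minimal parabolic $P\sbs G$ and a parabolic $Q\sbs H$, and construct a measurable $\Ga$-equivariant map $\phi:G/P\ra H/Q$. This is done using Furstenberg's theorem: $(G/P,\nu)$ is a $(G,\mu)$-boundary for any admissible probability measure $\mu$ on $G$, and restricting $\mu$ to $\Ga$ one obtains a stationary measure on $H/Q$; the existence of such a $\phi$ is then a standard consequence of the Furstenberg--Poisson boundary formalism together with amenability of $P$. The higher rank hypothesis $\re$-rank$(G)\ge2$ is essential here because it is used (via Moore ergodicity) to ensure the boundary map has strong equivariance properties with respect to the action of a maximal $\re$-split torus $A\sbs G$.

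The heart of the proof, and the main obstacle, is the rationality step: showing that $\phi$ agrees almost everywhere with a rational morphism of algebraic varieties. The standard route is to combine multiplicative ergodic theory (Oseledets) with the $A$-equivariance of $\phi$ to force the graph of $\phi$ to lie in a proper algebraic subvariety of $G/P\ti H/Q$; then one analyzes the projection of this subvariety and uses the fact that $\Ga$ is Zariski-dense in $G$ (by Borel density) to conclude that $\phi$ is essentially an algebraic morphism. This is where the $\re$-rank $\ge 2$ hypothesis is used most crucially: in rank one there is no $A$-action of the required type, and indeed the theorem is false there.

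Finally, once $\phi$ is algebraic, one promotes it to a homomorphism $\what\psi$. The rational $\Ga$-equivariant map $\phi$ extends to a $G$-equivariant rational map (because $\Ga$ is Zariski-dense in $G$), and the stabilizers of generic points give rise to an algebraic homomorphism $G\ra H\sbs\gl_n(\re)$ whose restriction to $\Ga$ agrees with $\psi$ on a finite-index subgroup. The hypothesis that the complexification $G_\co$ is simply connected is used at this final step to guarantee that the resulting algebraic homomorphism actually lifts to $G$ rather than to a finite cover, and the no-compact-factors assumption on $G$ is used to rule out trivial twistings by finite-order characters. Since this is the classical Margulis superrigidity theorem, I would ultimately cite a reference such as Margulis or Witte Morris rather than reproduce the details in full.
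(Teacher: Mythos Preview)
The paper does not prove this theorem. It is stated as a classical result and attributed to Margulis, with a pointer to \cite{witte-morris} for details; no argument is given in the paper itself. So there is nothing to compare your proposal against beyond your own final sentence, which is precisely what the paper does: cite a reference and move on.

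That said, your outline of the classical boundary-map proof is broadly accurate in its architecture (Furstenberg boundary map $G/P\to H/Q$, rationality via higher-rank dynamics, algebraization via Borel density). A couple of places are a bit loose: the reduction ``modding out by the solvable radical reduces to the semisimple case via a separate argument'' hides real work (one has to control the unipotent radical and use that finitely generated higher-rank lattices have finite abelianization), and the description of the rationality step conflates several distinct ingredients (the $A$-ergodicity arguments and the proper-subvariety argument are more subtle than the one-sentence summary suggests). But since the paper treats this as a black box, your instinct to cite Margulis or Witte Morris rather than reproduce the proof is exactly right and matches the paper.
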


The assumption that the complexification $G_\co$ is simply connected can be eliminated by passing to a finite cover $\what G\ra G$. Note that if $\Ga\sbs G$ is a lattice, then there is a finite index subgroup $\Lambda\sbs\Ga$ that is isomorphic to a lattice in $\what G$ (this follows from the fact that $\Ga$ is residually finite). For this reason, passing to the finite cover $\what G$ will not affect our arguments in Section \ref{sec:proofs}. 

The key condition for applying Theorem \ref{thm:margulis} to a higher rank lattice is that the Zariski closure $\ov{\psi(\Ga)}$ has no compact factors. If $\Ga$ is nonuniform, this condition holds for \emph{every} representation (as mentioned in the introduction). 

Let $M=\Ga\bs G/K$ be a Type (2a) manifold, and let $\psi:\Ga\ra\gl_n(\re)$ be a representation. It follows from the definition of Type (2a) that $\ov{\psi(\Ga)}$ has no compact factors, so every representation of $\Ga$ virtually extends. If $M$ has Type (2b), then some representations of $\Ga$ will extend to $G$ and some will not. 

\vspace{.1in} 

\noindent{\bf Type (2a) versus Type (2b).} We now will use the restriction of scalars construction to describe how to determine if a Type (2) locally symmetric manifold has Type (2a) or (2b). Let $F$ be a number field and let $H\sbs\Sl_N(\re)$ be a subgroup defined over $F$; let $S_\infty$ be the real and complex places of $F$, and let $\ca O$ be the ring of integers of $F$. Let $H_{\ca O}$ be the matrices in $H$ with entries in $\ca O$. After choosing an embedding for each place, consider the diagonal embedding $\De:F\ra \bigoplus\re\op\bigoplus\co$, which defines an embedding 
\[\De:H_{\ca O}\ra \prod_{\si\in S_\infty} H^\si.\] Then $H_{\ca O}$ is a lattice in $\prod H^\si$. In fact, for any arithmetic lattice $\Ga\sbs G$, there exists $F$ and $H$ and a surjection $\prod H^\si\ra G$ with compact kernel so that the projection of $\De(H_{\ca O})$ is $\Ga$ (up to commensurability). 

If $\Ga$ is obtained by restriction of scalars with $F$ and $H$ as above, then $\Ga$ has a map to a compact group with infinite image if and only if $H^\si$ is compact for some embedding $\si:F\ra\co$. Furthermore, if $\Ga$ maps to a compact group $\Ga\ra U$ with infinite image, then the Zariski-closure of the image is a factor of $\prod H^\si$ (see \cite{Margulis} 7.6.1 on Page 243). 

When $G$ is simple, there exists an embedding $\si$ with $H^\si$ compact if and only if $F$ is a nontrivial extension of $\q$. For explicit examples see \cite{witte-morris}.

\section{Main construction}\label{sec:genproc}

In this section we show that if Push is realized by diffeomorphisms, then $TM$ has the same Euler and Pontryagin classes as a bundle with a flat $\gl_n(\re)$ connection. 

To show this, we generalize a construction from \cite{bcs} to $\dim M>2$. If Push is realized by diffeomorphisms, then the action of $\pi_1(M)$ on $(M,*)$ can be lifted to an action on the universal cover $\wtil M$ with a global fixed point $\wtil *$. This action extends to the visual boundary $\pa\wtil M$. By blowing up $\wtil M$ at $\wtil *$, we obtain an action of $\pi_1(M)$ on $S^{n-1}\ti[0,1]$, which defines a fiberwise bordism between two $S^{n-1}$-bundles. By the work of Section \ref{sec:bordant}, these two bundles have the same Euler and Pontryagin classes. One of these $S^{n-1}$-bundles is the unit tangent bundle (induced from the action of $\pi_1(M)$ on $\pa\wtil M$). The other $S^{n-1}$-bundle has a flat $\gl_n(\re)$ connection (it is induced from the action of $\pi_1(M)$ on the tangent space of $\wtil *$). Thus $TM$ has the same Pontryagin classes as a bundle with flat $\gl_n(\re)$ connection. 

Let $M$ be a complete Riemannian manifold with nonpositive sectional curvature, and let $*\in M$ be a basepoint. Set $\Ga=\pi_1(M,*)$.  Let $p:\wtil M\ra M$ be the universal cover and choose a basepoint $\wtil*\in p^{-1}(*)$. Any diffeomorphism of $M$ can be lifted to $\wtil M$; in fact there are many lifts because any lift can be composed with a deck transformation of $\wtil M$ to get another lift. This is expressed by the following short exact sequence
\[1\ra \pi_1(M)\xra{i} \diff(\wtil M)^{\pi_1(M)}\ra \diff(M)\ra1.\]
Here $i:\pi_1(M)\ra\diff(\wtil M)$ is the action by deck transformations; the middle term denotes the normalizer of the deck group and is the group of lifts of diffeomorphisms of $M$. In general, this sequence has no section; however, when restricted to $\diff(M,*)\sbs\diff(M)$ the sequence splits: define a section 
\[\si:\diff(M,*)\ra\diff(\wtil M)^{\pi_1(M)}\] by choosing $\si(f)$ to be the unique lift that fixes $\wtil *$ (choose any lift and post-compose with the appropriate deck transformation). Now suppose, for a contradiction, that there exists a lift $\vp:\Ga\ra\diff(M,*)$ of Push. Composing with $\si$ gives a homomorphism
\[\si\circ\vp: \Ga\ra\diff(\wtil M,\wtil *)^{\pi_1(M)}\]
and hence an action of $\Ga$ on $\wtil M$ with a global fixed-point. This action induces two more actions:
\begin{enumerate}
\item[(i)] $\rho_0:\Ga\ra\homeo(S^{n-1})$ is the action on the unit tangent space at the fixed point $T^1_{\wtil *}\wtil M\simeq S^{n-1}$, 
\item[(ii)] $\rho_1:\Ga\ra\homeo(S^{n-1})$ is the action on the visual boundary $\pa \wtil M\simeq S^{n-1}$ as described in Lemma \ref{lem:boundary}.
\end{enumerate}

\begin{lem}\label{lem:boundary}
Let $\ga\in\pi_1(M)$ and let $\text{\emph{Push}}(\ga)\in\pi_0\diff(M,*)$. For any diffeomorphisms $f\in\emph{\diff}(M,*)$ representing $\text{\emph{Push}}(\ga)$, the lifted diffeomorphism $\si(f)$ extends to the boundary $\pa\wtil M$ and acts on the boundary as the deck transformation $i(\ga)$.  
\end{lem}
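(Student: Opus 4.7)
My plan is to reduce the statement to the canonical flow-built representative $f_\ga$ of $\text{Push}(\ga)$, where the boundary action can be read off directly, and to show that the passage from $f_\ga$ to an arbitrary representative $f$ is absorbed into a bounded-displacement error invisible on $\pa\wtil M$. Throughout I use two facts: (a) $\si:\diff(M,*)\ra\diff(\wtil M,\wtil *)$ is a group homomorphism, since for any $\vp,\psi\in\diff(M,*)$ both $\si(\vp)\si(\psi)$ and $\si(\vp\psi)$ lift $\vp\psi$ and fix $\wtil *$, and such a lift is unique; and (b) in a Hadamard manifold, two continuous self-maps at globally bounded distance extend to the same continuous map on the visual boundary.

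For the canonical representative, take $f_\ga$ to be the time-$2\pi$ map of the flow of a vector field $X$ on $M$ supported in a tubular neighborhood of $\ga$ and extending the positive unit tangent along $\ga$. Lift $X$ to a $\Ga$-invariant vector field $\wtil X$ on $\wtil M$: its support is a locally finite disjoint union of tubular neighborhoods of the lifts of $\ga$, the flow is trivial outside this support, and inside each tube the time-$2\pi$ map shifts along the lifted geodesic by at most $\text{length}(\ga)$. Integrating yields a lift $\wtil f_{2\pi}$ of $f_\ga$ with $\wtil f_{2\pi}(\wtil *)$ equal to the endpoint of the $\wtil*$-based lift of $\ga$, and with $d(\wtil f_{2\pi}(x),x) \le \text{length}(\ga)$ for every $x \in \wtil M$. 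The basepoint-fixing lift $\si(f_\ga)$ is then obtained from $\wtil f_{2\pi}$ by composition with the unique deck transformation correcting its value at $\wtil *$; in the conventions of the paper this exhibits $\si(f_\ga)$ as a bounded distance from the deck isometry $i(\ga)$ on all of $\wtil M$, so by (b) it extends continuously to $\pa\wtil M$ and coincides there with $i(\ga)$.

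For a general representative $f$, the product $f\cdot f_\ga^{-1}$ lies in the identity component of $\diff(M,*)$, hence is connected to $\id$ by an isotopy $g_t\in\diff(M,*)$. Path-lifting yields $\wtil g_t\in\diff(\wtil M,\wtil *)$ starting at $\id$, with $\wtil g_1=\si(f\cdot f_\ga^{-1})$ by (a). When $M$ is compact (the principal case here), the vector field generating $g_t$ is uniformly bounded, so $\wtil g_1$ stays at bounded distance from $\id$ on $\wtil M$ and by (b) extends to the identity on $\pa\wtil M$. Combining this with (a) and the previous paragraph, $\si(f)$ extends to $i(\ga)$ on $\pa\wtil M$, proving the lemma.

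The main technical obstacle is the bounded-displacement estimate for $\wtil g_1$ when $M$ is non-compact: a general isotopy in $\diff(M,*)$ may be generated by an unbounded time-dependent vector field whose lift to $\wtil M$ does not stay within bounded distance of $\id_{\wtil M}$. For the applications in Section \ref{sec:genproc} this is unproblematic, as one applies the lemma only to the explicit $f_\ga$ rather than an arbitrary representative; in the non-compact setting in general, one would first isotope $f\cdot f_\ga^{-1}$ to a compactly supported diffeomorphism before lifting.
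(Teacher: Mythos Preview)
Your argument is correct and rests on the same mechanism as the paper's proof: exhibit $\si(f)$ as the deck transformation $i(\ga)$ composed with a lift of bounded displacement, and then use that bounded-displacement maps of a Hadamard manifold extend to the identity on the visual boundary. The paper carries this out in a single stroke rather than two: it chooses one isotopy $f_t$ in $\diff(M)$ from the identity to $f$ (such an isotopy exists because any representative of $\text{Push}(\ga)$ is isotopic to the identity in $\diff(M)$, and it may be chosen so that $t\mapsto f_t(*)$ traces $\ga$), lifts it starting at $\id_{\wtil M}$, and obtains $\si(f)=i(\ga)\circ\wtil f$ with $\wtil f$ of bounded displacement. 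Your decomposition $f=(f\cdot f_\ga^{-1})\cdot f_\ga$ and separate treatment of the two factors is logically equivalent but adds an unnecessary layer.

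One correction to your closing paragraph: you assert that in Section~\ref{sec:genproc} the lemma is applied only to the canonical representative $f_\ga$, so that the bounded-displacement issue for non-compact $M$ can be sidestepped. This is not the case. The lemma is invoked for $f=\vp(\ga)$, where $\vp:\Ga\ra\diff(M,*)$ is the hypothetical lift of Push; this $\vp(\ga)$ is an \emph{arbitrary} representative of $\text{Push}(\ga)$, not the explicit flow-built one. So the bounded-displacement estimate for general representatives is genuinely needed for the applications. You are right that the non-compact case deserves care (the paper's own phrase ``moves points a uniformly bounded amount'' elides exactly this point), but restricting attention to $f_\ga$ is not an available remedy here.
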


\noindent{\it Remark.} For $M$ a closed surface, Nielsen constructed a homomorphism 
\[\rho:\pi_0\diff(M,*)\ra\homeo(S^{1}),\] 
and the restriction of $\rho$ to the point-pushing subgroup is the representation $\rho_1$ above. This is explained in \cite{fm} in Sections 8 and 5.5.4. 

\begin{proof}[Proof of Lemma \ref{lem:boundary}]
Choose an isotopy $f_t$ from the identity to $f$. This isotopy can be lifted to $\wtil M$ to an isotopy from the identity of $\wtil M$ to a map $\wtil f$ covering $f$; along this isotopy, the basepoint $\wtil *$ is moved to $i(\ga)^{-1}(\wtil *)$ (action of the deck group). In other words, $\wtil f(\wtil *)=i(\ga)^{-1}(\wtil *)$. Recall that $\si(f)$ is defined as the unique lift of $f$ that fixes $\wtil *$; therefore, \[\si(f)=i(\ga)\circ\wtil f.\] Note that $\wtil f$ moves points a uniformly bounded amount, and so $\wtil f$ extends to $\wtil M\cup\>\pa\wtil M$ and acts trivially on $\pa\wtil M$. Hence $\si(f)=i(\ga)\circ\wtil f$ extends to $\pa\wtil M$ and acts on $\pa\wtil M$ as $i(\ga)$.
\end{proof}

Our goal is to understand the relationship between the two actions $\rho_0$ and $\rho_1$. We will do this using bundle theory. Since $\Ga\simeq\pi_1(M)$, the homomorphisms $\rho_0$ and $\rho_1$ induce $S^{n-1}$-bundles with flat $\homeo(S^{n-1})$ connections
\begin{equation}\label{eqn:bundles} E_0\ra M\>\>\>\>\>\text{ and }\>\>\>\>\> E_1\ra M.\end{equation}
Furthermore, the two bundles $E_0,E_1$ are fiberwise bordant. To show this we use the following lemma. 

\begin{lem}\label{lem:compactify}
Let $\bb D^n$ denote the closed disk. Let $\Lambda\sbs\emph{\homeo}(\bb D^n)$ be the subgroup of homeomorphisms that fix the origin $0\in\bb D^n$ and are differentiable at $0$. Then there exists an action of $\Lambda$ on $[0,1]\ti S^{n-1}$ such that the restriction to $\{1\}\ti S^{n-1}$ is the $\Lambda$-action on $\pa\bb D^n$ and the restriction to $\{0\}\ti S^{n-1}$ is the $\Lambda$-action on $T_0^1\bb D^n$. 
\end{lem}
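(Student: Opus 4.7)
The plan is to realize $[0,1]\times S^{n-1}$ as the real blow-up of $\bb D^n$ at the origin and to lift each $f\in\Lambda$ to a homeomorphism of this blow-up in a manner compatible with composition. I would use the collapse map $\pi:[0,1]\times S^{n-1}\to\bb D^n$, $\pi(r,v)=rv$, which is a homeomorphism off $\{0\}\times S^{n-1}$ and identifies $\{1\}\times S^{n-1}$ with $\pa\bb D^n$. Given $f\in\Lambda$, define
\[
\wtil f(r,v)=\begin{cases}\bigl(|f(rv)|,\>f(rv)/|f(rv)|\bigr)&r>0,\\ \bigl(0,\>df_0(v)/|df_0(v)|\bigr)&r=0.\end{cases}
\]
Since $f$ is injective with $f(0)=0$, the denominator in the first line never vanishes for $r>0$; the second line is well defined provided $df_0$ is invertible, which one must build into $\Lambda$ for it to be a group (otherwise $f^{-1}$ need not be differentiable at $0$, as $x\mapsto x^3$ on $[-1,1]$ illustrates).

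The heart of the argument is continuity of $\wtil f$ across the exceptional sphere $\{0\}\times S^{n-1}$. I would use differentiability in the form $f(x)=df_0(x)+o(|x|)$: for every $\ep>0$ there exists $\de>0$ with $|f(x)-df_0(x)|<\ep|x|$ whenever $|x|<\de$. Crucially, this bound is uniform in the direction $x/|x|$. Setting $x=rv$ with $v\in S^{n-1}$ gives $f(rv)/r\to df_0(v)$ uniformly in $v$, so if $(r_k,v_k)\to(0,v)$ then $|f(r_k v_k)|\to 0$ and $f(r_k v_k)/|f(r_k v_k)|\to df_0(v)/|df_0(v)|$, matching the $r=0$ formula. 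Continuity for $r>0$ is immediate from continuity of $f$.

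The remaining verifications are essentially formal. For $r>0$ the identity $\wtil{fg}=\wtil f\circ\wtil g$ is a direct substitution; for $r=0$ it reduces to the chain rule $d(fg)_0=df_0\circ dg_0$ combined with the fact that $u\mapsto u/|u|$ depends only on the ray spanned by $u$, so one may normalize $dg_0(v)$ either before or after applying $df_0$. Any homeomorphism of $\bb D^n$ carries $\pa\bb D^n$ to itself by invariance of domain, hence $|f(v)|=1$ for $v\in\pa\bb D^n$ and $\wtil f(1,v)=(1,f(v))$ recovers the action on $\pa\bb D^n$, while by construction $\wtil f(0,v)=(0,df_0(v)/|df_0(v)|)$ is the action on the unit tangent sphere $T_0^1\bb D^n$. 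The one subtle point is the continuity across the exceptional sphere addressed in the previous paragraph: one must know that the remainder in $f(x)=df_0(x)+o(|x|)$ is uniform over $v\in S^{n-1}$, but this uniformity is automatic from the definition of differentiability at a single point.
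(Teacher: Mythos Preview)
Your proof is correct and follows essentially the same blow-up construction as the paper: the collapse map $\pi(r,v)=rv$ and the lift $\wtil f=\pi^{-1}\circ f\circ\pi$ for $r>0$, extended by the derivative action on the exceptional sphere, are exactly what the paper uses. You have in fact supplied more detail than the paper does, carefully verifying continuity across $\{0\}\times S^{n-1}$ via the uniform $o(|x|)$ remainder and noting the implicit requirement that $df_0$ be invertible, points the paper leaves as an ``easy exercise.''
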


The proof of Lemma \ref{lem:compactify} uses the standard blow up construction. We prove the lemma at the end of this section. Let us explain why Lemma \ref{lem:compactify} implies that $E_0\ra M$ and $E_1\ra M$ are fiberwise bordant. Note that the compactification $X=\wtil M\cup\pa\wtil M$ is diffeomorphic to the closed disk $\bb D^n$ because $M$ has nonpositive curvature. Assuming Push is realized, the induced action of $\Ga$ on $\wtil M$ gives an action on $\bb D^n$ that is smooth on the interior and has global fixed point 0. This defines a homomorphism $\Ga\ra\Lambda\sbs\homeo(\bb D^n)$, so by Lemma \ref{lem:compactify}, $\Ga$ acts on $S^{n-1}\ti[0,1]$ so that the restriction to $S^{n-1}\ti\{0\}$ is the $\Ga$-action on $T_{\wtil *}^1\wtil M$ and the restriction to $S^{n-1}\ti\{1\}$ is the $\Ga$-action on $\pa\wtil M$. Define $E\ra M$ to be the $S^{n-1}\ti[0,1]$-bundle induced by the action of $\Ga$ on $S^{n-1}\ti[0,1]$. Then $E$ is a bordism between $E_0\ra M$ and $E_1\ra M$. 

Let $G=\isom(\wtil M)$. Note that the structure group of $E_1$---the bundle whose monodromy is the $\Ga$-action on $\pa\wtil M$---is contained in $G\sbs\homeo(S^{n-1})$ because $\Ga$ acts as the deck group on $\pa\wtil M$ and the deck group action on $\pa\wtil M$ extends to $G$. Similarly, the structure group of $E_0$---the bundle induced by the action of $\Ga$ on $T^1_{\wtil *}\wtil M$---is contained in the image of $\gl_n^+(\re)\ra\homeo(S^{n-1})$ because $\Ga$ acts on $T_{\wtil *}\wtil M$ linearly. In other words, the representations $\rho_0$ and $\rho_1$ factor:
\begin{equation*}\label{eqn:rho0}
\rho_0:\Ga\ra\gl_n^+(\re)^\de\ra\gl_n^+(\re)\ra\homeo(S^{n-1})\end{equation*}
and 
\begin{equation*}\label{eqn:rho1}\rho_1:\Ga\ra G^\de\ra G\ra \homeo(S^{n-1}).\end{equation*}
These maps produce Diagram \ref{diag:maincoho}. By Lemma \ref{lem:sameeuler} and Corollary \ref{cor:samepont}, $E_0\ra M$ and $E_1\ra M$ have the same Euler and Pontryagin classes, and so Diagram \ref{diag:maincoho} commutes on the Euler and Pontryagin classes in $H^*\big(B\homeo(S^{n-1})\big)$. Then to show that Push is not realized, it suffices to show that Diagram \ref{diag:maincoho} does not commute on cohomology. This is done in Section \ref{sec:proofs} for manifolds of Type (1) and (2a). 

\begin{proof}[Proof of Lemma \ref{lem:compactify}]
Give $[0,1]\ti S^{n-1}$ coordinates $(t,\ta)$. Viewing a point $\ta\in S^{n-1}$ as a unit vector in $\re^n$, define 
\[\begin{array}{lcll}
\pi:&[0,1]\ti S^{n-1}&\ra&\bb D^n\\
&(t,\ta)&\mapsto &t\ta.\end{array}\]
When restricted to $(0,1]\ti S^{n-1}$ this map is a diffeomorphism onto its image $\bb D^n\bs\{0\}$. Identify $\{0\}\ti S^{n-1}$ with the space of rays through the origin in $T_0\bb D^n$. Note that $\Lambda$ acts on this space because $\Lambda$ acts differentiably at 0. Now for $f\in \Lambda$, define $\wtil f:S^{n-1}\ti[0,1]\ra S^{n-1}\ti[0,1]$ by 
\[\wtil f(t,\ta)=\left\{\begin{array}{cll}
\pi^{-1}\circ f\circ\pi(t,\ta)&t>0\\[2mm]
(0,df_0(\ta))&t=0\end{array}\right.\]
It is an easy exercise to show that $\wtil f$ is a homeomorphism. It is obvious that $\wtil f$ restricts to $\{0,1\}\ti S^{n-1}$ as desired. Then $f\mapsto\wtil f$ defines the desired homomorphism $\Lambda\ra\homeo\big([0,1]\ti S^{n-1}\big)$. 
\end{proof}

\section{Proof of the main results}\label{sec:proofs}

In this section we give proofs of Theorems \ref{thm:pont}, \ref{thm:main}, and \ref{thm:surfprod}. We begin with the easiest of our results. 

\subsection{Proof of Theorem \ref{thm:surfprod}} \label{sec:ppsurf}

In this subsection, let $M$ be a product of hyperbolic surfaces, or more generally any compact quotient of $\hy^2\ti\cd\ti\hy^2$. We will use the following generalization of Milnor's inequality to show that $\text{Push}:\Ga\ra \pi_0\diff(M,*)$ is not realized by diffeomorphisms.

\begin{thm}[Bucher-Gelander \cite{bg}]\label{thm:bg}
Let $M$ be a compact quotient of $\hy^2\ti\cd\ti\hy^2$ $(k$ times$)$. Let $E\ra M$ be a flat $\emph{\gl}_{2k}^+(\re)$-bundle over $M$. Then 
\[|\eu(E)|\le \fr{1}{2^k}|\eu(TM)|.\]
\end{thm}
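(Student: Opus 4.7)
The plan is to work in bounded cohomology. Write $\rho:\pi_1(M)\ra\gl_{2k}^+(\re)^\de$ for the holonomy of $E$; the Euler class $\eu(E)\in H^{2k}(M;\re)$ admits a canonical bounded refinement $\eu_b(E)\in H^{2k}_b(M;\re)$, and Gromov duality between bounded cohomology and $\ell^1$-simplicial homology gives
\[ |\eu(E)|\;=\;|\lan\eu_b(E),[M]\ran|\;\le\;\|\eu_b(E)\|_\infty\cdot\|M\|_\De, \]
where $\|M\|_\De$ denotes Gromov's simplicial volume. The task is to bound the two norms on the right so that their product is at most $|\chi(M)|/2^k$ (using $\eu(TM)=\chi(M)$ by Gauss--Bonnet).

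For the simplicial volume, I would invoke Bucher--Karlsson's sharp product formula for closed hyperbolic manifolds, $\|M_1\ti M_2\|_\De=\binom{n_1+n_2}{n_1}\|M_1\|_\De\|M_2\|_\De$, which iterated gives $\|M\|_\De=\frac{(2k)!}{2^k}\prod\|S_{g_i}\|_\De=(2k)!\,|\chi(M)|$ using the standard computation $\|S_g\|_\De=2|\chi(S_g)|$. For the Euler cocycle norm, the naive Sullivan--Smillie estimate $\|\eu_b\|_\infty\le 2^{-2k}$ for flat $\gl_{2k}^+(\re)$-bundles is too weak: combined with the above it yields only $|\eu(E)|\le\frac{(2k)!}{2^{2k}}|\chi(M)|$, which fails to match the claimed bound for $k\ge 2$. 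Sharpening this estimate using the product structure is the heart of the argument.

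I would sharpen the cocycle bound by \emph{straightening}. The fundamental cycle of $M$ is represented in $\ell^1$-homology by geodesic simplices in $(\hy^2)^k$, and on such a product simplex the Euler cocycle of a flat $\gl_{2k}^+(\re)$-bundle can be estimated by applying Milnor's antipodal-symmetrization trick factorwise --- once in each $\hy^2$-factor, contributing one factor of $1/2$ per factor rather than only once per $2k$-simplex. This produces a representative of $\eu_b(E)$ of $\ell^\infty$-norm $\le 1/2^k$ on straight product cycles. Combined with the fact that Bucher--Karlsson's straight representation of $[M]$ pairs optimally with cocycles of this form, one obtains $|\eu(E)|\le|\chi(M)|/2^k$.

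The hardest step is the factorwise decomposition of the Euler cocycle: the holonomy of $E$ is a general representation of $\pi_1(M)=\prod\pi_1(S_{g_i})$ into $\gl_{2k}^+(\re)$ and need not respect the product structure of the base. One must show that when the Euler cocycle is evaluated on a straight simplex in $(\hy^2)^k$, the product structure of the base alone suffices to iterate Milnor's half-sign argument in each factor, even when the bundle itself does not split as a product. Making this factorwise $1/2$-estimate rigorous is the essential new ingredient beyond the classical Milnor--Wood inequality and is where the constant $2^k$ (rather than $2^{2k}$) ultimately comes from.
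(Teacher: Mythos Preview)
The paper does not prove Theorem~\ref{thm:bg}; it quotes the result from Bucher--Gelander \cite{bg} and uses it as a black box in the proof of Theorem~\ref{thm:surfprod}. So there is no ``paper's own proof'' to compare against.

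That said, a few comments on your sketch. Your general framework---bounded cohomology, a bounded refinement of the Euler class, and pairing with the simplicial volume---is indeed the setting in which Bucher--Gelander work. But two points deserve attention. First, the theorem is stated for an arbitrary compact quotient of $(\hy^2)^k$, which includes quotients by \emph{irreducible} lattices in $\Sl_2(\re)^k$; in that case $M$ is not a product of surfaces and $\pi_1(M)$ is not of the form $\prod_i\pi_1(S_{g_i})$. Your invocation of a product formula for simplicial volume and your description of $\pi_1(M)$ both tacitly assume the reducible case, so the argument as written does not cover the full statement (and the irreducible case is exactly the one needed for Theorem~\ref{thm:surfprod} when $\Ga$ is irreducible). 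Second, you correctly identify that the naive Sullivan--Smillie bound on $\|\eu_b\|_\infty$ is too weak, and you isolate the real difficulty: obtaining a factor of $1/2$ per $\hy^2$-factor even though the holonomy $\rho:\Ga\ra\gl_{2k}^+(\re)$ has no reason to respect any product structure. Your proposal names this step but does not carry it out; that step is the content of the Bucher--Gelander paper, and it is not a routine extension of Milnor's argument. So what you have is an accurate outline of where the difficulty lies, together with an incorrect structural assumption on $M$, rather than a proof.
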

The case $k=1$ is due to Milnor. 

\begin{proof}[Proof of Theorem \ref{thm:surfprod}]
By Theorem \ref{thm:bg}, the unit tangent bundle of $M$ has no flat $\gl_{2k}^+(\re)$ connection. Furthermore, since fiberwise bordant bundles have the same Euler class (by Lemma \ref{lem:sameeuler}), Theorem \ref{thm:bg} implies that the unit tangent bundle cannot be fiberwise bordant to a bundle with a flat $\gl_{2k}^+(\re)$ connection. However, if Push is realized, then by the construction of Section \ref{sec:genproc}, there exists a fiberwise bordism between the unit tangent bundle $T^1M$ and a bundle with flat $\gl_{2k}^+(\re)$ connection. This contradiction shows that Push is not realized by diffeomorphisms.
\end{proof}

The argument above is essentially the argument given in \cite{bcs}; that paper focuses on the case $M$ is a surface, so $k=1$. For $k=1$, Theorem \ref{thm:bg} is known as Milnor's inequality (see \cite{bg}). 

One might wonder if the proof of Theorem \ref{thm:surfprod} extends to other locally symmetric manifolds. Unfortunately, with the current known results on bounded cohomology, the answer is no. For example, the known bounds on the Euler number of a flat $\gl_{2k}(\re)$-bundle over a hyperbolic $2k$-manifold are due to Smillie (see the comment on Page 3 in \cite{bg}): If $M^{2k}$ is hyperbolic and if $E\ra M$ is a flat $\gl_{2k}^+(\re)$-bundle, then 
\[|\eu(E)|\le \fr{\pi^k}{2^{k}\cdot(2k-1)!!\cdot v_{2k}}|\eu(TM)|.\]
Here $(2k-1)!!=\prod_{i=1}^k(2i-1)$ and $v_{2k}$ is the volume of a regular ideal $(2k)$-simplex in $\hy^{2k}$. Unfortunately, for $k\ge2$ the fraction on the right-hand-side is greater than 1, and so does \emph{not} provide an obstruction to $TM$ being bordant to a bundle with a flat $\gl_{2k}^+(\re)$ connection. In the next section we exhibit the Pontryagin classes as obstructions to point-pushing on nonpositively curved manifolds. 

\subsection{Proof of Theorem \ref{thm:pont}}\label{sec:pont} 

\begin{proof}[Proof of Theorem \ref{thm:pont}]
Suppose for a contradiction that Push is realized. In Section \ref{sec:genproc}, we showed that this produces Diagram (\ref{diag:maincoho}) that commutes on the Pontryagin classes and Euler class in $H^*\big(\homeo(S^{n-1})\big)$. Let $q\in H^*\big(B\homeo(S^{n-1})\big)$ be the total Pontryagin class defined in Section \ref{sec:flat}. Using the notation of Diagram \ref{diag:maincoho}, the pullback $b^*j^*d^*(q)$ is trivial since 
\[d^*(q)=1+p_1+\cd+p_{[n/2]}\] is the total Pontryagin class in $H^*(B\gl_n(\re))\simeq H^*(BO_n)$, and 
\[j^* (1+p_1+\cd+p_{[n/2]})=1\] by Corollary \ref{cor:flatccs}. On the other hand, $a^*i^*c^*(q)$ is $p(TM)$ by Lemma \ref{lem:tang}, and this is nontrivial by assumption. Since Diagram (\ref{diag:maincoho}) commutes, this is a contradiction. Then Push is not realized by diffeomorphisms.
\end{proof}

\begin{rmk} We give two remarks in passing. 
\begin{enumerate}
\item[(i)] In Theorem \ref{thm:pont} we actually need to assume $M$ has nontrivial Pontryagin classes, rather than just nontrivial Pontryagin numbers. For example, if $M$ is complex hyperbolic, then the dual symmetric manifold is $\co P^n$, which has zero Pontryagin numbers for $n$ odd (see for example \cite{ms}). 
\item[(ii)] There are non-locally symmetric manifolds to which Theorem \ref{thm:pont} applies. Ontaneda's Riemannian hyperbolization (building on work of Charney-Davis and Davis-Januszkiewicz) gives examples of many negatively curved Riemannian manifolds that are not locally symmetric manifolds (see \cite{on}). Specifically, for any closed manifold $N^n$, one can construct a negatively curved manifold $M^n$ together with a map $M\ra N$ so that the Pontryagin classes of $N$ pullback to the Pontryagin classes of $M$. Hence if $N$ is any manifold with nonzero Pontryagin classes, and $M$ is a hyperbolization, then $\text{Push}:\pi_1(M,*)\ra \pi_0\diff(M,*)$ is not realized by diffeomorphisms.
\end{enumerate}\end{rmk}

\subsection{Proof of Theorem \ref{thm:main}}\label{sec:higherrank} 
In this section $G$ will denote a semisimple Lie group with no compact factors and real rank at least $2$. Fix $\Ga\le G$ a lattice. Set $M=\Ga\bs G/K$ and $n=\dim M$. So far we have seen that when $M$ has nonpositive curvature, the Euler or Pontryagin classes are obstructions to realizing Push. Unfortunately, locally symmetric manifolds of Type (2) have trivial total Pontryagin class $p(TM)$, so the approaches of Sections \ref{sec:ppsurf} and \ref{sec:pont} will not work for these examples. 

To overcome this problem, recall from Section \ref{sec:genproc} that if Push is realized, we obtain a representation $\rho_0:\Ga\ra\gl_n(\re)$. If $M$ is Type (2a), then $\rho_0$ is not pre-compact, and $\rho_0$ extends to $G\ra\gl_n(\re)$ by Margulis superrigidity. We leverage this fact to reduce the realization problem to the representation theory of $G$. 

The proof of Theorem \ref{thm:main} will proceed by the steps outlined in the introduction. We complete Step (ii) in Sections \ref{sec:kact} and  \ref{sec:ccreps}. The proof of Step (iii) will be carried out in Section \ref{sec:isotropyextend}. Finally, we prove Theorem \ref{thm:main} in Section \ref{sec:prove2a}.

\subsubsection{{\bf The action of $K$ on $\pa(G/K)$}}\label{sec:kact}Here we give an algebraic description of the action of $K$ on the visual boundary $\pa(G/K)$. 

\begin{lem}
The action of $K\sbs G$ on $\pa G/K$ is induced by a linear representation $\iota: K\ra\emph{\gl}_n(\re)$. 
\end{lem}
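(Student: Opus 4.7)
The plan is to identify the visual boundary $\pa(G/K)$ with the unit sphere in the tangent space $T_{eK}(G/K)$ and observe that under this identification, the $K$-action on $\pa(G/K)$ is the restriction of a linear action on this tangent space, namely the isotropy representation.

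First I would recall the Cartan decomposition $\mf g=\mf k\op\mf p$, under which $T_{eK}(G/K)$ is canonically identified with $\mf p\simeq\re^n$. The group $K$ acts on $G/K$ by left translation and fixes the basepoint $eK$, so the derivative at $eK$ yields a linear representation
\[\iota:K\ra\aut(\mf p)\simeq\gl_n(\re),\]
which is the isotropy representation (equivalently, $\iota(k)=\Ad(k)|_{\mf p}$).

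Next I would use that $G/K$ is a complete, simply connected manifold of nonpositive sectional curvature. The exponential map $\exp_{eK}:T_{eK}(G/K)\ra G/K$ is a diffeomorphism, and by the standard description of the visual boundary of a Hadamard manifold, the assignment $v\mapsto [\ga_v]$, where $\ga_v(t)=\exp_{eK}(tv)$, gives a homeomorphism between the unit sphere $S(T_{eK}G/K)\simeq S^{n-1}$ and $\pa(G/K)$. Since elements of $K$ act on $G/K$ by isometries fixing $eK$, they permute unit-speed geodesic rays emanating from $eK$; more precisely, $k\cdot\ga_v=\ga_{d_{eK}(k)v}$. Hence under the above identification, the $K$-action on $\pa(G/K)$ corresponds to the action of $K$ on $S(T_{eK}G/K)$ induced by the derivatives $d_{eK}(k)=\iota(k)$.

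Thus the $K$-action on $\pa(G/K)\simeq S^{n-1}$ is the restriction of the linear representation $\iota:K\ra\gl_n(\re)$ to the unit sphere, as required. The main (and only mildly subtle) point is the identification of the visual boundary with unit tangent directions at $eK$, which is just the Cartan--Hadamard theorem applied to the symmetric space $G/K$; everything else is a direct unwinding of definitions.
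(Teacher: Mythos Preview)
Your proposal is correct and follows essentially the same approach as the paper: identify $\pa(G/K)$ with the unit sphere in $T_{eK}(G/K)\simeq\mf p$ via the exponential map, observe that this identification is $K$-equivariant because $K$ acts by isometries fixing $eK$, and note that the induced $K$-action on $\mf p$ is the isotropy representation $\iota(k)=\Ad(k)\rest{}{\mf p}$. The only difference is emphasis: the paper spells out why the derivative of the left $K$-action agrees with $\Ad\rest{}{\mf p}$ (both the conjugation and left actions of $K$ on $G$ descend to the same action on $G/K$), whereas you take this as known.
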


\begin{proof} Since $G$ has noncompact type, the symmetric manifold $G/K$ has nonpositive curvature, and the visual boundary $\pa(G/K)$ can be defined as equivalence classes of geodesic rays \cite{bgs}. The exponential map defines a homeomorphism 
\begin{equation}\label{eqn:adjointboundary}s: T_{eK}^1(G/K)\ra \pa(G/K)\end{equation}
and the action of $K$ on $G/K$ induces $K$-actions on $T_{eK}^1(G/K)$ and on $\pa(G/K)$. It is easy to see that $s$ is equivariant with respect to these actions. 

The action of $K$ on $T^1_{eK}(G/K)$ can be described as follows. The adjoint action of $K$ on $\mf g=T_e(G)$ decomposes into invariant subspaces $\mf k\op\mf p$, where $\mf k=\text{Lie}(K)$ and $\mf p\simeq T_{eK}(G/K)$. Since the conjugation action and the left action of $K$ on $G$ descend to the same action on $G/K$, the action of $K$ on $T_{eK}(G/K)\simeq\mf p$ is isomorphic to the adjoint action of  $K$ on $\mf p\sbs\mf k\op\mf p=\mf g$. Thus the action of $K$ on $\pa(G/K)$ is isomorphic to the action induced by $\iota: K\ra\aut(\mf p)$.\end{proof}

\noindent We will refer to the representation $\iota:K\ra\aut(\mf p)$ as the \emph{isotropy representation}. 

\subsubsection{{\bf Characteristic classes of representations}}\label{sec:ccreps} In this section we show that the isomorphism class of a representation can be detected by the characteristic classes of that representation. Let $K$ be a compact group with maximal torus $S$, and let $\al:K\ra\gl_n(\co)$ be a continuous representation. Up to conjugation, the restriction of $\al$ to $S$ is diagonal, and there are continuous homomorphisms $\la_i:S\ra\co^\ti$, so that for every $s\in S$
\[\al(s)=\left(\begin{array}{ccccccc}
\la_1(s)&&\\
&\ddots&\\
&&\la_n(s)
\end{array}\right)\]
The $\la_i$ are called the \emph{weights} of the representation, and they uniquely determine the representation (see \cite{fh} Page 375). The space of weights is $\Hom(S,\co^\ti)\simeq H^1(S;\z)$.

The representation $\al:K\ra\gl_N(\co)$ induces a map on classifying spaces 
\[\al^*:H^*(B\gl_n(\co))\ra H^*(BK),\]
and the images $\al^*(c_i)$ of the Chern classes are invariants of the representation. In other words, conjugate representations have the same Chern classes. In fact, the converse is also true. 

\begin{prop}\label{prop:conjreps}
Let $\al,\be:K\ra\emph{\gl}_N(\co)$ be two representations. If the induced maps $\al^*,\be^*:H^*(B\emph{\gl}_n(\co))\ra H^*(BK)$ are equal, then $\al$ and $\be$ are isomorphic representations. 
\end{prop}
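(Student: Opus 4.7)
The strategy is to restrict both representations to a maximal torus $S\sbs K$, where representations are determined by their weight multisets, and then show that the Chern-class hypothesis recovers those weights. I will assume $K$ is connected, which is the case of interest (for $K$ a maximal compact subgroup of a connected semisimple Lie group).

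First, I would pass to a maximal torus $S\sbs K$. By Borel's theorem, for a compact connected Lie group $K$ with Weyl group $W$, the restriction map $H^*(BK;\q)\hra H^*(BS;\q)$ is injective (with image $H^*(BS;\q)^W$). The hypothesis $\al^*=\be^*$ therefore gives $(\al\rest{}{S})^*(c_i)=(\be\rest{}{S})^*(c_i)$ in $H^*(BS;\q)$ for every Chern class $c_i\in H^*(B\gl_n\co;\q)$.

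Next, recall that for a torus representation $\rho:S\ra\gl_n(\co)$ with weights $\la_1,\ld,\la_n\in\Hom(S,\co^\ti)\simeq H^2(BS;\z)$, the total Chern class is $\rho^*(c)=\prod_{i=1}^n(1+\la_i)$ in $H^*(BS)$; that is, $\rho^*(c_k)$ is the $k$-th elementary symmetric polynomial in the $\la_i$. Since $H^*(BS;\q)$ is a polynomial ring (hence an integral domain), the elementary symmetric polynomials determine an unordered multiset of elements. Applying this to $\al\rest{}{S}$ and $\be\rest{}{S}$, the agreement of all Chern classes forces the weight multisets to coincide. Because a representation of a torus is determined up to isomorphism by its multiset of weights, $\al\rest{}{S}\simeq\be\rest{}{S}$.

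Finally, I would upgrade this to an isomorphism over $K$ via character theory: since $K$ is compact and connected, every conjugacy class meets $S$, so the characters $\chi_\al$ and $\chi_\be$ (which are class functions) are determined by their restrictions to $S$. The isomorphism $\al\rest{}{S}\simeq\be\rest{}{S}$ gives $\chi_\al\rest{}{S}=\chi_\be\rest{}{S}$, hence $\chi_\al=\chi_\be$ on $K$, and then $\al\simeq\be$ by Peter--Weyl. The one point that deserves care is the passage from integrally-defined weights to the rational Chern-class identity; this is harmless because $H^*(BS;\z)$ is torsion-free and embeds in $H^*(BS;\q)$, so distinct weight multisets produce distinct rational Chern polynomials. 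I expect no serious obstacle here—the real content is simply the combination of Borel's restriction theorem with the splitting principle interpretation of Chern classes.
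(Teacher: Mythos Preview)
Your proposal is correct and follows essentially the same route as the paper: restrict to a maximal torus, identify the total Chern class with $\prod(1+\lambda_i)$, use that $H^*(BS)$ is a polynomial algebra to recover the weight multiset, and conclude. The paper cites Fulton--Harris for the final step ``weights determine the representation,'' whereas you unpack it via characters and Peter--Weyl; and the paper invokes unique factorization explicitly where you write ``integral domain'' (the UFD property is what is actually needed to recover the linear factors, though of course a polynomial ring over $\q$ has it). These are cosmetic differences only.
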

In short, the proof is as follows. The Chern classes of a representation can be computed by the weights of the representation, and if $\al$ and $\be$ have the same Chern classes, then they must also have the same weights. A representation is determined up to conjugacy by its weights, so representations with the same Chern classes must be isomorphic. 
\begin{proof}[Proof of Proposition \ref{prop:conjreps}]
Borel-Hirzebruch \cite{bh} give an algorithm for computing $\al^*$. Choose a maximal torus $S\sbs K$ on which $\al$ is diagonal, and let $\la_i\in  H^1(S;\z)$ be the weights as above. The transgression for the fiber sequence $S\ra ES\ra BS$ defines an isomorphism $\tau:H^1(S;\z)\ra H^2(BS;\z)$, and we set $\om_i=\tau(\la_i)$. The polynomial 
\[c(\al):=\prod_{i=1}^N\big(1+\om_i)\in H^*(BS)\]
is invariant under the action of the Weyl group of $S\sbs K$, and hence is in the image of $H^*(BK)\ra H^*(BS)$. Then according to Borel-Hirzebruch, $\al^*(c_i)$ is equal to the degree-$i$ term of $c(\al)$. 

Now if $\al^*=\be^*$, then $c(\al)=c(\be)$. Since $H^*(BS)$ is a polynomial algebra, it is a unique factorization domain, and hence the set of $\om_i$'s for $\al$ coincide with the set of $\om_i$'s for $\be$. Since the transgression $\tau$ is an isomorphism, this means that the $\la_i$'s for $\al$ coincide with the $\la_i$'s for $\be$. In other words, $\al$ and $\be$ have the same weights. Since a representation is uniquely determined by its weights (\cite{fh} Page 375), $\al$ and $\be$ are isomorphic representations. 
\end{proof}

\subsubsection{{\bf Extending the isotropy representation}} As discussed above, the proof of Theorem \ref{thm:main} we will reduce to showing the following representation theory fact. 

\begin{thm}\label{thm:noextend}
Let $M=\Ga\bs G/K$ be a Type $(2a)$ locally symmetric manifold. Then the isotropy representation $\iota: K\ra\aut(\mf p)$ does not extend to a representation of $G$.
\end{thm}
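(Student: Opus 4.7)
The plan is to reduce to the case where $G$ is simple and then to argue case by case through the Type $(2\mathrm{a})$ classification. Since $\iota$ is irreducible whenever $G/K$ is an irreducible symmetric space, any putative extension must be an irreducible real representation of $G$, which is a strong restriction.

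First I would reduce to $G$ simple. Write $G=G_1\times\cdots\times G_r$ as a product of noncompact simple factors, so $K=\prod K_i$, $\mathfrak p=\bigoplus\mathfrak p_i$, and $\iota=\bigoplus\iota_i$ with each $\iota_i$ irreducible as a $K_i$-representation and $K_j$ acting trivially on $\mathfrak p_i$ for $j\neq i$. Suppose $\hat\iota:G\to\aut(\mathfrak p)$ extends $\iota$. For $j\neq i$, the factor $G_j$ commutes with $K_i$ in $G$, so its action preserves the $K_i$-isotypic decomposition of $\mathfrak p$; since $G_i$ normalizes $K_i$ and inner automorphisms of a compact connected group act trivially on isomorphism classes of irreducibles, $G_i$ does likewise. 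Hence $\mathfrak p=\bigoplus\mathfrak p_i$ is $G$-invariant. Moreover, for $j\neq i$ the action of $G_j$ on the irreducible $K_i$-module $\mathfrak p_i$ commutes with $K_i$ and so is scalar by Schur; as $G_j$ is semisimple and has no nontrivial characters, it must act trivially. Thus each $\iota_i$ extends to $G_i$, and the problem reduces to the simple case.

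With $G$ simple the Type $(2\mathrm a)$ possibilities are: (i) complex simple Lie groups viewed as real Lie groups; (ii) $\Sl(n,\re)$ for $n\ge 3$; (iii) $\SU^*(2n)$ for $n\ge 2$; and (iv) $E_{6(-26)}$. All share the rank inequality $\rk(\mathfrak g_\co)>\rk(\mathfrak k_\co)$. In case (i), $\iota\cong\ad_K$ on $\mathfrak k$ and the natural real representation $\Ad_G:G\to\aut(\mathfrak g)$ restricts to $K$ as $\ad_K\oplus\ad_K$ on $\mathfrak k\oplus i\mathfrak k$; however, no $K$-invariant copy of $\ad_K$ inside $\mathfrak g$ is $G$-invariant, since for $Y\in\mathfrak k$ the automorphism $\exp(\ad iY)$ does not preserve $\mathfrak k\subset\mathfrak g$. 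More uniformly, any real irreducible representation of the complex Lie group $G$ complexifies either to a self-conjugate irrep $V\boxtimes\bar V$ (whose canonical real form is ``Hermitian operators on $V$'' and whose $K$-restriction $V\otimes V^*$ always contains the trivial summand), or to a conjugate pair $V\boxtimes\bar W\oplus\bar V\boxtimes W$ (whose $K$-restriction splits into at least two pieces); neither can equal the irreducible $\iota$. For cases (ii)--(iv), one enumerates low-dimensional irreducible $\mathfrak g_\co$-representations via Weyl's dimension formula and checks by branching to $\mathfrak k_\co$ that none restricts to $\iota_\co$. For case (ii), with $\iota\cong\sym^2_0(\re^n)$ of dimension $\tfrac{(n-1)(n+2)}{2}$, direct dimension and weight checks rule out any $\mathfrak{sl}(n,\co)$-irrep that restricts to $\sym^2_0(\co^n)$ on $\mathfrak{so}(n,\co)$---e.g., for $n=3$ one would need a $5$-dimensional $\mathfrak{sl}(3,\co)$-irrep, which does not exist; and when the dimension accidentally matches (such as $\Lambda^3\co^6$ for $n=6$), the $\mathfrak{so}(n,\co)$-branching is reducible and differs from $\sym^2_0\co^n$. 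Cases (iii) and (iv) proceed analogously for the pairs $(\mathfrak{sl}(2n,\co),\mathfrak{sp}(n,\co))$ and $(E_6^\co,F_4^\co)$.

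The main obstacle will be producing a clean, ideally uniform, argument for the real cases (ii)--(iv), as the brute-force enumeration via Weyl's dimension formula is tedious. A more conceptual strategy would exploit the common rank inequality $\rk(\mathfrak g_\co)>\rk(\mathfrak k_\co)$ to show that the $\mathfrak k_\co$-weights of any $\mathfrak g_\co$-irrep---arising by restriction from the larger Cartan $\mathfrak h_\co\supsetneq\mathfrak t_\co$---acquire extra multiplicities and therefore cannot match the weights of the irreducible $\iota_\co$. Formulating this obstruction case-free is the technical heart of the argument.
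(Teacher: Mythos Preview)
Your strategy---reduce to $G$ simple and then argue case by case---matches the paper's. One concrete error: in your reduction, the claim ``$G_i$ normalizes $K_i$'' is false (e.g.\ $\SO_n$ is not normal in $\Sl_n(\re)$), so your argument that $\hat\iota(G_i)$ preserves $\mathfrak p_i$ breaks down. The fix is easy: $\mathfrak p_i$ is precisely the subspace of $\mathfrak p$ fixed by $\prod_{j\neq i}K_j$ (since each $\iota_j$ is irreducible and nontrivial), and $G_i$ commutes with that group. The paper simply asserts the reduction without proof.

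For the complex case your argument differs from the paper's. You observe that the restriction of $V\boxtimes\bar V$ to the diagonal $\mathfrak g\cong\mathfrak k_\co$ is $V\otimes V^*$, which contains a trivial summand and hence cannot equal the irreducible $\ad$, while the conjugate-pair case is visibly reducible on restriction. The paper instead uses a real-structure criterion (Proposition~\ref{prop:selfconj}) to force both tensor factors nontrivial and then applies the crude bound $\dim V_1\cdot\dim V_2\ge d^2>\dim\mathfrak g$, checked against a table. Both arguments work; yours is arguably cleaner.

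For $\Sl_n(\re)$ and $\SU^*(2n)$ the paper does \emph{not} enumerate low-dimensional irreducibles and check branching as you propose. Instead it parametrizes all dominant $\lambda\in\mathfrak h^*$ whose restriction to the Cartan $\mathfrak h_1\subset\mathfrak k$ equals the highest weight of $\iota_\co$; the dominance inequalities collapse this affine set to the single weight $2L_1$ (resp.\ $L_1+L_2$), or for odd $n$ in the $\Sl_n$ case to a one-parameter ray, and a direct dimension or Weyl-orbit count then rules these out uniformly in $n$. This replaces the tedious enumeration you anticipate with a short linear-algebra computation. For $E_{6(-26)}$ the paper simply notes $\dim\mathfrak p=26<27$, the minimal nontrivial $\mathfrak e_6(\co)$-dimension.
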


\noindent The proof is a somewhat lengthy detour and will be performed in Section \ref{sec:isotropyextend}.

\subsubsection{{\bf Conclusion}}\label{sec:prove2a}

\begin{proof}[Proof of Theorem \ref{thm:main}]
As in the proofs of Theorems \ref{thm:pont} and \ref{thm:surfprod}, we proceed by contradiction. Let $n=\dim(M)$. Let $\rho_0,\rho_1:\Ga\ra\homeo(S^{n-1})$ be the representations constructed in Section \ref{sec:genproc}, where $\rho_1$ factors through the action of $G$ on the boundary of $G/K$ and $\rho_0$ factors through some linear action $\gl_n\re\car T^1_{\wtil *}(G/K)$. This is express by the following (not-necessarily commutative) diagram
\[\begin{xy}
(40,-10)*+{\homeo(S^{n-1})}="E";
(0,-10)*+{\Ga}="B";
(20,-20)*+{\gl_n\re}="C";
(20,0)*+{G}="D";
(13,-13)*+{}="Z";
{\ar"C";"E"}?*!/^3mm/{};
{\ar "B";"C"}?*!/^3mm/{h};
{\ar "B";"D"}?*!/_3mm/{};
{\ar "D";"E"}?*!/_3mm/{};
{\ar@{-->} "D";"C"}?*!/^3mm/{};
{\ar@(ur,ul) "Z";"Z"}?*!/^3mm/{};
\end{xy}\] 
Since $M$ has Type (2a), $h$ either has finite image or its Zariski closure has no compact factors. After passing to a finite index subgroup of $\Ga$ and possibly replacing $G$ by a finite cover (see Section \ref{sec:res}), $h$ extends to $G$ by Theorem \ref{thm:margulis}. This is expressed by the dashed arrow in the diagram above and the fact that the left triangle commutes. 

In addition, this diagram commutes on the level of $H^*(B(-))$ by Lemma \ref{lem:sameeuler} and Proposition \ref{cor:samepont}. Switching focus from $\Ga$ to $K$, there is a diagram
\[\begin{xy}
(-30,0)*+{K}="A";
(-10,0)*+{G}="B";
(20,-6)*+{\gl_n\re}="C";
(20,6)*+{\homeo(S^{n-1})}="D";
{\ar@{^{(}->}"A";"B"}?*!/^3mm/{};
{\ar "B";"C"}?*!/^3mm/{\psi};
{\ar "B";"D"}?*!/_3mm/{f};
{\ar "C";"D"}?*!/^3mm/{g};
\end{xy}\]
This diagram commutes on the level of $H^*(B(-))$ because the preceding one does. We claim that this implies that $f$ and $g\circ\psi$ are isomorphic when restricted to $K$. First note that $g\circ\psi\rest{}{K}$ and $f\rest{}{K}$ are induced by \emph{linear} representations of $K$: Tautologically, $g\circ\psi$ is induced by the linear representation $\psi\rest{}{K}:K\ra\gl_n\re$. Also $f\rest{}{K}$ is induced by the isotropy representation 
\[\iota: K\ra\gl(\mf p)\simeq\gl_n\re,\] as explained in Section \ref{sec:kact}. Since $g\circ\psi$ and $f$ induce the same map on cohomology of classifying spaces, this means $\psi^*$ and $\iota^*:H^*(B\gl_n\re)\ra H^*(BK)$ are equal. By Proposition \ref{prop:conjreps}, $\psi$ and $\iota$ are isomorphic representations. 

Since $\psi$ is the restriction of a representation of $G$, the same must be true of $\iota$. However,  $\iota$ is \emph{not} the restriction of any $G$-representation by Theorem \ref{thm:noextend}. This contradiction shows that Push is not realized by diffeomorphisms.
\end{proof}

\section{Extending the isotropy representation}\label{sec:isotropyextend}

The goal of this section is to prove Theorem \ref{thm:noextend}. It is enough to prove Theorem \ref{thm:noextend} for $G$ simple. Let $G_0$ be a real simple Lie group with maximal compact subgroup $K_0\sbs G_0$. Let $\mf k_0\sbs\mf g_0$ denote the corresponding Lie algebras. The adjoint action of $K_0$ on $\mf g_0$ decomposes into invariant subspaces $\mf g_0=\mf k_0\op\mf p_0$. We want to show that the isotropy representation $\iota:K_0\ra\aut(\mf p_0)$ does not extend to a representation of $G_0$. 

To solve this problem, we convert to the Lie algebra and complexify $\mf k=\mf k_0\ot\co$, $\mf g=\mf g_0\ot\co$, and $\mf p=\mf p_0\ot\co$. We face  the following problem.

\begin{prob}\label{q:extend}
Show there is no representation of $\rho:\mf g\ra\End(V)$ whose restriction to $\mf k\sbs\mf g$ is isomorphic to the isotropy representation $\iota$.
\end{prob}

We will employ two arguments to solve this problem. One argument will apply to complex $G$ (see Section \ref{sec:extendcplx}). The other argument will apply to $G=\Sl_n(\re)$, $\SU^*_{2n}$, $\SO_{n,1}$, and $E_{6(-26)}$. 

In the latter case, we proceed by contradiction. Suppose $\rho:\mf g\ra\End(V)$ exists. Then $\rho$ must be irreducible because the isotropy representation is irreducible. Here we use that $\mf g$ is simple (see \cite{helgason2} Ch.\ VIII, Sec.\ 5). Let $\mf h_1\sbs\mf k$ and $\mf h\sbs\mf g$ be Cartan subalgebras with $\mf h_1\sbs\mf h$. Let $r:\mf h^*\ra\mf h_1^*$ be the map the restricts a weight of $\mf h$ to a weight of $\mf h_1$. Let $\la$ denote the highest weight of $V$ and let $\la_1$ denote the highest weight of $\mf p$. Since $V$ solves the extension problem, $r(\la)=\la_1$. This restricts the possible $\la$ to an affine subspace $A$ of $\mf h^*$. We further reduce the set of possible $\la$ by intersecting $A$ with the cone of dominant weights. For the remaining $\la$ we will see that $V_\la$---the irreducible representation with highest weight $\la$---does not have the right dimension (we must have $\dim V=\dim\mf p$). From this we conclude that the representation $V$ does not exist. 

\subsection{Extending the isotropy representation for $G=\Sl_n(\re)$} Here $K=\SO_n$. After complexifying we reduce to proving the following lemma. 

\begin{lem}
\label{lem:noextendSO}
Fix $n\ge2$. Let $\mf g=\mf{sl}_n(\co)$ and let $\mf k=\mf{so}_n(\co)$. Let $\mf g=\mf k\op\mf p$ be the decomposition of the adjoint representation of $\mf k$ on $\mf g$. Then $\iota:\mf k\ra\emph{\End}(\mf p)$ does not extend to a representation $\mf g\ra\emph{\End}(\mf p)$. 
\end{lem}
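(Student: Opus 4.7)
The plan is to assume $\iota$ extends to a representation $\rho:\mf g\ra\End(V)$ with $V|_{\mf k}\simeq\mf p$ and derive a dimensional contradiction via the highest weight of $V$. For $n\ge 3$, the isotropy representation $\mf p\simeq\sym^2_0\co^n$ is $\mf{so}_n(\co)$-irreducible, so any $\mf g$-decomposition of $V$ would induce a proper $\mf k$-decomposition; hence $V=V_\la$ is an irreducible highest-weight module. The case $n=2$, where $\mf p$ is $\mf{so}_2$-reducible, will be handled separately at the end.

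Next I would set up the restriction map on weights. Fix a Cartan $\mf h_1\sbs\mf{so}_n(\co)$ and conjugate it into the standard diagonal Cartan $\mf h\sbs\mf{sl}_n(\co)$ as the subspace of points $(b_1,-b_1,\ld,b_m,-b_m,0,\ld)\sbs\co^n$. Writing $l_j\in\mf h_1^*$ for the dual basis, the restriction $r:\mf h^*\ra\mf h_1^*$ satisfies $r(L_{2j-1})=l_j$ and $r(L_{2j})=-l_j$. Choosing a regular direction $X\in\mf h_1$ for $\mf{so}_n$-roots and extending to a small generic perturbation $Y=X+Y'\in\mf h$ regular for $\mf{sl}_n$-roots produces compatible positive systems; every positive $\mf k$-root then lifts to positive $\mf g$-roots, so the $\mf{sl}_n$-highest weight vector $v_\la$ is simultaneously an $\mf{so}_n$-highest weight vector. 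Since $V_\la|_{\mf k}\simeq\mf p$ is $\mf{so}_n$-irreducible with highest weight $2\omega_1^{\mf{so}_n}=2l_1$, one concludes $r(\la)=2l_1$.

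Writing $\la=\sum_{i=1}^{n-1}c_i\omega_i^{\mf{sl}_n}$ with $c_i\in\z_{\ge 0}$, the pairwise-cancellation formulas $r(\omega_{2j-1})=l_j$ and $r(\omega_{2j})=0$ force $c_1=2$ and $c_{2j-1}=0$ for $j\ge 2$, leaving the even-indexed $c_{2j}$ unconstrained. The minimal admissible highest weight is $\la=2\omega_1$, corresponding to $V_{2\omega_1}=\sym^2\co^n$ of dimension $\binom{n+1}{2}$. By monotonicity of the Weyl dimension formula in the dominant coefficients, every admissible $\la$ satisfies $\dim V_\la\ge\binom{n+1}{2}=\dim\mf p+1$, which contradicts $\dim V=\dim\mf p$.

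For $n=2$: by direct computation, the generator $H$ of $\mf{so}_2(\co)$ acts on $\mf p$ with eigenvalues $\pm 2i$, whereas on any $2$-dimensional $\mf{sl}_2(\co)$-representation (the standard $V_1$ or the trivial sum $V_0\op V_0$) $H$ has eigenvalues in $\{\pm i\}$ or $\{0\}$ respectively; thus no extension can exist. The main delicate point I expect is the compatibility of positive systems in the setup above: verifying that the top $\mf{so}_n$-weight of $V_\la|_{\mf k}$ really equals $r(\la)$ requires first conjugating $\mf{so}_n$ so that its Cartan lies inside $\mf h$, and then choosing regular directions consistently for both root systems.
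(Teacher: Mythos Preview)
Your strategy is the same as the paper's: reduce to an irreducible $\mf{sl}_n$-module $V_\la$ with $V_\la|_{\mf k}\simeq\mf p$, pin down $\la$ via the restriction $r:\mf h^*\to\mf h_1^*$, and rule out the candidates by dimension. The paper uses the split form $B_k$ (so $\mf h_1=\{(b_1,\ldots,b_k,-b_1,\ldots,-b_k)\}$ rather than your $(b_1,-b_1,\ldots)$) and splits into $n$ even/odd, but the architecture is identical, and your $n=2$ argument is fine.

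There is, however, a genuine gap at precisely the point you flag as delicate. Your compatibility argument is correct as far as it goes: for $X\in\mf h_1$ regular and $Y\approx X$, the $Y$-highest weight $\la^Y$ satisfies $r(\la^Y)=2l_1$. But in your embedding no element of $\mf h_1$ lies in the \emph{standard} $\mf{sl}_n$-chamber (already $x_1>x_2>x_3>x_4$ forces $b_1>0$, $b_2<-b_1<0$, and $b_2>0$ simultaneously), so the $Y$-system is never the standard one. Your formulae $r(\omega_{2j-1})=l_j$, $r(\omega_{2j})=0$ use the \emph{standard} $\omega_i=L_1+\cdots+L_i$, and your dominance $c_i\ge0$ is standard dominance; these are not the coordinates in which $\la^Y$ is dominant. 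If one redoes the computation with the $Y$-fundamental weights, the constraint $r(\la^Y)=2l_1$ with $Y$-dominance gives exactly three candidates (in standard-dominant form): $2\omega_1$, $\omega_1+\omega_{n-1}$, $2\omega_{n-1}$, i.e.\ $\sym^2\co^n$, the adjoint, and $\sym^2(\co^n)^*$. Only the first satisfies $r(\la)=2l_1$ in your coordinates, so your enumeration ``$c_1=2$, odd $c_{2j-1}=0$, even $c_{2j}$ free'' misses the other two. All three are dispatched by dimension ($\binom{n+1}{2}$ and $n^2-1$ both exceed $\tfrac{(n-1)(n+2)}{2}$), so the repair is short---but as written the argument does not cover them. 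The paper's proof, incidentally, simply asserts $r(\la)=2L_1'$ for the standard-dominant $\la$ without justification, so it is equally terse on this point.
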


\begin{proof}
Let $k=\lfloor \fr{n}{2}\rfloor$. If $n$ is even (resp.\ odd) define $\mf{so}_n(\co)$ using the bilinear form  $B_k=\left(\begin{array}{cc}0&I_k\\I_k&0\end{array}\right)$ (resp.\ $\left(\begin{array}{cc}B_k&\\&1\end{array}\right)$). The diagonal matrices form a Cartan subalgebra $\mf h\sbs\mf{sl}_n(\co)$, and the restriction $\mf h_1:=\mf h\cap \mf{so}_n(\co)$ is a Cartan subalgebra of $\mf{so}_n(\co)$. Here $\mf h_1$ consists of diagonal matrices of the form $e_i-e_{i+k}$ for $i=1,\ld,k$. Define $L_1,\ld, L_n:\mf h\ra\co$ by 
\[\left(\begin{array}{ccc}x_1&&\\&\ddots&\\&&x_n\end{array}\right)\mapsto x_i.\]
The weight space $\mf h^*$ is the quotient of the free $\co$-vector space $\lan L_1,\ld,L_n\ran$ by the relation $L_1+\cd+L_n=0$. Upon restriction $r:\mf h^*\ra\mf h_1^*$, there are further relations: $r(L_i+L_{i+k})=0$ for $i=1,\ld,k$ and when $n$ is odd $r(L_n)=0$. Then $\mf h_1$ has a basis $L_1',\ld,L_k'$, where $L_i'=r(L_i)$. With this notation, it is elementary to show that the weights of the isotropy representation $\mf{so}_n(\co)\ra\End(\mf p)$ are 
\[\pm 2L_i', \>\>\>L_i'-L_j', \>\>\>\pm (L_i'+L_j'),\>\>\> 0\]
for $i\neq j$. The multiplicity of 0 is $k-1$, and all other weights have multiplicity 1. Without loss of generality, the highest weight is $2L_1'$ (this follows the convention in \cite{fh}). If this representation is the restriction of a (necessarily irreducible) representation $\mf{sl}_n(\co)\ra\End(V)$ with highest weight $\la$, then $r(\la)=2L_1'$. Then $\la$ has the form $\la=2L_1+u$ where $u$ is an integral element of $\ker r$. 

\vspace{.1in}

\noindent{\bf Case 1.} If $n$ is even, then 
\[\begin{array}{lll}
\la&=2\>L_1+a_1(L_1+L_{k+1})+\cd+a_k(L_k+L_{2k})\\[2mm]
&=2L_1+\sum_{i=1}^{k-1}a_i(L_i+L_{k+i})+a_k\>L_k-\sum_{i=1}^{2k-1}a_k\>L_i\\[2mm]
&=\big(2+a_1-a_k\big)L_1+\sum_{i=2}^k(a_i-a_k)L_i+\sum_{i=1}^{k-1}(a_i-a_k)L_{k+i}
\end{array}\]
In the second line above we have used the relation $\sum_{i=1}^n L_i=0$. Since $\la$ is a nonnegative sum of fundamental weights, the coefficient on $L_i$ is at least the coefficient on $L_{i+1}$. Then 
\[2+a_1-a_k\ge a_2-a_k\ge \cd\ge a_{k-1}-a_k\ge0\ge a_1-a_k\ge\cd\ge a_{k-1}-a_k.\]
In particular this implies that $a_i-a_k=0$ for $i=1,\ld,k-1$. Then in fact, 
\[\lambda=2\>L_1.\]
However, the representation of $\mf{sl}_n(\co)$ with highest weight $2L_1$ has dimension $\fr{n(n+1)}{2}$, which is equal to $\dim\mf p=\fr{(n-1)(n+2)}{2}$ for no values of $n$. This shows the extension does not exist when $n$ is even. 

\vspace{.1in}

\noindent{\bf Case 2.} If $n$ is odd, then 
\[\begin{array}{lll}
\la &= 2L_1+a_1(L_1+L_{k+1})+\cd+a_k(L_k+L_{2k})+a_{k+1}L_{2k+1}\\[2mm]
&=2L_1+\sum_{i=1}^ka_i\>L_i+\sum_{i=1}^ka_i\>L_{k+i}-\sum_{i=1}^{2k} a_{k+1}\>L_i\\[2mm]
&=(2+a_1-a_{k+1})L_1+\sum_{i=2}^k(a_i-a_{k+1})L_i+\sum_{i=1}^k(a_i-a_{k+1})L_{k+i}
\end{array}\]
Again, in the second line we used the relation $\sum_{i=1}^n L_i=0$. Similar to Case (1), this implies that 
\[2+a_1-a_{k+1}\ge a_2-a_{k+1}\ge \cd\ge a_k-a_{k+1}\ge a_1-a_{k+1}\ge\cd\ge a_k-a_{k+1}\ge0.\]
It follows that $a_1=a_2=\cd=a_k$ and $a_1\ge a_{k+1}$. Let $c=a_1-a_{k+1}$. Then 
\[\la=2L_1+c(L_1+\cd+L_{2k})\]
where $c\ge0$. Let $V$ be the the irreducible representation of $\mf{sl}_n(\co)$ with highest weight $\la=2L_1+c(L_1+\cd+L_{2k})$. We show $\dim V>\dim\mf p$. 
Note that 
\[\la'=\la+(L_2-L_1)=(c+1)L_1+(c+1)L_2+c(L_3+\cd+L_{2k})\]
is another weight of $V$. The Weyl group $\ca W\simeq S_n$ of $\mf h\sbs\mf{sl}_n(\co)$ acts on the weights, permuting the indices. The orbits of $\la$ and $\la'$ under $\ca W$ have size
\[|\ca W.\la|=\fr{n!}{(2k-1)!}=n(n-1)\]
and
\[|\ca W.\la'|=
n(n-1)(n-2)/2.\]
Since $\la$ and $\la'$ are in distinct $\ca W$ orbits, $|\ca W.\la|+|\ca W.\la'|$ is a lower bound on the dimension of $V$, and one can check that this lower bound is greater than the dimension of $\mf p$:
\[\dim V\ge |\ca W.\la|+|\ca W.\la'|>\dim\mf p.\]
This shows the extension $V$ cannot  exist. 
\end{proof}

\subsection{Extending the isotropy representation for $G=\SU^*_{2n}$} Here $K=\SP_n$. After complexifying we reduce to proving the following lemma.
\begin{lem}
\label{lem:noextendSP}
Fix $n\ge2$. Let $\mf g=\mf{sl}_{2n}(\co)$ and let $\mf k=\mf{sp}_{2n}(\co)$. Let $\mf g=\mf k\op\mf p$ be the decomposition of the adjoint representation of $\mf k$ on $\mf g$. Then $\iota:\mf k\ra\emph{\End}(\mf p)$ does not extend to a representation $\mf g\ra\emph{\End}(\mf p)$. 
\end{lem}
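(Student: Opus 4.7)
The plan is to mirror the strategy of Lemma \ref{lem:noextendSO}: work over a compatible pair of Cartan subalgebras, compute the weights of $\mf p$, pin down the only candidate highest weight of an extension using dominance, then rule it out by a dimension count.

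First I would fix the symplectic form $J = \left(\begin{smallmatrix} 0 & I_n \\ -I_n & 0\end{smallmatrix}\right)$ and realize $\mf{sp}_{2n}(\co) \subset \mf{sl}_{2n}(\co)$ accordingly, so that the Cartan subalgebra $\mf h \subset \mf{sl}_{2n}(\co)$ of diagonal matrices restricts to a Cartan $\mf h_1 := \mf h \cap \mf{sp}_{2n}(\co)$ consisting of matrices $\diag(x_1,\ldots,x_n,-x_1,\ldots,-x_n)$. With the usual functionals $L_1,\ldots,L_{2n} \in \mf h^*$, the restriction $r : \mf h^* \to \mf h_1^*$ satisfies $r(L_i) = L_i'$ and $r(L_{n+i}) = -L_i'$ for $1 \le i \le n$, where $L_1',\ldots,L_n'$ is the dual basis of $\mf h_1$. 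A direct computation with this restriction shows that the weights of $\mf p$ under $\mf h_1$ are $\pm(L_i' \pm L_j')$ for $i < j$, each with multiplicity $1$, together with the zero weight of multiplicity $n-1$. In particular $\mf p$ is the irreducible $\mf{sp}_{2n}(\co)$-module with highest weight $L_1' + L_2'$ and dimension $2n^2 - n - 1$ (isomorphic to $\Lambda^2_0\co^{2n}$, the traceless part of $\Lambda^2\co^{2n}$ under the symplectic form).

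Next, suppose for contradiction that $\iota$ extends to $\rho : \mf g \to \End(\mf p)$. As $\mf p$ is $\mf k$-irreducible, $\rho$ is $\mf g$-irreducible, so $\mf p = V_\la$ for some dominant integral $\la \in \mf h^*$. Writing $\la = \sum_{i=1}^{2n-1} d_i \omega_i$ in the fundamental weight basis and using the formula
\[
r(\omega_i) = \begin{cases} L_1' + \cdots + L_i' & 1 \le i \le n, \\ L_{i-n+1}' + \cdots + L_n' & n < i \le 2n-1,\end{cases}
\]
the requirement $r(\la) = L_1' + L_2'$ becomes an explicit system of linear equations in the $d_i$. Combined with $d_i \ge 0$, these force $d_2 = 1$ and $d_i = 0$ for $i \ne 2$; hence $\la = \omega_2$ is the only candidate.

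Finally, since $V_{\omega_2} = \Lambda^2 \co^{2n}$ has dimension $\binom{2n}{2} = 2n^2 - n$, we would have $\dim V_\la = \dim \mf p + 1$, a contradiction. (Geometrically, $\Lambda^2\co^{2n}$ does restrict to $\mf p \oplus \co$ as $\mf{sp}_{2n}(\co)$-modules, with the trivial summand coming from the symplectic form; the point is precisely that no irreducible $\mf{sl}_{2n}(\co)$-module restricts to $\mf p$ alone.) The main obstacle is the weight-theoretic step pinning down $\la = \omega_2$: once the formula for $r(\omega_i)$ is in hand, the rest is bookkeeping, but one has to treat the asymmetry between indices $\le n$ and $> n$ carefully so that the dominance inequalities eliminate all but $d_2$. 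Unlike Lemma \ref{lem:noextendSO}, no parity split is needed here, which actually makes the argument slightly shorter.
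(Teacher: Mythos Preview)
Your argument is essentially the paper's own proof: the same compatible Cartans $\mf h_1\subset\mf h$, the same weight list for $\mf p$ with highest weight $L_1'+L_2'$, and the same ``solve $r(\la)=L_1'+L_2'$ under dominance, then compare dimensions'' endgame; you parameterize in the fundamental-weight basis $\omega_i$ while the paper writes $\la=(L_1+L_2)+\sum a_i(L_i+L_{n+i})$ and manipulates the $L_i$-coefficients directly, but the two computations are equivalent and both land on $\la=L_1+L_2=\omega_2$. One small correction (which the paper's chain of inequalities also glosses over): for $n=2$ your system reads $d_1+d_2=1$, $d_2+d_3=1$, which admits the second solution $(d_1,d_2,d_3)=(1,0,1)$, i.e.\ $\la=\omega_1+\omega_3$ (the adjoint of $\mf{sl}_4$); since $\dim V_{\omega_1+\omega_3}=15\neq 5=\dim\mf p$, this extra candidate is dispatched by the same dimension check, so the proof survives.
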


\begin{proof}
Define $\mf{sp}_{2n}(\co)$ using the bilinear form $J_n=\left(\begin{array}{cc}0&I_n\\-I_n&0\end{array}\right)$. The diagonal matrices form a Cartan subalgebra $\mf h\sbs\mf{sl}_{2n}(\co)$, and the restriction $\mf h_1:=\mf h\cap \mf{sp}_{2n}(\co)$ is a Cartan subalgebra of $\mf{sp}_{2n}(\co)$. Here $\mf h_1$ consists of diagonal matrices of the form $e_i-e_{i+k}$ for $i=1,\ld,k$. For $i=1,\ld,2n$, define $L_i:\mf h\ra\co$ by 
\[\left(\begin{array}{ccc}x_1&&\\&\ddots&\\&&x_{2n}\end{array}\right)\mapsto x_i.\]
The weight space $\mf h^*$ is the quotient of the free $\co$-vector space $\lan L_1,\ld,L_{2n}\ran$ by the relation $L_1+\cd+L_{2n}=0$. Upon restriction $r:\mf h^*\ra\mf h_1^*$, there are further relations $r(L_i+L_{i+n})=0$ for $i=1,\ld,n$. Then $\mf h_1$ has a basis $L_1',\ld,L_n'$, where $L_i'=r(L_i)$. With this notation, it is elementary to show that the weights of the isotropy representation $\mf{sp}_{2n}(\co)\ra\End(\mf p)$ are 
\[\pm(L_i'-L_j'), \>\>\>\pm (L_i'+L_j'),\>\>\> 0\]
for $i< j$. The multiplicity of 0 is $n-1$, and all other weights have multiplicity 1. Without loss of generality, the highest weight is $L_1'+L_2'$ (this follows the convention in \cite{fh}). If this representation is the restriction of a (necessarily irreducible) representation $\mf{sl}_{2n}(\co)\ra\End(V)$ with highest weight $\la$, then $r(\la)=L_1'+L_2'$. Then $\la$ has the form $\la=2L_1+u$ where $u$ is an integral element of $\ker r$. To be precise, 
\[\begin{array}{lll}
\la&=(L_1+L_2)+a_1(L_1+L_{n+1})+\cd+a_n(L_n+L_{2n})\\[2mm]
&=L_1+L_2+\sum_{i=1}^na_i(L_i+L_{n+i})+a_nL_n-\sum_{i=1}^{2n-1}a_n\>L_i\\[2mm]
&=(1+a_1-a_n)L_1+(1+a_2-a_n)L_2+\sum_{i=3}^n(a_i-a_n)L_i+\sum_{i=1}^{n-1}(a_i-a_n)L_{n+i}
\end{array}\]
In the second line above we have used the relation $\sum_{i=1}^{2n} L_i=0$. Since $\la$ is a nonnegative sum of fundamental weights, the coefficient on $L_i$ is at least the coefficient on $L_{i+1}$. Then 
\[1+a_1-a_n\ge1+a_2-a_n\ge a_3-a_n\ge\cd\ge a_{n-1}-a_n\ge0\ge a_1-a_n\ge\cd\ge a_{n-1}-a_n.\]
It follows that $a_i-a_n=0$ for $i=1,\ld,n-1$. Then in fact
\[\la=L_1+L_2.\]
However, the representation of $\mf{sl}_{2n}(\co)$ with highest weight $L_1+L_2$ has dimension $2n(2n-1)$, which is equal to $\dim\mf p=(n-1)(2n+1)$ for no values of $n$. This shows the extension does not exist.
\end{proof}

\subsection{Extending the isotropy representation for $G=\SO_{n,1}$ and $G=E_{6(-26)}$} 

Let $G=\SO_{n,1}$. Upon complexification we are led to study the isotropy representation of $\mf k=\mf{so}_n(\co)\sbs\mf{so}_{n+1}(\co)=\mf g$. The isotropy representation has dimension $\dim\mf g-\dim\mf k=n$. On the other hand, the smallest nontrivial representation of $\mf g$ has dimension $n+1$. Then the isotropy representation of $\mf k$ does not extend to $\mf g$. 

Let $G=E_{6(-26)}$. Upon complexification, we are led to study the isotropy representation of $\mf k= \mf f_4(\co)\sbs\mf e_6(\co)=\mf g$. The isotropy representation has dimension $\dim \mf g-\dim\mf f_4=78-52=26$. On the other hand, the smallest dimension of a nontrivial representation of $\mf g$ is 27. Hence the isotropy representation of $\mf k$ does not extend to $\mf g$.

\subsection{The extension problem when $G$ is complex}\label{sec:extendcplx} Let $G$ be a complex simple Lie group with maximal compact subgroup $K$. In this case we argue as follows. Suppose for a contradiction that the isotropy representation $\iota:\mf k\ra\End(\mf p)$ extends to a representation $\rho:\mf g\ra\End(\mf p)$. After complexifying we have a representation $\rho_\co:\mf g\op\mf g\ra\End(\mf p_\co)$ that has the following key properties:
\begin{enumerate}
\item $\rho_\co$ is irreducible.
\item The restriction of $\rho_\co$ to the diagonal $\mf g\sbs\mf g\op\mf g$ is the adjoint representation $\ad:\mf g\ra\End(\mf g)$. 
\item The restriction of $\rho_\co$ to the real form $\mf g_\re\sbs\mf g\op\mf g$ has a real structure (see below).
\end{enumerate}
Since $\rho_\co$ is irreducible, $\mf p_\co$ is isomorphic (as a representation of $\mf g\op\mf g$) to a tensor product $V_1\ot V_2$, where $V_1$ and $V_2$ are irreducible representations of $\mf g$. Since $\rho_\co$ extends the adjoint representation of $\mf g$, we must have $\dim(V_1)\cdot\dim(V_2)=\dim\mf g$. Finally, since $\rho_\co\rest{}{\mf g_\re}$ has a real structure, neither $V_1$ nor $V_2$ is the trivial representation. By considering the possible representations $V_1,V_2$ we conclude that $\dim V_1\cdot\dim V_2>\dim\mf g$. This is a contradiction, so the extension $\rho:\mf g\ra\End(\mf p)$ does not exist. 

To elaborate on this argument we need the following terminology. For a more detailed treatment see \cite{reallie}. 

\vspace{.1in}

\noindent{\bf Real forms and real structures on a representation.} 
Let $V$ be a complex vector space. A \emph{real structure} on $V$ is an anti-linear involution $S:V\ra V$. For any real structure the fixed vectors
\[V^S=\{v\in V: S(v)=v\}\] form a real vector space. For example, complex conjugation is a real structure on $V=\co^n$. 

Let $\mf g$ be a complex Lie algebra with underlying real Lie algebra $\mf g_\re$. A representation $\rho:\mf g_\re\ra\End(V)$ induces an obvious representation $\ov\rho:\mf g_\re\ra\End(\ov V)$ on the conjugate vector space $\ov V$. If $\rho$ and $\ov\rho$ are isomorphic, then $\rho$ is called \emph{self-conjugate}. 

A \emph{compatible real structure} for $\rho:\mf g_\re\ra\End(V)$ is a real structure $S:V\ra V$ so that $S\circ\rho(x)=\rho(x)\circ S$ for every $x\in\mf g_\re$. A compatible real structure induces a real representation $\mf g_\re\ra\End(V^S)$. In addition, a compatible real structure defines an isomorphism between $\rho$ and $\ov\rho$. In other words, if the representation $\rho$ has a compatible real structure, then $\rho$ is self-conjugate.  

A real structure $S:\mf g\ra\mf g$ on a complex Lie algebra defines a real Lie algebra $\mf g_0:=\mf g^S$, which is called the \emph{real form} corresponding to $S$. 

We proceed to the statement of Proposition \ref{prop:selfconj}. For a complex Lie group $\mf g$, the underlying real Lie group $\mf g_\re$ is a real form of $\mf g\op\mf g$. Denote by $\aut(\Pi)$ the group of permutations of the \emph{fundamental weights} of $\mf g\op\mf g$ that preserve the \emph{Cartan matrix} of $\mf g$, and let $\nu\in\aut(\Pi)$ be the \emph{Weyl involution} associated to the real form $\mf g_\re\sbs\mf g\op\mf g$ (see \cite{reallie}). The following lemma gives a criterion to determine if a complex representation of $\mf g_\re$ has a real structure. This is a special case of Theorem 3 in Ch.\ 8 of \cite{reallie}.

\begin{prop}\label{prop:selfconj}
Let $\rho_1:\mf g\ra\emph{\aut}(V_1)$ and $\rho_2:\mf g\ra\emph{\aut}(V_2)$ be irreducible (complex) representations with highest weights $\Lambda_1,\Lambda_2$, respectively. Let $V=V_1\ot V_2$ and $\rho_1\ot\rho_2:\mf g\op\mf g\ra\emph{\aut}(V)$ be the induced representation. Then the restriction $\rho_1\ot\rho_2\rest{}{\mf g_\re}$ has a real structure if and only if $\nu(\Lambda_1)=\Lambda_2$.
\end{prop}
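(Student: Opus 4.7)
The plan is to reduce the existence of a real structure on $\rho_1 \otimes \rho_2 |_{\mf g_\re}$ to a concrete $\mf g$-equivariant identification between $V_1$ and a suitable conjugate of $V_2$, and then to build the real structure by hand. First I would unwind the identification $\mf g_\re \otimes_\re \co \cong \mf g \op \mf g$ via $X \otimes 1 + Y \otimes i \mapsto (X + iY,\, X - iY)$: under this iso, $\mf g_\re$ embeds as the diagonal $X \mapsto (X,X)$, and the complex conjugation fixing $\mf g_\re$ becomes the swap $\sigma(a,b) = (b,a)$. Consequently, the Weyl involution $\nu$ associated to this real form is identified with the involution on fundamental weights of $\mf g$ characterized by $\overline{V_\Lambda} \cong V_{\nu(\Lambda)}$ as $\mf g$-representations (concretely $\nu = -w_0$ with $w_0$ the longest element of the Weyl group).

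Next I would identify the complex-conjugate representation of $V_1 \otimes V_2$ as a $\mf g \op \mf g$-module. Because complex conjugation on $\mf g \op \mf g$ acts by the swap $\sigma$, a short direct check using the intertwiner formulas gives
\[
\overline{V_1 \otimes V_2} \;\cong\; \overline{V_2} \otimes \overline{V_1}
\]
as $\mf g \op \mf g$-representations, where now the first factor of $\mf g \op \mf g$ acts on $\overline{V_2}$ and the second on $\overline{V_1}$. By uniqueness of the tensor factorization of irreducibles of $\mf g \op \mf g$, self-conjugacy of $(\rho_1 \otimes \rho_2)|_{\mf g_\re}$ is equivalent to the pair of $\mf g$-isomorphisms $V_1 \cong \overline{V_2}$ and $V_2 \cong \overline{V_1}$, which (using the previous paragraph) translate into the single condition $\nu(\Lambda_1) = \Lambda_2$.

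Finally I would promote self-conjugacy to a genuine real structure. Given the isomorphism $V_1 \cong \overline{V_2}$, realize it by an anti-linear bijection $\varphi : V_1 \to V_2$ intertwining the two actions, and set
\[
S : V_1 \otimes V_2 \to V_1 \otimes V_2, \qquad S(v_1 \otimes v_2) = \varphi^{-1}(v_2) \otimes \varphi(v_1).
\]
One checks directly that $S$ is anti-linear, that $S \circ S = \id$ (the $\varphi$ and $\varphi^{-1}$ cancel on each tensor factor), and that $S$ commutes with the diagonal $\mf g_\re$-action by the intertwining property of $\varphi$ combined with the Leibniz rule. This makes $S$ the required real structure; conversely, any real structure forces self-conjugacy and hence $\nu(\Lambda_1) = \Lambda_2$ by the previous step.

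The main obstacle is the careful bookkeeping of the swap: it is precisely the fact that complex conjugation on $\mf g \op \mf g$ swaps the two factors that both produces the off-diagonal criterion $\nu(\Lambda_1) = \Lambda_2$ (rather than the naive $\nu(\Lambda_i) = \Lambda_i$ one might first guess by applying the criterion factor-by-factor) and is also what makes the candidate involution $S$ square to $+\id$ rather than $-\id$. That is, the swap structure is exactly what rules out the quaternionic case and ensures that self-conjugate $\mf g_\re$-representations of the tensor form $V_1 \otimes V_2$ are always of real type.
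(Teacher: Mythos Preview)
The paper does not give its own proof of this proposition; it simply cites it as a special case of Theorem~3 in Chapter~8 of \cite{reallie}. Your proposal therefore supplies strictly more than the paper does, and the argument you outline is essentially correct: the identification $(\mf g_\re)_\co\cong\mf g\oplus\mf g$ with conjugation acting by swap, the reduction of self-conjugacy to a matching of highest weights across the two factors, and the explicit construction of $S(v_1\otimes v_2)=\varphi^{-1}(v_2)\otimes\varphi(v_1)$ with $S^2=\id$ are all sound. The observation that the swap forces the self-conjugate representation to be of real (rather than quaternionic) type is exactly the point.

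One caution on conventions: the paper's $\nu$ is defined as the Weyl involution on the fundamental weights of $\mf g\oplus\mf g$ attached to the real form $\mf g_\re$, and then the condition ``$\nu(\Lambda_1)=\Lambda_2$'' is shorthand for how that involution pairs the two copies of the weight lattice. Your identification of $\nu$ with $-w_0$ on $\mf g$-weights depends on how one fixes the isomorphism $\ov{\mf g}\cong\mf g$; under other natural choices the induced involution on $\mf g$-weights is the identity (pure swap). This does not affect correctness---both interpretations yield a bijection on dominant weights, and the only consequence the paper draws from the proposition is that $\Lambda_1=0$ iff $\Lambda_2=0$---but you should either pin down the convention or phrase the conclusion in a convention-free way. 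Also, when verifying that $S$ commutes with the $\mf g_\re$-action, be explicit that ``$\varphi$ intertwines the two actions'' means $\varphi(Xv_1)=X\varphi(v_1)$ for $X$ in the diagonal copy of $\mf g_\re$; this is a real-linear (not complex-linear) condition, consistent with $\varphi$ being anti-linear.
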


\noindent We are finally ready to address the extension problem.
\begin{prop}
Let $G$ be a simple, complex Lie group. 
Let $K\sbs G$ be a maximal compact subgroup. Then the isotropy representation $\iota: \mf k\ra\emph{\End}(\mf p)$ does not extend to a representation $\mf g_\re\ra\emph{\End}(\mf p)$. 
\end{prop}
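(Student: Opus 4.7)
The plan is to follow the strategy outlined in Section~\ref{sec:extendcplx}: assume for contradiction that the isotropy representation $\iota:\mf k\to\End(\mf p)$ extends to a real representation $\rho:\mf g_\re\to\End(\mf p)$, complexify, and rule out the resulting complex representation on dimensional grounds. Since $G$ is complex simple, I would write $\mf g=\mathrm{Lie}(G)$ as a complex Lie algebra; its realification $\mf g_\re$ then has complexification $(\mf g_\re)_\co\cong \mf g\op\overline{\mf g}\cong \mf g\op\mf g$, with $\mf k_\co$ embedded as the diagonal copy of $\mf g$ and $\mf p_\co$ as the anti-diagonal. Extending $\rho$ by $\co$-linearity yields a representation $\rho_\co:\mf g\op\mf g\to\End(\mf p_\co)$ whose restriction to the diagonal is the adjoint representation $\ad:\mf g\to\End(\mf g)$, since $\mf p_\co\cong\mf g$ as $\mf k_\co$-modules.

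Next I would verify the three key properties asserted in the excerpt. Irreducibility of $\rho_\co$ follows from simplicity of $\mf g$: the adjoint representation is irreducible, so $\iota_\co$ is irreducible, and hence so is $\rho_\co$. By the standard tensor decomposition of irreducibles of a direct sum of simple Lie algebras, $\mf p_\co\cong V_1\ot V_2$ as $\mf g\op\mf g$-modules, with $V_i$ irreducible of highest weight $\Lambda_i$. Comparing dimensions gives $\dim V_1\cdot\dim V_2=\dim_\co\mf p_\co=\dim_\re\mf p=\dim_\co\mf g$. Finally, complex conjugation on $\mf p_\co$ is a real structure compatible with $\rho_\co|_{\mf g_\re}$, so by Proposition~\ref{prop:selfconj} one has $\nu(\Lambda_1)=\Lambda_2$; since $\nu(0)=0$, if either $V_i$ were trivial then so would the other, forcing $\dim\mf p_\co=1$, which is absurd. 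Hence both $V_1$ and $V_2$ are nontrivial irreducible representations of $\mf g$.

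To close the argument I would perform a case check using the classification of complex simple Lie algebras. Let $d(\mf g)$ denote the minimal dimension of a nontrivial irreducible representation of $\mf g$. The requisite values are $d(\mf{sl}_n)=n$ with $\dim=n^2-1$; $d(\mf{so}_n)=n$ for $n\ge 7$ with $\dim=n(n-1)/2$ (the small orthogonal algebras are handled via the accidental isomorphisms $\mf{so}_3\cong\mf{sl}_2$, $\mf{so}_5\cong\mf{sp}_4$, $\mf{so}_6\cong\mf{sl}_4$); $d(\mf{sp}_{2n})=2n$ with $\dim=n(2n+1)$; and for exceptionals $d(G_2)=7$, $d(F_4)=26$, $d(E_6)=27$, $d(E_7)=56$, $d(E_8)=248$. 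In every case $d(\mf g)^2>\dim_\co\mf g$ is immediate. Therefore $\dim V_1\cdot\dim V_2\ge d(\mf g)^2>\dim_\co\mf g$, contradicting the dimension equality from the previous paragraph and establishing the proposition.

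The main obstacle is the final case-by-case check, but it is routine given the known tabulation of minimal-dimensional representations. The conceptual content lies in the first two paragraphs: identifying $(\mf g_\re)_\co\cong\mf g\op\mf g$, observing that the extension problem forces an irreducible $\mf g\op\mf g$-module of dimension exactly $\dim\mf g$ with nontrivial tensor factors, and invoking Proposition~\ref{prop:selfconj} to rule out the degenerate case where one factor is trivial.
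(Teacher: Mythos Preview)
Your proposal is correct and follows essentially the same approach as the paper's proof: complexify to obtain an irreducible $\mf g\op\mf g$-module $V_1\ot V_2$ of dimension $\dim_\co\mf g$, use Proposition~\ref{prop:selfconj} to conclude both factors are nontrivial, and finish with the case check that $d(\mf g)^2>\dim_\co\mf g$ for every complex simple $\mf g$. The only cosmetic differences are that you spell out the accidental isomorphisms for low-rank orthogonal algebras and give a slightly more explicit reason why both $V_i$ cannot be trivial.
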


\begin{proof}
Suppose there exists a representation $\rho:\mf g_\re\ra\End(\mf p)$ so that $\rho\rest{}{\mf k}=\iota$. We complexify this situation. Observe that $\mf k_\co\simeq\mf g$ and the complexification of the isotropy representation $\iota_\co:\mf k_\co\ra\End(\mf p_\co)$ is isomorphic to the adjoint representation $\ad:\mf g\ra\End(\mf g)$. Furthermore, $(\mf g_\re)_\co\simeq\mf g\op\mf g$, and the complexification of the inclusion $\mf k\hra\mf g_\re$ is the diagonal map $\De:\mf g\ra\mf g\op\mf g$. Then if the extension $\rho$ exists, there exists a extension $\rho_\co$ making the following diagram commute.
\begin{equation}\label{diag:ext1}
\begin{gathered}
\begin{xy}
(0,0)*+{\mf g}="A";
(0,-15)*+{\mf g\op\mf g}="B";
(30,0)*+{\End(\mf p_\co)}="C";
{\ar"A";"B"}?*!/^3mm/{\De};
{\ar@{-->} "B";"C"}?*!/^3mm/{\rho_\co};
{\ar "A";"C"}?*!/_3mm/{\ad};
\end{xy}
\end{gathered}
\end{equation}
(It is worth noting that there \emph{is} an obvious representation $\rho_\co$ that makes Diagram \ref{diag:ext1} commute. Let $p_1:\mf g\op\mf g\ra\mf g$ be projection to the first factor, and let $\rho_\co=\ad\circ p_1$. 
However, the representation $\ad\circ p_1$ cannot be the complexification of $\rho$ because it does not restrict in the correct way to $\mf g_\re\sbs\mf g\op\mf g$.)

Note that $\rho_\co$ must be irreducible because  $\ad$ is irreducible. Then $\mf p_\co$ is isomorphic (as a representation of $\mf g\op\mf g$) to $V_1\ot V_2$ for two irreducible representations $V_1,V_2$ of $\mf g$. Furthermore,
\begin{equation*}\label{eqn:dim}\dim(V_1)\cdot\dim(V_2)=\dim(V_1\ot V_2)=\dim\mf p_\co=\dim\mf g.\end{equation*}

Let $\Lambda_i\in\mf h^*$ be the highest weight of $V_i$. Then $(\Lambda_1,\Lambda_2)\in\mf h^*\op\mf h^*$ is the highest weight of $V_1\ot V_2$. The fact that $\rho_\co$ is the complexification of a solution $\rho$ to the real version of the extension problem implies that the restriction of $\rho_\co$ to $\mf g_\re\sbs\mf g\op\mf g$ has a compatible real structure. By Proposition \ref{prop:selfconj}, this implies that $\Lambda_2=\nu(\Lambda_1)$. Then either $V_1$ and $V_2$ are both trivial or both nontrivial. Clearly they cannot both be trivial, so they are nontrivial. Let $d$ denote the smallest dimension of a nontrivial representation of $\mf g$. Then 
\[\dim(V_1)\cdot\dim(V_2)\ge d^2.\] On the other hand, one checks that $\dim \mf g<d^2$ in each case (see table below). This contradiction implies that the extension $\rho:\mf g\ra\End(\mf p)$ cannot exist. 
\end{proof}

\begin{center} 
\begin{tabular}{c|c|ccc}
$\mf g$&$\dim\mf g$&$d$\\[1mm]
\hline &&\\ [-1.5ex]
$\mf{sl}_n(\co)$&$n^2-1$&$n$\\[2mm]
$\mf{sp}_{2n}(\co)$&$n(2n+1)$&$2n$\\[2mm]
$\mf{so}_n(\co)$&$n(n-1)/2$&$n$\\[2mm]
$\mf g_2(\co)$&14&7\\[2mm]
$\mf f_4(\co)$&52&26\\[2mm]
$\mf e_6(\co)$&78&27\\[2mm]
$\mf e_7(\co)$&133&56\\[2mm]
$\mf e_8(\co)$&248&248
\end{tabular}
\end{center}




\section{Point-pushing and the Zimmer program}\label{sec:zimmer}

Let $G$ be a semisimple real Lie group without compact factors and $\re$-rank $\ge2$, and let $\Ga\sbs G$ be a lattice. The linear actions of $\Ga$ are essentially classified by the Margulis Superrigidity Theorem \ref{thm:margulis}. It follows from the classification that there is a smallest dimension of a nontrivial linear representation of $\Ga$. 
The Zimmer program is concerned with classifying smooth actions of $\Ga$ on manifolds. Zimmer's conjecture states that there is a smallest dimension of a manifold $M$ on which $\Ga$ acts, and this dimension is computed explicitly from $G$ (see \cite{fisher}). 

In Theorem \ref{thm:ppzimmer} below we illustrate the Zimmer conjecture in a special case. This example was shown to the author by S.\ Weinberger. Take $\Ga\sbs G$ as above, and let $d$ be the smallest dimension of a nontrivial linear representation of $G$ (if $G$ is simple, this is the dimension of the standard representation). Let $M$ be any 4-manifold with $\pi_1(M)=\Ga$; this can be done because $\Ga$ is finitely presented (see \cite{witte-morris}). 

\begin{thm}\label{thm:ppzimmer}
Let $G$, $\Ga$, $M$, and $d$ be as above. If $d\ge 5$ then \emph{Push} is not realized by diffeomorphisms. 
\end{thm}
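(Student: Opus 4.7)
The plan is to adapt the strategy used in Theorem \ref{thm:main} and reduce the realization problem to a statement about low-dimensional representations of $G$. Assume for contradiction that $\vp:\Ga\ra\diff(M,*)$ is a lift of Push. Composing $\vp$ with the canonical section $\si:\diff(M,*)\ra\diff(\wtil M,\wtil *)^{\pi_1(M)}$ of Section \ref{sec:genproc} produces an action of $\Ga$ on $\wtil M$ fixing a basepoint $\wtil *$, and differentiating at $\wtil *$ yields a linear representation $\rho_0:\Ga\ra\gl_4(\re)$. The construction requires only that $M$ is a smooth $4$-manifold.

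Next I would apply Margulis superrigidity (Theorem \ref{thm:margulis}) to $\rho_0$. After passing to a finite index subgroup of $\Ga$ and, if necessary, a finite cover of $G$ (as in Section \ref{sec:res}), either the Zariski closure of $\rho_0(\Ga)$ contains a compact factor, or $\rho_0$ extends to a continuous homomorphism $\what\rho_0:G\ra\gl_4(\re)$. Because $d\ge 5>4$, every continuous linear representation of $G$ of dimension at most $4$ is trivial, so in the extension case $\rho_0$ is trivial on a finite index subgroup $\Lambda\sbs\Ga$. In the remaining case, projecting $\rho_0$ onto the noncompact part of the Zariski closure and applying the same reasoning shows this projection is trivial, so $\rho_0(\Lambda)$ is contained in a compact subgroup of $\gl_4(\re)$.

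The final step is to derive a contradiction from the virtual precompactness of $\rho_0$, valid for every $4$-manifold $M$ with $\pi_1(M)=\Ga$. Precompactness of $\rho_0(\Lambda)$ means the flat $\gl_4(\re)$-bundle over $M$ (or a finite cover) built from $\rho_0$ admits an orthogonal reduction and hence has vanishing rational Pontryagin classes by Corollary \ref{cor:flatccs}; this already rules out those $M$ for which this flat bundle is forced to agree with $TM$. To rule out realization on arbitrary $M$, the plan is to invoke Zimmer-style rigidity: the lifted action of $\Lambda$ on $\wtil M$ fixes $\wtil *$ with virtually trivial linearization there, and for a higher rank lattice this should propagate to $\vp(\Lambda)$ being finite, contradicting the injectivity of Push on $\Lambda$ (which holds after a further finite index reduction, because the kernel of Push lies in the finite center of $\Ga$ via the sequence $\pi_1\diff(M)\ra\pi_1(M)\ra\pi_0\diff(M,*)$).

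The hardest step is this last propagation from tangential triviality at a single fixed point to triviality of the full smooth action of $\Lambda$; this is precisely the kind of input that the Zimmer program is designed to supply, so the proof ultimately rests on importing a rigidity result from that program rather than being self-contained in the characteristic-class framework used for the earlier theorems.
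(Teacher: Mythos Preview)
Your outline follows the right shape---differentiate at the fixed point, constrain the resulting $4$-dimensional representation via superrigidity, then propagate tangential triviality to triviality of the action---but the last step is not a vague ``Zimmer-style'' input. It is the Thurston stability theorem: if a finitely generated group $\Lambda$ with $H^1(\Lambda;\re)=0$ acts by $C^1$ diffeomorphisms on a connected manifold with a global fixed point at which every element acts by the \emph{identity} on the tangent space, then the action is globally trivial. Higher-rank lattices (and their finite-index subgroups) satisfy $H^1=0$, so this applies directly. Once $\Lambda$ acts trivially, $\Lambda\sbs\ker(\text{Push})$, which lies in the finite center of $\Ga$, giving the contradiction you indicated.

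Because Thurston stability needs the derivative to be the identity, your conclusion that $\rho_0(\Lambda)$ is merely precompact is not enough: you must show $\rho_0(\Ga)$ is \emph{finite}, so that $\Lambda=\ker\rho_0$ has finite index. The paper closes this gap using the restriction-of-scalars description in Section~\ref{sec:res}: the Zariski closure of any representation of $\Ga$ has identity component a product of the groups $H^\si$, and this holds for the compact factors as well as the noncompact ones. Each $H^\si$ has smallest nontrivial representation of dimension $d\ge5$, so none can sit inside $\gl_4(\re)$; hence the Zariski closure of $\rho_0(\Ga)$ is finite. Your case split (extend the noncompact part, leave the compact part alone) stops short of this and leaves you with a possibly infinite compact image, which Thurston stability cannot handle.
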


Theorem \ref{thm:ppzimmer} is an easy consequence of the Margulis Superrigidity Theorem \ref{thm:margulis} and the Thurston Stability Theorem \ref{thm:thurstonstability}.
\begin{thm}[Thurston stability \cite{thurston_stability}]\label{thm:thurstonstability}
Let $\Lambda$ be a finitely generated group with $H^1(\Lambda;\re)=0$. Let $M$ be a connected manifold. Assume that $\Lambda$ acts on $M$ by $C^1$ diffeomorphisms. If there exists a global fixed point $*\in M$ and each $\la\in \Lambda$ acts trivially on $T_*M$, then $\Lambda$ acts trivially on $M$. 
\end{thm}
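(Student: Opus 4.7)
The plan is a proof by contradiction that combines the dimension inequality $\dim M = 4 < 5 \le d$ with Margulis superrigidity (Theorem~\ref{thm:margulis}) and Thurston's stability theorem (Theorem~\ref{thm:thurstonstability}). The point is that the hypothesis $d\ge 5$ lets us kill the tangent-space representation at the fixed point, and Thurston stability then propagates this triviality to the whole action, contradicting the near-injectivity of Push.

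Assume for contradiction that $\vp \colon \Ga \to \diff(M,*)$ realizes Push. Differentiating at the fixed basepoint produces a linear representation
\[
\rho_0 \colon \Ga \longrightarrow GL(T_*M) \cong \gl_4(\re).
\]
The essential observation is that, since $d\ge 5$, \emph{every} continuous homomorphism $G \to \gl_4(\re)$ is trivial. I would then apply Margulis superrigidity to $\rho_0$: after passing to a finite index subgroup $\Ga' \sbs \Ga$ (and possibly a finite cover of $G$, which is harmless as explained in Section~\ref{sec:res}), the semisimple non-compact part of the Zariski closure of $\rho_0(\Ga')$ comes from a continuous extension to $G$, and by the preceding observation this extension is trivial. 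Handling the remaining compact part of the image by the standard consequences of superrigidity for higher rank arithmetic lattices, I conclude that after passing to a further finite index subgroup (still called $\Ga'$), the representation $\rho_0|_{\Ga'}$ is \emph{trivial}.

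Next I would invoke Thurston stability applied to $\vp|_{\Ga'}$. The hypotheses hold: $\Ga'$ is a finite index, hence irreducible, higher rank lattice in $G$, so by the Margulis normal subgroup theorem its abelianization is finite, giving $H^1(\Ga';\re) = 0$; the action $\vp|_{\Ga'}$ fixes $*$ and, by construction, acts trivially on $T_*M$. Theorem~\ref{thm:thurstonstability} then forces $\Ga'$ to act trivially on all of $M$, i.e.\ $\vp|_{\Ga'}$ is the trivial homomorphism into $\diff(M,*)$, and in particular $\text{Push}|_{\Ga'}$ is trivial.

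To close the contradiction I would appeal to the discussion in Section~\ref{sec:pushing}: the kernel of Push is contained in the center of $\pi_1(M)=\Ga$, which is finite (again by Margulis's normal subgroup theorem applied to the center, or simply because the center of a semisimple Lie group with no compact factors gives a finite center in its irreducible lattices). Since $\Ga'$ has finite index in $\Ga$, it is infinite, so $\text{Push}|_{\Ga'}$ cannot be trivial. I expect the main obstacle to be the superrigidity step: the dimension inequality $d\ge 5$ immediately trivializes any extension of $\rho_0$ to $G$, but some care is required to argue that the possibly non-trivial compact part of the Zariski closure of $\rho_0(\Ga)$ can also be killed on a finite index subgroup, rather than only conjugated into $O(4)$.
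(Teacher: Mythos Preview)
Your proposal is not a proof of the stated theorem. The statement you were asked to prove is Thurston's stability theorem (Theorem~\ref{thm:thurstonstability}), but what you have written is a proof of Theorem~\ref{thm:ppzimmer}, which \emph{uses} Thurston stability as a black box. The paper itself does not prove Theorem~\ref{thm:thurstonstability}; it is quoted from \cite{thurston_stability} as a known result, so there is no ``paper's own proof'' to compare against for that statement.

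If the intended target was actually Theorem~\ref{thm:ppzimmer}, then your argument is correct and follows essentially the same line as the paper: assume a realization $\vp$, look at the derivative representation $\rho_0:\Ga\to\gl_4(\re)$, use superrigidity and the bound $d\ge 5$ to force $\rho_0$ to have finite image, pass to the finite-index kernel $\Ga'$, apply Thurston stability to conclude $\Ga'$ acts trivially, and contradict the fact that $\ker(\text{Push})$ lies in the (finite) center of $\Ga$. The only minor difference is bookkeeping in the superrigidity step: the paper handles the compact part by invoking the restriction-of-scalars description from Section~\ref{sec:res} (the Zariski closure of $\rho_0(\Ga)$ is either finite or contains some $H^\si$, and no $H^\si$ fits inside $\gl_4(\re)$ when $d\ge5$), whereas you phrase it as killing the compact piece on a further finite-index subgroup. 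Both reach the same conclusion that $\rho_0$ is virtually trivial, and your worry about this step is unwarranted in the paper's setup.
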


Note that if $\Ga$ is a lattice in a semisimple real Lie group, then $\Ga$ is finitely generated (see \cite{witte-morris}). In addition, if $G$ has rank at least $2$, then $\Ga$ has no continuous map to $\re$ (the image of any homomorphism $\Ga\ra\gl_n(\re)$ has semisimple Zariski closure). Then $\Ga$ satisfies the hypotheses of Theorem \ref{thm:thurstonstability}. 

\begin{proof}[Proof of Theorem \ref{thm:ppzimmer}]
Suppose, for a contradiction, that Push is realized $\vp:\Ga\ra\diff(M,*)$. Then $\Ga$ acts on the tangent space at the fixed point
\[\al:\Ga\ra\aut(T_*M)\simeq \gl_4\re.\]
As discussed in Section \ref{sec:res}, the Zariski closure of $\al(\Ga)$ is either finite or is a product of factors in $\prod H^\si$. Note that the smallest nontrivial representation of $H^\si$ has dimension $d\ge 5$, so $H^\si$ cannot be contained in the Zariski closure of $\al(\Ga)$. Then $\al(\Ga)$ must be finite, and $\Ga'=\ker(\al)$ is finite index in $\Ga$. By definition $\Ga'$ acts trivially on $T_*(M)$, so $\Ga'$ acts trivially on $M$ by Theorem \ref{thm:thurstonstability}. Since $\vp$ is a lift of $\text{Push}$, this implies that $\Ga'$ is in the kernel of $\text{Push}:\Ga\ra\pi_0(\diff(M,*))$. On the other hand the kernel of Push is contained in the center of $\Ga$, which is finite. This is a contradiction, so $\text{Push}$ is not realized by diffeomorphisms. 
\end{proof}

\bibliographystyle{plain}
\bibliography{tpbib}

\end{document}